\documentclass[nosumlimits,twoside]{amsart}
\usepackage{amsfonts, amsmath, amssymb}
\usepackage{graphicx, subcaption}
\usepackage{float}
\usepackage{srcltx}
\usepackage[all]{xy}
\usepackage{bbm}
\usepackage{version}
\usepackage[T1]{fontenc}
\usepackage{xcolor}
\usepackage{enumerate}
\usepackage[normalem]{ulem}
\usepackage{bm}
\usepackage{latexsym}
\usepackage{stmaryrd}
\usepackage[2emode]{psfrag}
\usepackage{yhmath}
\usepackage{array}
\usepackage{dsfont}
\usepackage{mathrsfs}
\usepackage{tikz-cd}
\usepackage{comment}
\usetikzlibrary { decorations.pathmorphing, decorations.pathreplacing, decorations.shapes }


\newtheoremstyle{mytheorem}%
{10.0pt plus 2.0pt minus 2.0pt} 
{10.0pt plus 2.0pt minus 2.0pt} 
{\itshape} 
{} 
{\bfseries} 
{.} 
{ } 
{} 

\newtheoremstyle{mydefinition}%
{10.0pt plus 2.0pt minus 2.0pt} 
{10.0pt plus 2.0pt minus 2.0pt} 
{} 
{} 
{\bfseries} 
{.} 
{ } 
{} 

\newtheoremstyle{myremark}%
{10.0pt plus 2.0pt minus 2.0pt} 
{10.0pt plus 2.0pt minus 2.0pt} 
{} 
{} 
{\itshape} 
{.} 
{ } 
{} 

\theoremstyle{mytheorem}
\newtheorem{theorem}{Theorem}[section]
\newtheorem{lemma}[theorem]{Lemma}

\newtheorem{corollary}[theorem]{Corollary}
\newtheorem{proposition}[theorem]{Proposition} 

\theoremstyle{myremark}
\newtheorem{remark}[theorem]{Remark}

\theoremstyle{mydefinition}
\newtheorem{definition}[theorem]{Definition}

\newtheoremstyle{myzusatz}
 {10.0pt plus 2.0pt minus 2.0pt} 
{10.0pt plus 2.0pt minus 2.0pt} 
{\itshape} 
{} 
{\bfseries} 
{.} 
{ } 
{\thmname{#1}\thmnumber{ #2}\thmnote{ #3}}

\theoremstyle{myzusatz}

\definecolor{gray1}{gray}{0.8}
\definecolor{gray2}{gray}{0.6}
\definecolor{gray3}{gray}{0.4}
\definecolor{gray4}{gray}{0.2}

\allowdisplaybreaks
\excludeversion{invisible?}
\excludeversion{proof?}

\DeclareMathOperator{\rbiprod}{{\cdot\kern-.33em\triangleright\kern-.43em<}}
\def\lbiprod{{>\!\!\!\triangleleft\kern-.33em\cdot}}

\newcommand{\Cc}{\mathcal{C}}

\newcommand{\ot}{\otimes}

\renewcommand{\iff}{\Leftrightarrow}

\newcommand{\HBR}{\mathsf{HBR}}
\newcommand{\CHBR}{\mathsf{HBR}_{\mathrm{coc}}}

{
\left\{\begin{aligned}}
{\end{aligned}
\right.
}

\begin{document}
\title{Hopf braces and semi-abelian categories}
\author{Marino Gran, Andrea Sciandra}
\address{%
\parbox[b]{0.9\linewidth}{Institut de Recherche en Mathématique et Physique, Université Catholique de Louvain, Chemin du Cyclotron 2, B-1348 Louvain-la-Neuve, Belgium.}}
 \email{marino.gran@uclouvain.be}
\address{%
\parbox[b]{0.9\linewidth}{University of Turin, Department of Mathematics ``G.\@ Peano'', via
 Carlo Alberto 10, 10123 Torino, Italy.}}
 \email{andrea.sciandra@unito.it}
 \keywords{Hopf braces, semi-abelian categories, semi-direct products, torsion theory, commutators}
\subjclass[2020]{Primary 18E13, 16T05; Secondary 18G50, 18E35, 18E40, 16T25}
\begin{abstract}
Hopf braces have been introduced as a Hopf-theoretic generalization of skew braces. Under the assumption of cocommutativity, these algebraic structures are equivalent to matched pairs of actions on Hopf algebras, that can be used to produce solutions of the quantum Yang--Baxter equation. We prove that the category of cocommutative Hopf braces is semi-abelian and strongly protomodular. In particular, this implies that the main homological lemmas known for groups, Lie algebras and other classical algebraic structures also hold for cocommutative Hopf braces. Abelian objects are commutative and cocommutative Hopf algebras, that form an abelian Birkhoff subcategory of the category of cocommutative Hopf braces. Moreover, we show that the full subcategories of ``primitive Hopf braces'' and of ``skew braces'' form an hereditary torsion theory in the category of cocommutative Hopf braces, and that ``skew braces'' are also a Birkhoff subcategory and a localization of the latter category. Finally, we describe central extensions and commutators for cocommutative Hopf braces. 
\end{abstract}

\maketitle

\tableofcontents

\section*{Introduction}
Hopf braces were introduced by  Angiono, Galindo and Vendramin in \cite{AGV}, providing a Hopf-theoretic generalization of skew (left) braces \cite{GV}: they consist of two Hopf algebra structures on the same coalgebra, satisfying a compatibility condition that can be seen as a linearization of the one between the two group structures in a skew brace. One can describe Hopf braces in any braided monoidal category and these form a category that is equivalent to the one of bijective 1-cocycles \cite{FGRR}. Moreover, under the assumption of cocommutativity, Hopf braces are equivalent to matched pairs of actions on Hopf algebras, that can be used to produce solutions of the quantum Yang--Baxter equation \cite{AGV}. 

Skew braces are known to form a semi-abelian \cite{JMT} variety of universal algebras, and many interesting exactness properties of this category have been recently established in \cite{BFP, Bourn-Skew, FP, MLV}. 

The main goal of this paper is to investigate the categorical properties of the category $\CHBR$ of cocommutative Hopf braces, and to study some natural constructions, such as the semi-direct product decomposition and the commutator of normal subobjects, in this context. It turns out that $\CHBR$ has indeed many beautiful properties in common with some classical algebraic structures, from which also many of the known results concerning the categories of skew braces and cocommutative Hopf algebras can be easily deduced. Our work lays the foundations for many possible applications of categorical methods to further investigate Hopf braces, their (co)homology, as well as their relationship with skew braces.

After recalling the main definitions and some useful descriptions of limits \cite{Ag}, we observe that the category $\CHBR$ of cocommutative Hopf braces is protomodular (Proposition \ref{prop:protomodularity}). This is an important property that is shared with many algebraic categories, such as groups, loops, topological groups, Lie algebras, cocommutative Hopf algebras and skew braces, among others. In our context protomodularity \cite{Bourn-90} can be simply expressed as the validity of the \emph{Split Short Five Lemma}, well known in homological algebra. Moreover, we give a semi-direct product decomposition of cocommutative Hopf braces (Proposition \ref{thm:points}). Then, we prove that any morphism in the category $\CHBR$ has a factorization as a normal epimorphism (= a cokernel of some morphism) followed by a monomorphism. 
This factorisation is a key ingredient in the proof that the category $\CHBR$
is homological (Theorem \ref{Homological}), so that this latter has a pullback stable (normal epimorphism)-monomorphism factorisation system.
After giving a precise description of normal monomorphisms (=kernels) in $\CHBR$ (Proposition \ref{prop:normalmono}), we proceed to establish one of the main results of this work, namely that the category $\CHBR$ is semi-abelian (Theorem \ref{thm:semi-ab}). This implies that all the classical homological lemmas, such as the Noether Isomorphism Theorems, the ``Snake Lemma'' or the ``$3 \times 3$-Lemma'', hold true in 
$\CHBR$. Since the category of cocommutative Hopf braces is equivalent to the one of matched pairs of actions on cocommutative Hopf algebras \cite{AGV}, also the latter is a semi-abelian category.

Next, we consider abelian objects in $\CHBR$, that turn out to be the cocommutative Hopf braces such that the two algebra operations $\cdot $ and $\bullet$ are both commutative, and coincide (Proposition \ref{abelian}). The subcategory of abelian objects, which is  isomorphic to the category of commutative and cocommutative Hopf algebras, is abelian, and it is also a Birkhoff subcategory of $\CHBR$, i.e.  a full reflective subcategory that is also closed under subobjects and regular quotients.

When the base field $\Bbbk$ is algebraically closed of characteristic 0, we consider the full replete subcategory $\mathsf{SKB}$  of $\CHBR$ whose objects are the cocommutative Hopf braces having the property that the underlying Hopf algebras are group Hopf algebras. The notation $\mathsf{SKB}$ is justified by the fact that the category $\mathsf{SKB}$ is indeed equivalent to the one of skew braces. It turns out that $\mathsf{SKB}$ is the torsion-free subcategory of a hereditary torsion theory 
$ (\mathsf{PHBR}_{\mathrm{coc}},
 \mathsf{SKB})$ in $\CHBR$ (Theorem \ref{hereditary}), where the torsion subcategory $ \mathsf{PHBR}_{\mathrm{coc}}$ is 
the category  of ``primitive'' Hopf braces,  whose underlying Hopf algebras are generated as algebras by primitive elements.
The existence of this torsion theory is useful to prove that the category $\mathsf{SKB}$ is both a Birkhoff subcategory and a localization of the category $\CHBR$ (Theorem \ref{local}). 
In the last section we describe central extensions in $\CHBR$, as well as the Huq commutator \cite{Huq} of normal sub-Hopf braces (Proposition \ref{HCommutator}), linking the theory of Hopf braces to the theory of commutators in universal algebra. Indeed, by taking into account the fact that $\CHBR$ is strongly protomodular (Theorem \ref{thm:strongproto}), the Huq commutator of normal sub-Hopf braces correponds to the Smith commutator of the corresponding congruences \cite{BG}: this opens the way to some unexpected applications of universal algebraic methods in the theory of Hopf braces, that will be developed in the future.
The present work sheds a new light on the properties of the algebraic structures providing the solutions of the quantum Yang--Baxter equation. It further confirms the important observation in \cite{AGV} that several results of classical braces are still valid in the context of Hopf braces. Indeed, these latter structures organise themselves in a category that enjoys some of the strongest exactness properties that have been considered, so far, in non-abelian categorical algebra. \medskip

\noindent\textit{Notations and conventions}. We fix an arbitrary field $\Bbbk$ and all vector spaces are understood to be $\Bbbk$-vector spaces unless
otherwise specified. Moreover, by a linear map we mean a $\Bbbk$-linear map, and the unadorned tensor
product $\otimes$ stands for $\otimes_{\Bbbk}$. All linear maps whose domain is a tensor product will usually be defined on generators and understood to be extended by linearity. Algebras over $\Bbbk$ will be associative and unital and coalgebras over $\Bbbk$ will be coassociative and counital. 

The symbols
$\cdot,\bullet$ will be used for the multiplication of an algebra $A$. Equivalently, the multiplication will also be denoted by $m, m_{\cdot}$ or $m_{\bullet}:A\otimes A\to A$. The unit is denoted by $1$ or, equivalently, by $u:\Bbbk\to A$. The comultiplication and the counit of a coalgebra will be denoted by $\Delta$ and $\varepsilon$, respectively, while the antipode of a Hopf algebra will be denoted by $S$ or $T$. Subscripts will be added for clarity, whenever needed. We will adopt Sweedler’s notation for calculations involving the comultiplication, so that we'll write $\Delta(x)=x_{1}\otimes x_{2}$ (with the usual summation convention, where $x_{1}\otimes x_{2}$ stands for the finite sum $\sum_{i}{x^{i}_{1}\otimes x^{i}_{2}}$).

Given an object $X$ in a category $\Cc$, the identity morphism on $X$ will be denoted either by $\mathrm{Id}_{X}$ or $1_{X}$. If the category $\Cc$ is pointed, the zero object is denoted by $0$. Given a morphism $f:A\to B$ in $\Cc$, the kernel and the cokernel of $f$ in $\Cc$ will be denoted by $\mathsf{ker}(f):\mathsf{Ker}(f)\to A$ and $\mathsf{coker}(f):B\to \mathsf{Coker}(f)$, respectively. Given a monoidal category $\Cc$, we denote the categories of monoids and of comonoids in $\Cc$ by $\mathsf{Mon}(\Cc)$ and  $\mathsf{Comon}(\Cc)$, respectively. If $\Cc$ is a braided monoidal category, bimonoids and Hopf monoids in $\Cc$ are denoted by $\mathsf{Bimon}(\Cc)$ and $\mathsf{Hopf}(\Cc)$, respectively.

\section{Preliminaries}

In this section we recall some notions and results on Hopf braces and semi-abelian categories that will be useful in the following. We refer the reader to \cite{Sw} and \cite{BB} for more details about Hopf algebras and semi-abelian categories, respectively. \medskip

\noindent\textbf{Hopf braces}. The notion of Hopf brace was introduced in \cite{AGV}, providing a Hopf-theoretic generalization of the notion of skew brace \cite{GV}: it consists of two Hopf algebra structures on the same coalgebra, satisfying a compatibility condition that can be seen as a ``linearization'' of the one in the definition of a skew brace: 

\begin{definition}[{\cite[Definition 1.1]{AGV}}]
A \textit{Hopf brace} is a datum $(H,\cdot,\bullet,1,\Delta,\varepsilon,S,T)$ where $(H,\cdot,1,\Delta,\varepsilon,S)$ and $(H,\bullet,1,\Delta,\varepsilon,T)$ are Hopf algebras satisfying the following compatibility condition:
\begin{equation}\label{eq:HBRcomp.}
a\bullet(b\cdot c)=(a_{1}\bullet b)\cdot S(a_{2})\cdot(a_{3}\bullet c), \quad \text{for all}\ a,b,c\in H.
\end{equation}
We will often denote a Hopf brace simply by $(H,\cdot,\bullet)$ and adopt the notations $H^{\cdot}:=(H,\cdot,1,\Delta,\varepsilon,S)$ and $H^{\bullet}:=(H,\bullet,1,\Delta,\varepsilon,T)$. A morphism of Hopf braces $f:(H,\cdot,\bullet)\to(K,\cdot,\bullet)$ is a Hopf algebra map with respect to both the Hopf algebra structures, i.e. it satisfies
\[
f(a\cdot b)=f(a)\cdot f(b),\quad f(a\bullet b)=f(a)\bullet f(b),\quad f(1_{H})=1_{K}
\]
and
\[
\varepsilon(f(a))=\varepsilon(a),\quad \Delta(f(a))=f(a_{1})\otimes f(a_{2}), 
\]
for all $a,b\in H$. Any morphism of Hopf braces automatically preserves the antipodes, so that $f\circ S_{H}=S_{K}\circ f$ and $f\circ T_{H}=T_{K}\circ f$. A Hopf brace is \textit{cocommutative} if the comultiplication $\Delta$ is cocommutative. We denote the category of Hopf braces by $\HBR$ and its full subcategory of cocommutative Hopf braces by $\CHBR$.
\end{definition}

Any Hopf algebra $H^{\cdot}$ produces a Hopf brace $(H,\cdot,\cdot)$, called \textit{trivial} Hopf brace. Among the examples of (cocommutative) Hopf braces there are group algebras of braces \cite{Rump}, skew braces, and semi-direct products of cocommutative Hopf algebras, see \cite[Examples 1.4, 1.5, 1.6]{AGV}. \medskip

As it is proved in \cite[Lemma 1.8]{AGV}, the Hopf algebra $H^{\cdot}$ of a Hopf brace $(H,\cdot,\bullet)$ is a left $H^{\bullet}$-module algebra, i.e. 
an object in $\mathsf{Mon}(_{H^{\bullet}}\mathfrak{M})$, where the $H^{\bullet}$-action is defined, for all $a,b\in H$, by:
\begin{equation}\label{def:leftaction}
a\rightharpoonup b:=S(a_{1})\cdot(a_{2}\bullet b).
\end{equation}
In particular, the following equations are satisfied:
\[
a\rightharpoonup1=\varepsilon(a)1,\qquad a\rightharpoonup(b\cdot c)=(a_{1}\rightharpoonup b)\cdot(a_{2}\rightharpoonup c).
\]
Moreover, from the definition of Hopf brace, one obtains: 
\[
a\bullet b=a_{1}\cdot(a_{2}\rightharpoonup b),\qquad a\cdot b=a_{1}\bullet(T(a_{2})\rightharpoonup b),\qquad S(a)=a_{1}\rightharpoonup T(a_{2}).
\]
 In case the Hopf brace $(H,\cdot,\bullet)$ is cocommutative, something more can be said. Indeed, as it is proved in \cite[Lemma 2.2]{AGV}, $H^{\cdot}$ becomes a left $H^{\bullet}$-module coalgebra, i.e. an object in $\mathsf{Comon}(_{H^{\bullet}}\mathfrak{M})$, which means that the following equations are satisfied:
\[
   \Delta(a\rightharpoonup b)=(a_{1}\rightharpoonup b_{1})\otimes(a_{2}\rightharpoonup b_{2}),\quad \varepsilon(a\rightharpoonup b)=\varepsilon(a)\varepsilon(b),
   \]
where $\rightharpoonup$ is defined as in \eqref{def:leftaction}. Observe that, since $H^{\cdot}$ is in $\mathsf{Mon}(_{H^{\bullet}}\mathfrak{M})$ and in $\mathsf{Comon}(_{H^{\bullet}}\mathfrak{M})$ and it is a standard bialgebra, then it is automatically in $\mathsf{Bimon}(_{H^{\bullet}}\mathfrak{M})$, considering $_{H^{\bullet}}\mathfrak{M}$ with braiding given by the flip map (since $H^{\bullet}$ is cocommutative). Moreover, $S(a\rightharpoonup b)=a\rightharpoonup S(b)$ also holds true, so $H^{\cdot}$ is in $\mathsf{Hopf}(_{H^{\bullet}}\mathfrak{M})$. 

Cocommutative Hopf braces produce solutions of the quantum Yang--Baxter equation. Indeed, one can also define a right $H^{\bullet}$-action on $H^{\cdot}$ by
$$a\leftharpoonup b:=T(a_{1}\rightharpoonup b_{1})\bullet a_{2}\bullet b_{2}$$ for all $a,b\in H$,
that makes $H^{\cdot}$ an object in $\mathsf{Comon}(\mathfrak{M}_{H^{\bullet}})$ (see \cite[Lemma 2.2]{AGV}) and
\[
c:H\otimes H\to H\otimes H,\ a\otimes b\mapsto(a_{1}\rightharpoonup b_{1})\otimes(a_{2}\leftharpoonup b_{2})
\]
is a coalgebra isomorphism and a solution of the braid equation (or, equivalently, $\tau c$ is a solution of the quantum Yang--Baxter equation, where $\tau$ is the flip map), see \cite[Corollary 2.4]{AGV}. \medskip

\noindent\textbf{Semi-abelian categories}.
A challenging problem in categorical algebra is to identify the fundamental axioms 
allowing one to unify and investigate the homological properties that groups, Lie algebras, commutative algebras, crossed modules and many other group-like structures all have in common.
A remarkable axiomatic framework was proposed in \cite{JMT} by introducing the notion of \emph{semi-abelian} category.
We now briefly recall the definition and some basic properties that will be useful in this article.

First recall that a morphism $p \colon A \rightarrow I$ is called a regular epimorphism if it is the coequalizer of two morphisms in $\mathcal C$.
A finitely complete category $\mathcal{C}$ is \emph{regular} if: 
\begin{itemize}
\item any morphism $f \colon A \rightarrow B$ has a factorization
$f = ip $
\[\begin{tikzcd}
	A && B \\
	& I
	\arrow[from=1-1, to=1-3, "f"]
	\arrow[from=1-1, to=2-2, "p"']
	\arrow[from=2-2, to=1-3, "i"']
\end{tikzcd}\]
where $p$ is a regular epimorphism and $i$ is a monomorphism;
\item regular epimorphisms are pullback stable.
\end{itemize}
A regular category is \emph{exact} in the sense of Barr \cite{Barr} if any internal equivalence  relation $(R,r_1, r_2)$  in $\mathcal C$ on an object $A$ is a kernel pair, i.e. if it can be obtained as a pullback of a morphism $f \colon A \rightarrow B$ along itself:
\[\begin{tikzcd}
	R & A \\
	A & B
	\arrow[from=1-1, to=1-2,"r_2"]
	\arrow[from=1-1, to=2-1, "r_1"']
	\arrow[from=1-2, to=2-2,"f"]
	\arrow[from=2-1, to=2-2,"f"']
\end{tikzcd}\]
Any variety of universal algebras is an exact category, with internal equivalence relations being usually called congruences, and regular epimorphisms being the surjective homomorphisms. Any elementary topos is an exact category, as is more generally any pretopos, as the category of compact Hausdorff spaces, for instance.

A pointed category is a category with a zero object, denoted by $0$. A crucial notion for our work will be the one of (Bourn) protomodular category \cite{Bourn}. In a finitely complete 
pointed category $\mathcal C$, the protomodularity can be expressed as follows:
given any commutative diagram in $\mathcal C$

\begin{equation}
\begin{tikzcd}\label{SplitShort}
	\mathsf{Ker}(f) & A & B \\
	\mathsf{Ker}(f') & A' & B'
	\arrow[ from=1-1, to=1-2, "k"]
	\arrow[ from=1-1, to=2-1, "u"']
	\arrow[shift left, from=1-2, to=1-3, "f"]
	\arrow[from=1-2, to=2-2, "v"]
	\arrow[shift left, from=1-3, to=1-2, "s"]
	\arrow[from=1-3, to=2-3, "w"]
	\arrow[ from=2-1, to=2-2, "k'"]
	\arrow[shift left, from=2-2, to=2-3, "f'"]
	\arrow[shift left, from=2-3, to=2-2, "s'"]
\end{tikzcd}
\end{equation}
where $fs = \mathrm{Id}_B$, $f' s'  = \mathrm{Id}_{B'}$, $k= \mathsf{ker}(f)$, $k'= \mathsf{ker}(f')$, $u$ and $w$ are isomorphisms, then $v$ is an isomorphism.
In other words, a finitely complete pointed category is protomodular precisely when the \emph{Split Short Five Lemma} holds in $\mathcal C$. 
We are now ready to give the definition of semi-abelian category:
\begin{definition}\cite{JMT} \label{semi-abelian}
A pointed category is \emph{semi-abelian} if it is exact, protomodular, and it has binary coproducts.
\end{definition}
Plenty of interesting varieties in classical algebra are semi-abelian: groups, loops, rings (not necessarily unitary), Lie algebras, associative algebras, crossed modules, skew braces, any variety of $\Omega$-groups in the sense of Higgins \cite{Higgins}. Other examples of interest are the categories of cocommutative Hopf algebras over a field \cite{GSV}, commutative ${\mathbb C}^*$-algebras, the dual of the category of pointed sets, Heyting semi-lattices \cite{Johnstone}. Any abelian category is a semi-abelian category, of course. An important difference between semi-abelian and abelian categories comes from the fact that semi-abelian ones are not additive, in general.
Note that, among the varieties of algebras that are protomodular, but not semi-abelian, there are all the varieties whose theory contain more than one constant, such as unitary commutative rings and $\mathsf{MV}$-algebras, for instance. These categories are exact protomodular and cocomplete, but they are not pointed, since the initial and the terminal objects are not isomorphic.

A semi-abelian category $\mathcal C$ is  \emph{strongly protomodular} \cite{Bourn-00} if it has the following additional property:
in any diagram \eqref{SplitShort} in $\mathcal C$ where $u$ is a kernel, $B=B'$ and $w = \mathrm{Id}_B$, then the arrow $k'u$ is a kernel.
As shown in \cite{BG} these categories have a very useful property, namely that two equivalence relations $R$ and $S$ on an object $A$ admit a ``connector'' if and only if the corresponding normal monomorphisms $N_R$ and $N_S$ commute in the sense of Huq. From this observation it follows that, when the category is also semi-abelian so that both the (Smith-Pedicchio) commutator of equivalence relations \cite{Smith, Pedicchio} and the (Huq) commutator \cite{Huq} of normal subobjects can be defined, then
$R$ and $S$
have trivial commutator $[R,S]_{\mathsf{Smith}}$ in the sense of Smith if and only if $N_R$ and $N_S$ have trivial commutator $[N_R,N_S]_{\mathsf{Huq}}$ in the sense of Huq. Among the examples of strongly protomodular categories there are the categories of groups, Lie algebras, rings and, more generally, any ``category of interest'' in the sense of Orzech \cite{Orzech}, as shown in \cite{MaMe}.
Also the dual category of any elementary topos is strongly protomodular, as is the category of cocommutative Hopf algebras over a field. The semi-abelian variety of digroups is not strongly protomodular.

\section{Monoidal structure and completeness of the category $\HBR$}

It is well-known that $\HBR$ and $\CHBR$ are monoidal categories. Let us recall the monoidal structure.  \medskip

\noindent\textbf{Monoidal structure}. Let $(H,\cdot,\bullet,1,\Delta,\varepsilon,S,T)$ and $(K,\cdot',\bullet',1',\Delta',\varepsilon',S',T')$ be two Hopf braces. Since the category of Hopf algebras is monoidal with tensor product $\otimes$, we have that $H^{\cdot}\otimes K^{\cdot'}$ and $H^{\bullet}\otimes K^{\bullet'}$ are Hopf algebras on the vector space $H\otimes K$. The two multiplications are given by
\[
m_{\cdot_{\otimes}}:=(m_{\cdot}\otimes m_{\cdot'})(\mathrm{Id}_{H}\otimes\tau_{K,H}\otimes\mathrm{Id}_{K}),\qquad m_{\bullet_{\otimes}}:=(m_{\bullet}\otimes m_{\bullet'})(\mathrm{Id}_{H}\otimes\tau_{K,H}\otimes\mathrm{Id}_{K}),
\]
where $\tau$ is the canonical flip map, and the unit, which is the same for both the algebra structures, is given by $1_{\otimes}:=1\otimes1'$. The coalgebra structure of both the Hopf algebras $H^{\cdot}\otimes K^{\cdot'}$ and $H^{\bullet}\otimes K^{\bullet'}$ is given by
\[
\Delta_{\otimes}:=(\mathrm{Id}_{H}\otimes\tau_{H,K}\otimes\mathrm{Id}_{K})(\Delta\otimes\Delta'), \quad \varepsilon_{\otimes}:=\varepsilon\otimes\varepsilon'
\]
and the antipodes are 
\[
S_{\otimes}:=(S\otimes S'), \quad T_{\otimes}:=(T\otimes T').
\]
It is straightforward to prove that \eqref{eq:HBRcomp.} is satisfied:
\[
\begin{split}
    &\big((h\otimes k)_{1}\bullet_{\otimes}(a\otimes b)\big)\cdot_{\otimes}\big((S\otimes S')((h\otimes k)_{2})\big)\cdot_{\otimes}\big((h\otimes k)_{3}\bullet_{\otimes}(c\otimes d)\big)=\\&\big((h_{1}\bullet a)\otimes(k_{1}\bullet' b)\big)\cdot_{\otimes}\big(S(h_{2})\otimes S'(k_{2})\big)\cdot_{\otimes}\big((h_{3}\bullet c)\otimes(k_{3}\bullet' d)\big)=\\&\big((h_{1}\bullet a)\cdot S(h_{2})\cdot(h_{3}\bullet c)\big)\otimes\big( (k_{1}\bullet' b)\cdot' S'(k_{2})\cdot'(k_{3}\bullet' d)\big)=\\&\big(h\bullet(a\cdot c)\big)\otimes\big(k\bullet'(b\cdot' d)\big)=(h\otimes k)\bullet_{\otimes}((a\otimes b)\cdot_{\otimes}(c\otimes d)).
\end{split}
\]
It follows that
$(H\otimes K,m_{\cdot_{\otimes}},m_{\bullet_{\otimes}},1_{\otimes},\Delta_{\otimes},\varepsilon_{\otimes},S_{\otimes},T_{\otimes})$ is a Hopf brace.  
Moreover, 
the unit object is clearly given by $(\Bbbk,\cdot,\cdot)$. We denote the tensor product Hopf brace by $(H\otimes K,\cdot_{\otimes},\bullet_{\otimes})$. If $(H,\cdot,\bullet)$ and $(K,\cdot',\bullet')$ are cocommutative Hopf braces then also $(H\otimes K,\cdot_{\otimes},\bullet_{\otimes})$ is cocommutative. \medskip

The categories $\HBR$ and $\CHBR$ are  \emph{pointed}.  Indeed, given a Hopf algebra $H^{\cdot}$, the unit $u_{H^{\cdot}}$ is the unique morphism of Hopf algebras from $\Bbbk$ to $H^{\cdot}$ and the counit $\varepsilon_{H^{\cdot}}$ is the unique morphism of Hopf algebras from $H^{\cdot}$ to $\Bbbk$. Given a Hopf brace $(H,\cdot,\bullet)$, one has $u_{H^{\cdot}}=u_{H^{\bullet}}$ and $\varepsilon_{H^{\cdot}}=\varepsilon_{H^{\bullet}}$, hence $(\Bbbk,\cdot,\cdot)$ is the zero object in $\HBR$ and in $\CHBR$. Given two Hopf braces $(H,\cdot,\bullet)$ and $(K,\cdot,\bullet)$, the morphism $u_{K}\varepsilon_{H}$ is the zero morphism $(H,\cdot,\bullet)\to(K,\cdot,\bullet)$. \medskip

\noindent\textbf{Finite limits}. In \cite[Theorem 3.1 and Corollary 3.3]{Ag} it is shown that the category $\CHBR$ is complete and the category $\HBR$ has equalizers. As it is pointed out in \cite[Remark 3.3]{AC}, it is not known if $\HBR$ is complete. 

We'll now recall how equalizers in $\HBR$ (and $\CHBR$) and binary products in $\CHBR$ are made. For this, the following definition will be needed:

\begin{definition}
    Given a Hopf brace $(H,\cdot,\bullet,1,\Delta,\varepsilon,S,T)$ and a vector subspace $A\subseteq H$, we say that $(A,\cdot,\bullet)$ is a \textit{sub-Hopf brace} of $(H,\cdot,\bullet)$ if $A$ is closed in $H$ under $m_{\cdot}$, $m_{\bullet}$, $\Delta$, $S$, $T$, and it contains $1$.
\end{definition}

\begin{remark}
    The previous definition means that $(A,\cdot,1,\Delta,\varepsilon,S)$ and $(A,\bullet,1,\Delta,\varepsilon, T)$ are Hopf subalgebras of $(H,\cdot,1,\Delta,\varepsilon,S)$ and $(H,\bullet,1,\Delta,\varepsilon,T)$, respectively. Clearly $(A,\cdot,\bullet,1,\Delta,\varepsilon,S,T)$ is a Hopf brace since \eqref{eq:HBRcomp.} is satisfied for all $a,b,c\in H$.
\end{remark}

\noindent\textbf{Equalizers}. Let $f,g:(H,\cdot,\bullet)\to(A,\cdot,\bullet)$ be two morphisms in $\HBR$. Then, defining 
\[
K:=\{h\in H\ |\ h_{1}\otimes f(h_{2})\otimes h_{3}=h_{1}\otimes g(h_{2})\otimes h_{3}\}
\]
one has that $(K,\cdot,\bullet)$ is a sub-Hopf brace of $(H,\cdot,\bullet)$ and the inclusion $i:(K,\cdot,\bullet)\hookrightarrow (H,\cdot,\bullet)$ is the equalizer of $(f,g)$ in $\HBR$, see \cite[Example 3.2]{Ag}. In particular, the kernel of $f$ in $\HBR$ is given by $(\mathrm{Hker}(f),\cdot,\bullet)\subseteq(H,\cdot,\bullet)$, where
\[
\mathrm{Hker}(f):=\{h\in H\ |\ h_{1}\otimes f(h_{2})\otimes h_{3}=h_{1}\otimes 1_{A}\otimes h_{2}\}.
\]
If we consider the category $\CHBR$ of cocommutative Hopf braces, it is straightforward to prove that the equalizer of $f,g:(H,\cdot,\bullet)\to(A,\cdot,\bullet)$ in $\CHBR$ becomes
\[
K=\{h\in H\ |\ h_{1}\otimes f(h_{2})=h_{1}\otimes g(h_{2})\}.
\]
Hence, the kernel of $f$ in $\CHBR$ is given by
\[
\mathrm{Hker}(f)=\{h\in H\ |\ h_{1}\otimes f(h_{2})=h\otimes 1_{A}\}.
\]

\begin{remark}
   Let us observe that, given $f,g:(H,\cdot,\bullet)\to(A,\cdot,\bullet)$ in $\HBR$ (resp. $\CHBR$), the inclusions $i:K^{\cdot}\hookrightarrow H^{\cdot}$ and $i:K^{\bullet}\hookrightarrow H^{\bullet}$ are, respectively, the equalizers of the pairs $(f:H^{\cdot}\to A^{\cdot},g:H^{\cdot}\to A^{\cdot})$ and $(f:H^{\bullet}\to A^{\bullet},g:H^{\bullet}\to A^{\bullet})$ in $\mathsf{Hopf}_{\Bbbk}$ (resp. $\mathsf{Hopf}_{\Bbbk,\mathrm{coc}}$), see \cite{Ag2}. This just means that the forgetful functors $F^{\cdot},F^{\bullet}:\HBR\to\mathsf{Hopf}_{\Bbbk}$ (resp. $F^{\cdot},F^{\bullet}:\CHBR\to\mathsf{Hopf}_{\Bbbk,\mathrm{coc}}$) assigning to a (cocommutative) Hopf brace $(H,\cdot,\bullet)$ the (cocommutative) Hopf algebras $H^{\cdot}$ and $H^{\bullet}$, respectively, preserve equalizers.
\end{remark}

\noindent\textbf{Binary products}. Let us describe binary products only for the category $\CHBR$, keeping in mind that this construction only works in presence of cocommutativity.

Let $(H,\cdot,\bullet)$ and $(K,\cdot,\bullet)$ be two cocommutative Hopf braces. Then, their binary product in $\CHBR$ is given by  
\[
\big((H\otimes K,\cdot_{\otimes},\bullet_{\otimes}),\pi_{H},\pi_{K}\big)
\]
where $\pi_{H}:=\mathrm{Id}_{H}\otimes\varepsilon_{K}$ and $\pi_{K}:=\varepsilon_{H}\otimes\mathrm{Id}_{K}$. Let us observe that, given another cocommutative Hopf brace $(L,\cdot,\bullet)$ and any two morphisms of cocommutative Hopf braces $f:(L,\cdot,\bullet)\to(H,\cdot,\bullet)$ and $g:(L,\cdot,\bullet)\to(K,\cdot,\bullet)$, the unique morphism $\phi:(L,\cdot,\bullet)\to(H\otimes K,\cdot_{\otimes},\bullet_{\otimes})$ such that $ \pi_H \phi =f$ and $ \pi_K \phi =g$ is defined by $\phi(x)=(f\otimes g)\Delta_{L}(x)$. Note that $\phi$ is a morphism of coalgebras (and then a morphism of Hopf braces) because $\Delta_{L}$ is a morphism of coalgebras (since $L$ is cocommutative). 

\begin{remark}
The monoidal tensor product coincides with the binary product in $\CHBR$, and the monoidal unit with the terminal object. This means that the category $\CHBR$ is a \textit{cartesian} monoidal category.
\end{remark}

\begin{remark}
    Given two cocommutative Hopf braces $(H,\cdot,\bullet)$ and $(K,\cdot,\bullet)$, we have that $(H\otimes K)^{\cdot_{\otimes}}$ is the binary product of $H^{\cdot}$ and $K^{\cdot}$ in $\mathsf{Hopf}_{\Bbbk,\mathrm{coc}}$ as well as $(H\otimes K)^{\bullet_{\otimes}}$ is the product of $H^{\bullet}$ and $K^{\bullet}$ in $\mathsf{Hopf}_{\Bbbk,\mathrm{coc}}$. So, the forgetful functors $F^{\cdot},F^{\bullet}:\CHBR\to\mathsf{Hopf}_{\Bbbk,\mathrm{coc}}$ preserve also binary products. 
\end{remark}

Having the explicit description of equalizers and binary products in $\CHBR$ one can then build pullbacks in $\CHBR$. Clearly, as vector spaces, pullbacks in $\CHBR$ coincide with pullbacks in $\mathsf{Hopf}_{\Bbbk,\mathrm{coc}}$, computed with respect to both the Hopf algebra structures. \medskip

\noindent\textbf{Pullbacks}. Given $f:(A,\cdot,\bullet)\to(C,\cdot,\bullet)$ and $g:(B,\cdot,\bullet)\to(C,\cdot,\bullet)$ in $\CHBR$, the pullback object $A\times_{C}B$ in $\CHBR$ is given by the equalizer object of the pair $(f\circ\pi_{A},g\circ\pi_{B})$ in $\CHBR$, where $\pi_{A}=\mathrm{Id}_{A}\otimes\varepsilon_{B}$ and $\pi_{B}=\varepsilon_{A}\otimes\mathrm{Id}_{B}$. Thus, the ``object part'' $A\times_{C}B$ of the pullback is given by elements $a\otimes b\in A\otimes B$ such that
\[
(\mathrm{Id}_{A\otimes B}\otimes f(\mathrm{Id}_{A}\otimes\varepsilon_{B}))\Delta_{A\otimes B}(a\otimes b)=(\mathrm{Id}_{A\otimes B}\otimes g(\varepsilon_{A}\otimes\mathrm{Id}_{B}))\Delta_{A\otimes B}(a\otimes b),
\]
i.e. $a_{1}\otimes b\otimes f(a_{2})=a\otimes b_{1}\otimes g(b_{2})$. This equality is equivalent to $a_{1}\otimes f(a_{2})\otimes b=a\otimes g(b_{2})\otimes b_{1}$ and, by cocommutativity, also to $ a_{1}\otimes f(a_{2})\otimes b=a\otimes g(b_{1})\otimes b_{2}$. Thus, $A\times_{C}B$ is the vector space
\[
A\times_{C}B=\{a\otimes b\in A\otimes B\ |\ a_{1}\otimes f(a_{2})\otimes b=a\otimes g(b_{1})\otimes b_{2}\}
\]
and it is a sub-Hopf brace of the product $(A\otimes B,\cdot_{\otimes},\bullet_{\otimes})$. The pullback of the pair $(f,g)$ in $\CHBR$ is then given by $((A\times_{C}B,\cdot_{\otimes},\bullet_{\otimes}),\pi_{A},\pi_{B})$.

\section{The Split Short Five Lemma holds in $\CHBR$}\label{Section-protomodular} 
The category $\mathsf{Hopf}_{\Bbbk,\mathrm{coc}}$ of cocommutative Hopf algebras is protomodular, since so is any category of internal groups in a category with finite limits (see \cite[Example 5, p. 57]{Bourn-90}), and $\mathsf{Hopf}_{\Bbbk,\mathrm{coc}}$ can be seen as the category $\mathsf{Grp}(\mathsf{Coalg}_{\Bbbk,\mathrm{coc}})$ of internal groups in the category of cocommutative coalgebras. 

For the sake of completeness we'll prove the protomodularity of $\CHBR$ by showing that the \emph{Split Short Five Lemma} holds true in it. \medskip


Let us first recall that, given a Hopf algebra $H$ and an object $R$ in $\mathsf{Hopf}(_{H}\mathfrak{M})$, one can build the \textit{smash product} Hopf algebra $R\#H$ (also called \emph{semi-direct product}) which is $R\otimes H$ as vector space, with product given by
\[
(r\otimes h)\cdot_{\#}(r'\otimes h'):=r(h_{1}\rightharpoonup r')\otimes h_{2}h'
\]
and unit $1_{\#}:=1_{R}\otimes 1_{H}$, where $\rightharpoonup:H\otimes R\to R$ denotes the $H$-action on $R$. Moreover, one defines $\varepsilon_{\#}:=\varepsilon_{R}\otimes\varepsilon_{H}$ and $\Delta_{\#}(r\otimes h)=r_{1}\otimes h_{1}\otimes r_{2}\otimes h_{2}$, while the antipode is $S_{\#}(r\otimes h):=(S_{H}(h_{1})\rightharpoonup S_{R}(r))\otimes S_{H}(h_{2})$. We also recall that, given a pair of morphisms
\begin{tikzcd}
	A & H
	\arrow[shift left, from=1-1, to=1-2,"\pi"]
	\arrow[shift left, from=1-2, to=1-1,"\gamma"]
\end{tikzcd}
of cocommutative Hopf algebras such that $\pi\circ\gamma=\mathrm{Id}_{H}$, 
there is an isomorphism of Hopf algebras 
\[
\phi:\mathrm{Hker}(\pi)\# H\to A,\quad k\otimes h\mapsto k\gamma(h), 
\]
see \cite{Molnar}, 
where $\mathrm{Hker}(\pi)$ is in $\mathsf{Hopf}(_{H}\mathfrak{M})$ with action $\rightharpoonup:H\otimes\mathrm{Hker}(\pi)\to\mathrm{Hker}(\pi)$ given by the assignment $h\otimes k\mapsto\gamma(h_{1})k\gamma(S_{H}(h_{2}))$. The inverse of $\phi$ is given by $\phi^{-1}:A\to\mathrm{Hker}(\pi)\#H$, $a\mapsto a_{1}\gamma(\pi(S(a_{2})))\otimes\pi(a_{3})$.

\begin{proposition}\label{prop:protomodularity}
    The Split Short Five Lemma holds in $\CHBR$. Accordingly, the finitely complete pointed category $\CHBR$ is protomodular.
\end{proposition}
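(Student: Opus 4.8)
The plan is to verify the Split Short Five Lemma directly, by exploiting the fact that the two forgetful functors $F^{\cdot}, F^{\bullet}\colon \CHBR \to \mathsf{Hopf}_{\Bbbk,\mathrm{coc}}$ preserve kernels (as recorded in the remarks on equalizers) and that $\mathsf{Hopf}_{\Bbbk,\mathrm{coc}}$ is already known to be protomodular. Concretely, suppose we are given a diagram of the shape \eqref{SplitShort} in $\CHBR$ with $fs = \mathrm{Id}_B$, $f's' = \mathrm{Id}_{B'}$, $k = \mathsf{ker}(f)$, $k' = \mathsf{ker}(f')$, and $u, w$ isomorphisms. Applying $F^{\cdot}$ produces a diagram in $\mathsf{Hopf}_{\Bbbk,\mathrm{coc}}$ of exactly the same shape: the top and bottom rows become split short exact sequences of cocommutative Hopf algebras because $F^{\cdot}$ sends $k$ to $\mathsf{ker}(f^{\cdot})$ and sends the splitting $s$ to a Hopf-algebra splitting of $f^{\cdot}$, and $F^{\cdot}(u), F^{\cdot}(w)$ are still isomorphisms. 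Since $\mathsf{Hopf}_{\Bbbk,\mathrm{coc}}$ is protomodular, $F^{\cdot}(v)$ is an isomorphism of cocommutative Hopf algebras; in particular $v$ is a bijective linear map. Being a bijective morphism of Hopf braces whose inverse is automatically a coalgebra map (and then, by the standard argument, an algebra map for both $\cdot$ and $\bullet$, compatible with units, counits and antipodes), $v$ is an isomorphism in $\CHBR$. The conclusion that $\CHBR$ is protomodular is then immediate from the definition, once one recalls that $\CHBR$ is pointed with zero object $(\Bbbk,\cdot,\cdot)$ and finitely complete by the discussion of limits in the previous section.

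Alternatively — and this is presumably the route the authors take, given that they have just recalled the smash-product machinery — one can give a hands-on proof using the semi-direct product decomposition. The point is that for a split epimorphism $(f,s)$ with $fs = \mathrm{Id}_B$ in $\CHBR$, the isomorphism $\phi\colon \mathrm{Hker}(f^{\cdot})\# B^{\cdot} \to A^{\cdot}$, $k\otimes b \mapsto k\cdot s(b)$, recalled from \cite{Molnar} can be upgraded: the $B^{\bullet}$-structure interacts with everything via the Hopf-brace compatibility \eqref{eq:HBRcomp.}, so that $A$ is reconstructed, as a Hopf brace, from the data of its kernel $\mathsf{Ker}(f)$ together with the conjugation-type actions of $B$ with respect to both operations. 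One then observes that in diagram \eqref{SplitShort} the map $v$, transported through the two semi-direct product decompositions on the source and target, becomes $u \# w$ (up to the identifications $\phi$), which is manifestly an isomorphism because $u$ and $w$ are; hence $v$ is an isomorphism. This requires checking that $v\circ k = k'\circ u$ and $f'\circ v = w\circ f$ (the commutativity hypotheses) genuinely force $v(k\cdot s(b)) = u(k)\cdot s'(w(b))$ and similarly for $\bullet$, which is a direct computation with the defining formulas.

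I would present the first argument as the main proof, since it is short and conceptual, and it isolates the only real content: that $F^{\cdot}$ (equivalently $F^{\bullet}$) preserves the relevant split short exact sequences and reflects isomorphisms. The main obstacle — really the only thing needing care — is the last reflection step: from ``$v$ is a bijective morphism of Hopf braces'' to ``$v$ is an isomorphism of Hopf braces.'' This is the standard fact that a bijective morphism in a category of (co)algebraic structures with underlying coalgebra a comonoid in $\Vec$ has an inverse living in the same category; here one checks that $v^{-1}$ is a coalgebra map (automatic, as the inverse of a coalgebra iso), then that it respects $m_{\cdot}$, $m_{\bullet}$, $1$, $\varepsilon$ and both antipodes, each of which follows by pre- and post-composing the corresponding identity for $v$ with $v^{-1}$. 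Everything else — that the rows of the image diagram under $F^{\cdot}$ are split short exact, that $F^{\cdot}(u), F^{\cdot}(w)$ are isos, and that $\CHBR$ is finitely complete and pointed — has already been established earlier in the paper, so no further obstacle arises.
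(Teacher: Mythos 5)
Your proposal is correct and matches the paper: the authors' actual proof is exactly your second, hands-on route (they compute $g\phi(k\otimes h)=\phi'(f\otimes l)(k\otimes h)$ via the Molnar smash-product isomorphisms and conclude that $g$ is bijective, hence an isomorphism in $\CHBR$), while your first, functorial argument is precisely the alternative the authors record in the remark immediately after their proof. The only tiny imprecision is in your parenthetical list of commutativity hypotheses for the second route: to get $v(k\cdot s(b))=u(k)\cdot s'(w(b))$ you also need $v\circ s = s'\circ w$, which is part of the commutativity of diagram \eqref{SplitShort} but is not implied by $v\circ k=k'\circ u$ and $f'\circ v=w\circ f$ alone.
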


\begin{proof}
Consider the following commutative diagram
\[
\begin{tikzcd}
	(\mathrm{Hker}(\pi),\cdot,\bullet) & (A,\cdot,\bullet) & (H,\cdot,\bullet) \\
	(\mathrm{Hker}(\pi'),\cdot,\bullet) & (A',\cdot,\bullet) & (H',\cdot,\bullet)
	\arrow[hook,from=1-1, to=1-2,"\iota"]
	\arrow[from=1-1, to=2-1, "f"']
	\arrow[shift left, from=1-2, to=1-3,"\pi"]
	\arrow[from=1-2, to=2-2, "g"]
	\arrow[shift left, from=1-3, to=1-2, "\gamma"]
	\arrow[from=1-3, to=2-3,"l"]
	\arrow[hook,from=2-1, to=2-2, "\iota'"]
	\arrow[shift left, from=2-2, to=2-3, "\pi'"]
	\arrow[shift left, from=2-3, to=2-2, "\gamma'"]
\end{tikzcd}
\]
in $\CHBR$, where $\iota:(\mathrm{Hker}(\pi),\cdot,\bullet)\hookrightarrow(A,\cdot,\bullet)$ and $\iota':(\mathrm{Hker}(\pi'),\cdot,\bullet)\hookrightarrow(A',\cdot,\bullet)$ are the kernels of $\pi$ and $\pi'$ in $\CHBR$, respectively, $\pi\circ\gamma=\mathrm{Id}_{H}$ and $\pi'\circ\gamma'=\mathrm{Id}_{H'}$.

Suppose that $f$ and $l$ are isomorphisms in $\CHBR$ and $g$ is a morphism in $\CHBR$. As observed above, we know that $\phi:\mathrm{Hker}(\pi)^{\cdot}\#H^{\cdot}\to A^{\cdot}$, defined by $k\otimes h\mapsto k\cdot\gamma(h)$, and $\phi':\mathrm{Hker}(\pi')^{\cdot}\#H'^{\cdot}\to A'^{\cdot}$, defined by $k\otimes h\mapsto k\cdot\gamma'(h)$, are isomorphisms of Hopf algebras. We can compute
\[
g\phi(k\otimes h)=g(k\cdot\gamma(h))=g(k)\cdot g(\gamma(h))=f(k)\cdot\gamma'(l(h))=\phi'(f\otimes l)(k\otimes h),
\]
i.e. $g\circ\phi=\phi'\circ(f\otimes l)$. Since $\phi$, $\phi'$ and $f\otimes l$ are bijective linear maps, also $g$ is a bijective linear map, hence $g$ is an isomorphism in $\CHBR$.
\end{proof}

    We have included a direct proof of the protomodularity of $\CHBR$ for the reader's convenience. This same result 
    could also be deduced from the protomodularity of the category of cocommutative Hopf algebras. Indeed, the forgetful functors $F^{\cdot},F^{\bullet}:\CHBR\to\mathsf{Hopf}_{\Bbbk,\mathrm{coc}}$ preserve (finite) limits and reflect isomorphisms, hence $\CHBR$ is protomodular since $\mathsf{Hopf}_{\Bbbk,\mathrm{coc}}$ is so.

\noindent\textbf{Semi-direct product decomposition for cocommutative Hopf braces}. Here we provide a semi-direct product decomposition for cocommutative Hopf braces, that will also be used later. First we show the following result, whose proof is straightforward.

\begin{lemma}\label{lem:transportingHopfbraces}
     Let $(K,\cdot,\bullet)$ be a Hopf brace. 
\begin{itemize}
    \item[1)] If $H^{\cdot}$ is a Hopf algebra and $f:H^{\cdot}\to K^{\cdot}$ is an isomorphism of Hopf algebras, then there is a unique Hopf brace structure $(H,\cdot,\overline{\bullet})$ such that $f:(H,\cdot,\overline{\bullet})\to(K,\cdot,\bullet)$ is an isomorphism in $\HBR$. \medskip
    \item[2)] If $H^{\bullet}$ is a Hopf algebra and $f:H^{\bullet}\to K^{\bullet}$ is an isomorphism of Hopf algebras, then there is a unique Hopf brace structure $(H,\overline{\cdot},\bullet)$ such that $f:(H,\overline{\cdot},\bullet)\to(K,\cdot,\bullet)$ is an isomorphism in $\HBR$.
\end{itemize}  
    
\end{lemma}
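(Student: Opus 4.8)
The idea is simply transport of structure along the bijection $f$, and then checking that the transported data again satisfies the Hopf brace axioms. I only write out part 1), since part 2) is entirely symmetric (exchange the roles of $\cdot$ and $\bullet$, and of $S$ and $T$). So suppose $H^{\cdot}=(H,\cdot,1,\Delta,\varepsilon,S)$ is a Hopf algebra and $f:H^{\cdot}\to K^{\cdot}$ is an isomorphism of Hopf algebras; in particular $f$ is a coalgebra isomorphism, so $\Delta_H(x)=x_1\otimes x_2$ with $f(x_1)\otimes f(x_2)=\Delta_K(f(x))$, and $\varepsilon_K\circ f=\varepsilon_H$, and $f\circ S_H=S_K\circ f$. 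I define a second multiplication on $H$ by pulling back the $\bullet$-multiplication of $K$ along $f$:
\[
x\mathbin{\overline{\bullet}} y:=f^{-1}\bigl(f(x)\bullet f(y)\bigr),\qquad \overline{T}:=f^{-1}\circ T_K\circ f.
\]

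First I would check that $(H,\overline{\bullet},1,\Delta,\varepsilon,\overline{T})$ is a Hopf algebra: associativity and unitality of $\overline{\bullet}$ follow immediately from those of $\bullet$ together with $f(1_H)=1_K$; the bialgebra compatibility between $\overline{\bullet}$ and $(\Delta,\varepsilon)$ follows because $f$ is simultaneously an algebra map for $\bullet$ (after transport) and a coalgebra map, so $\Delta(x\mathbin{\overline{\bullet}}y)=(x_1\mathbin{\overline{\bullet}}y_1)\otimes(x_2\mathbin{\overline{\bullet}}y_2)$ is obtained by applying $f^{-1}\otimes f^{-1}$ to the corresponding identity in $K$; and the antipode identity $x_1\mathbin{\overline{\bullet}}\overline{T}(x_2)=\varepsilon(x)1=\overline{T}(x_1)\mathbin{\overline{\bullet}}x_2$ is again the transport of the antipode identity for $T_K$. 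Hence $f:(H,\overline{\bullet},1,\Delta,\varepsilon,\overline{T})\to(K,\bullet,1,\Delta_K,\varepsilon_K,T_K)$ is by construction an isomorphism of Hopf algebras.

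Next I would verify the Hopf brace compatibility \eqref{eq:HBRcomp.} for the datum $(H,\cdot,\overline{\bullet})$. Given $a,b,c\in H$, apply $f$ to the left-hand side: since $f$ is an algebra map for both $\cdot$ and (after transport) $\overline{\bullet}$, and a coalgebra map, $f$ sends $a\mathbin{\overline{\bullet}}(b\cdot c)$ to $f(a)\bullet(f(b)\cdot f(c))$ and sends $(a_1\mathbin{\overline{\bullet}}b)\cdot S(a_2)\cdot(a_3\mathbin{\overline{\bullet}}c)$ to $(f(a)_1\bullet f(b))\cdot S_K(f(a)_2)\cdot(f(a)_3\bullet f(c))$ (using $f\circ S_H=S_K\circ f$ and $\Delta_K f=(f\ot f)\Delta_H$); these two images are equal because $(K,\cdot,\bullet)$ is a Hopf brace, and since $f$ is injective the original identity holds in $H$. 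Thus $(H,\cdot,\overline{\bullet},1,\Delta,\varepsilon,S,\overline{T})$ is a Hopf brace and $f$ is an isomorphism in $\HBR$. For uniqueness: if $(H,\cdot,\overline{\bullet}')$ is another Hopf brace structure making $f$ an isomorphism in $\HBR$, then $f$ must be an algebra map $(H,\overline{\bullet}')\to(K,\bullet)$, forcing $f(x\mathbin{\overline{\bullet}'}y)=f(x)\bullet f(y)$, hence $x\mathbin{\overline{\bullet}'}y=f^{-1}(f(x)\bullet f(y))=x\mathbin{\overline{\bullet}}y$; and a morphism of Hopf braces preserves antipodes, so $\overline{T}'=\overline{T}$ as well. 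This gives uniqueness of the whole structure.

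There is no real obstacle here: every step is a routine transport-of-structure argument, and the only thing to be slightly careful about is bookkeeping the two roles $f$ plays (algebra map for the relevant multiplication and coalgebra map) when unwinding the compatibility condition. The statement that ``any morphism of Hopf braces preserves antipodes'', recalled in the definition, is what makes the uniqueness clause come for free once the two multiplications are pinned down.
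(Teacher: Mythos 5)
Your proposal is correct and follows essentially the same route as the paper's proof: transport the $\bullet$-structure along $f$ via $x\mathbin{\overline{\bullet}}y:=f^{-1}(f(x)\bullet f(y))$, check the Hopf algebra axioms and the compatibility \eqref{eq:HBRcomp.} by conjugating with $f$, and derive uniqueness from the forced definition of the multiplication. The only cosmetic difference is that you verify the brace compatibility by applying $f$ and invoking injectivity, whereas the paper computes directly with $f^{-1}$; these are the same argument.
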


\begin{proof}
    1). Given $a,b\in H$, we must define $a\overline{\bullet} b:=f^{-1}(f(a)\bullet f(b))$ in order to have $f(a\overline{\bullet} b)=f(a)\bullet f(b)$. In fact one can transport the algebra structure of $K^{\bullet}$ through $f$ such that $f:(H,\overline{\bullet},1)\to (K,\bullet,1)$ becomes an isomorphism of algebras. Hence, if we show that $(H,\cdot,\overline{\bullet})$ is an object in $\HBR$, then $f$ will be automatically an isomorphism in $\HBR$. 

We already know that $H$ is a coalgebra, we need to prove that $H^{\overline{\bullet}}$ is a bialgebra. Since $f$ is a morphism of coalgebras, the same holds for $f^{-1}$ and then $H^{\overline{\bullet}}$ is a bialgebra because $K^{\bullet}$ is a bialgebra. Indeed, we obtain
\[
\varepsilon(a\overline{\bullet} b)=\varepsilon(f^{-1}(f(a)\bullet f(b)))=\varepsilon(f(a)\bullet f(b))=\varepsilon(f(a))\varepsilon(f(b))=\varepsilon(a)\varepsilon(b)
\]
and 
\[
\begin{split}
\Delta(a\overline{\bullet} b)&=\Delta(f^{-1}(f(a)\bullet f(b)))=(f^{-1}\otimes f^{-1})\Delta(f(a)\bullet f(b))\\&=f^{-1}(f(a)_{1}\bullet f(b)_{1})\otimes f^{-1}(f(a)_{2}\bullet f(b)_{2})\\&=f^{-1}(f(a_{1})\bullet f(b_{1}))\otimes f^{-1}(f(a_{2})\bullet f(b_{2}))\\&=(a_{1}\overline{\bullet} b_{1})\otimes(a_{2}\overline{\bullet} b_{2}).
\end{split}
\]
Moreover, $H^{\overline{\bullet}}$ is a Hopf algebra with antipode $f^{-1}Tf$ since $K^{\bullet}$ has antipode $T$ and $f^{-1}(1)=1$. Indeed, we have
\[
\begin{split}
(f^{-1}Tf(a_{1}))\overline{\bullet} a_{2}&=f^{-1}\big(ff^{-1}Tf(a_{1})\bullet f(a_{2})\big)=f^{-1}(T(f(a)_{1})\bullet f(a)_{2})\\&=f^{-1}(\varepsilon(f(a))1)=\varepsilon(a)f^{-1}(1)=\varepsilon(a)1,
\end{split}
\]
analogously for the other one. It just remains to prove that \eqref{eq:HBRcomp.} holds true for $(H,\cdot,\overline{\bullet})$. This happens since \eqref{eq:HBRcomp.} holds for $(K,\cdot,\bullet)$ and $f$ is a morphism of coalgebras and algebras with respect to the product $\cdot$ (and hence also $f^{-1}$ is so). Indeed, we have
\[
\begin{split}
    a\overline{\bullet}(b\cdot c)&=f^{-1}(f(a)\bullet f(b\cdot c))=f^{-1}(f(a)\bullet(f(b)\cdot f(c)))\\&=f^{-1}\Big(\big(f(a)_{1}\bullet f(b)\big)\cdot S(f(a)_{2})\cdot\big(f(a)_{3}\bullet f(c)\big)\Big)\\&=f^{-1}\Big(\big(f(a_{1})\bullet f(b)\big)\cdot f(S(a_{2}))\cdot\big(f(a_{3})\bullet f(c)\big)\Big)\\&=f^{-1}\big(f(a_{1})\bullet f(b)\big)\cdot f^{-1}f(S(a_{2}))\cdot f^{-1}\big(f(a_{3})\bullet f(c)\big)\\&=(a_{1}\overline{\bullet} b)\cdot S(a_{2})\cdot(a_{3}\overline{\bullet} c).
\end{split}
\]

\noindent 2). We must define $a\overline{\cdot} b:=f^{-1}(f(a)\cdot f(b))$ in order to have $f(a\overline{\cdot} b)=f(a)\cdot f(b)$, for all $a,b\in H$. Analogously to what was done before, $(H,\overline{\cdot},1,\Delta,\varepsilon,\overline{S}:=f^{-1}Sf)$ is the unique Hopf algebra structure such that $f:H^{\overline{\cdot}}\to K^{\cdot}$ is an isomorphism of Hopf algebras. We only have to verify that \eqref{eq:HBRcomp.} is satisfied for $(H,\overline{\cdot},\bullet)$:
\[
\begin{split}
    a\bullet(b\overline{\cdot}c)&=a\bullet f^{-1}(f(b)\cdot f(c))=f^{-1}\big(f(a)\bullet(f(b)\cdot f(c))\big)\\&=f^{-1}\big((f(a)_{1}\bullet f(b))\cdot S(f(a)_{2})\cdot(f(a)_{3}\bullet f(c))\big)\\&=f^{-1}\big((f(a_{1})\bullet f(b))\cdot S(f(a_{2}))\cdot(f(a_{3})\bullet f(c))\big)\\&=f^{-1}\big(f(a_{1}\bullet b)\cdot ff^{-1}Sf(a_{2})\cdot f(a_{3}\bullet c)\big)\\&=(a_{1}\bullet b)\overline{\cdot}f^{-1}Sf(a_{2})\overline{\cdot}(a_{3}\bullet c)\\&=(a_{1}\bullet b)\overline{\cdot}\overline{S}(a_{2})\overline{\cdot}(a_{3}\bullet c)
\end{split}
\]
using that \eqref{eq:HBRcomp.} holds for $(K,\cdot,\bullet)$ and $f$ is a morphism of coalgebras and algebras with respect to $\bullet$ (and so is $f^{-1}$).
\end{proof}

The previous result allows us to obtain the aforementioned semi-direct product decomposition for cocommutative Hopf braces.

\begin{proposition}\label{thm:points}
Let 
\[
\begin{tikzcd}
	(A,\cdot,\bullet) & (H,\cdot,\bullet)
	\arrow[shift left, from=1-1, to=1-2, "\pi"]
	\arrow[shift left, from=1-2, to=1-1, "\gamma"]
\end{tikzcd}
\]
be an object in the category $\mathsf{Pt}(\CHBR)$ of ``points'' in $\CHBR$, i.e. $\pi$ and $\gamma$ are morphisms in $\CHBR$ such that $\pi\circ\gamma=\mathrm{Id}_{H}$. Then, $(A,\cdot,\bullet)$ is isomorphic to $(\mathrm{Hker}(\pi)\otimes H,\cdot_{\#},\overline{\bullet})$ in $\CHBR$, where
\[
(k\otimes h)\cdot_{\#}(k'\otimes h'):=\big(k\cdot\gamma(h_{1})\cdot k'\cdot\gamma S(h_{2})\big)\otimes( h_{3}\cdot h')
\]
and
\[
(k\otimes h)\overline{\bullet}(k'\otimes h'):=\big((k\cdot\gamma(h_{1}))\bullet(k'\cdot\gamma(h'_{1}))\big)\cdot\gamma S(h_{2}\bullet h'_{2})\otimes\big( h_{3}\bullet h'_{3}\big).
\]
Moreover, $(A,\cdot,\bullet)$ is also isomorphic to $(\mathrm{Hker}(\pi)\otimes H,\overline{\cdot},\bullet_{\#})$ in $\CHBR$, where
\[
(k\otimes h)\bullet_{\#}(k'\otimes h'):=\big(k\bullet\gamma(h_{1})\bullet k'\bullet\gamma T(h_{2})\big)\otimes( h_{3}\bullet h')
\]
and
\[
(k\otimes h)\overline{\cdot}(k'\otimes h'):=\big((k\bullet\gamma(h_{1}))\cdot(k'\bullet\gamma(h'_{1}))\big)\bullet\gamma T(h_{2}\cdot h'_{2})\otimes\big( h_{3}\cdot h'_{3}\big).
\]
\end{proposition}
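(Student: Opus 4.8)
The strategy is to combine the smash product (semi-direct product) decomposition of a point of cocommutative Hopf algebras, recalled just before Proposition \ref{prop:protomodularity}, with Lemma \ref{lem:transportingHopfbraces}, which allows us to transport the ``missing'' Hopf algebra operation along an isomorphism. I will carry out the first decomposition in detail; the second one follows verbatim after exchanging the roles of $\cdot$ and $\bullet$.

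Applying the forgetful functor $F^{\cdot}\colon\CHBR\to\mathsf{Hopf}_{\Bbbk,\mathrm{coc}}$, which preserves finite limits, the given point yields a point $\pi\colon A^{\cdot}\to H^{\cdot}$ with section $\gamma\colon H^{\cdot}\to A^{\cdot}$ in $\mathsf{Hopf}_{\Bbbk,\mathrm{coc}}$, having $\mathrm{Hker}(\pi)$ as its Hopf kernel; note that this is the same subspace of $A$ whether computed from $A^{\cdot}$ or from $A^{\bullet}$, since $\mathrm{Hker}(\pi)$ depends only on the shared coalgebra structure and on $\pi$. By the isomorphism recalled before Proposition \ref{prop:protomodularity}, the map $\phi\colon\mathrm{Hker}(\pi)^{\cdot}\# H^{\cdot}\to A^{\cdot}$, $k\otimes h\mapsto k\cdot\gamma(h)$, is an isomorphism of Hopf algebras, where $\mathrm{Hker}(\pi)^{\cdot}$ carries the conjugation $H^{\cdot}$-action $h\rightharpoonup k=\gamma(h_{1})\cdot k\cdot\gamma S(h_{2})$. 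Substituting this action into the smash product multiplication $(k\otimes h)\cdot_{\#}(k'\otimes h')=k\cdot(h_{1}\rightharpoonup k')\otimes h_{2}\cdot h'$ and relabelling the Sweedler components of $h$ produces exactly the formula for $\cdot_{\#}$ in the statement.

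Since $\phi$ is an isomorphism of cocommutative Hopf algebras and $(A,\cdot,\bullet)$ is a cocommutative Hopf brace, Lemma \ref{lem:transportingHopfbraces}(1) provides a unique Hopf brace structure $(\mathrm{Hker}(\pi)\otimes H,\cdot_{\#},\overline{\bullet})$ making $\phi$ an isomorphism in $\HBR$; it is cocommutative because the underlying coalgebra of the smash product is the (cocommutative) tensor product coalgebra of $\mathrm{Hker}(\pi)$ and $H$, so it lies in $\CHBR$ and $\phi$ is an isomorphism in $\CHBR$. By the construction in that lemma, $x\,\overline{\bullet}\,y=\phi^{-1}(\phi(x)\bullet\phi(y))$. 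It then remains to compute this explicitly: writing $x=k\cdot\gamma(h)$ and $y=k'\cdot\gamma(h')$, applying the explicit inverse $\phi^{-1}(a)=a_{1}\cdot\gamma S(\pi(a_{2}))\otimes\pi(a_{3})$ (with $S$ the $\cdot$-antipode), and using that $\pi$ is a morphism of $\bullet$-algebras and of coalgebras, that $\pi\gamma=\mathrm{Id}_{H}$, that $\pi(k)=\varepsilon(k)1_{H}$ for $k\in\mathrm{Hker}(\pi)$, and finally coassociativity together with the counit axioms to collapse the scalar factors $\varepsilon(k_{(i)})$ that appear, one arrives at
\[
(k\otimes h)\,\overline{\bullet}\,(k'\otimes h')=\big((k\cdot\gamma(h_{1}))\bullet(k'\cdot\gamma(h'_{1}))\big)\cdot\gamma S(h_{2}\bullet h'_{2})\otimes(h_{3}\bullet h'_{3}),
\]
which is the stated formula.

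The ``Moreover'' part is obtained by the symmetric argument: apply $F^{\bullet}$, use the smash product isomorphism $\psi\colon\mathrm{Hker}(\pi)^{\bullet}\# H^{\bullet}\to A^{\bullet}$, $k\otimes h\mapsto k\bullet\gamma(h)$, with $T$ the $\bullet$-antipode and conjugation action $h\rightharpoonup k=\gamma(h_{1})\bullet k\bullet\gamma T(h_{2})$, which gives the stated formula for $\bullet_{\#}$; then invoke Lemma \ref{lem:transportingHopfbraces}(2) to transport $\cdot$ along $\psi$, computing $x\,\overline{\cdot}\,y=\psi^{-1}(\psi(x)\cdot\psi(y))$ via $\psi^{-1}(a)=a_{1}\bullet\gamma T(\pi(a_{2}))\otimes\pi(a_{3})$ in the same way. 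I expect the only real work to be this last explicit Sweedler-notation computation of $\overline{\bullet}$ (and of $\overline{\cdot}$); identifying $\phi$, $\cdot_{\#}$ and invoking the transport lemma are all immediate from the ingredients already available.
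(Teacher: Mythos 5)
Your proposal is correct and follows essentially the same route as the paper: the Molnar smash-product isomorphism $\phi^{\cdot}\colon\mathrm{Hker}(\pi)^{\cdot}\#H^{\cdot}\to A^{\cdot}$ (resp.\ $\phi^{\bullet}$) combined with Lemma \ref{lem:transportingHopfbraces} to transport the second operation, then the explicit Sweedler computation of $\overline{\bullet}=(\phi^{\cdot})^{-1}(\phi^{\cdot}(-)\bullet\phi^{\cdot}(-))$ using $\pi\gamma=\mathrm{Id}_H$ and the $\mathrm{Hker}(\pi)$-condition. You leave that final computation as a sketch, but you have correctly identified every ingredient it requires, and the paper itself only writes it out for $\overline{\bullet}$ and leaves $\overline{\cdot}$ to the reader.
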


\begin{proof}
As we recalled in Section \ref{Section-protomodular}, there is an isomorphism of Hopf algebras 
\[
\phi^{\cdot}:(\mathrm{Hker}(\pi)\otimes H)^{\cdot_{\#}}\to A^{\cdot},\quad k\otimes h\mapsto k\cdot\gamma(h),
\]
where the product $\cdot_{\#}$ of $\mathrm{Hker}(\pi)\otimes H$ is defined by 
\[
(k\otimes h)\cdot_{\#}(k'\otimes h'):=\big(k\cdot(h_{1}\rightharpoonup k')\big)\otimes(h_{2}\cdot h')=\big(k\cdot\gamma(h_{1})\cdot k'\cdot\gamma(S(h_{2}))\big)\otimes( h_{3}\cdot h').
\]
The inverse of $\phi^{\cdot}$ is given by $$(\phi^{\cdot})^{-1}:A^{\cdot}\to(\mathrm{Hker}(\pi)\otimes H)^{\cdot_{\#}}, \quad  a\mapsto a_{1}\cdot\gamma(\pi(S(a_{2})))\otimes\pi(a_{3}).$$ By 1) of Lemma \ref{lem:transportingHopfbraces} there is a unique Hopf brace structure $(\mathrm{Hker}(\pi)\otimes H,\cdot_{\#},\overline{\bullet})$ such that $\phi^{\cdot}:(\mathrm{Hker}(\pi)\otimes H,\cdot_{\#},\overline{\bullet})\to(A,\cdot,\bullet)$ is an isomorphism in $\CHBR$. Explicitly, we have
\[
\begin{split}
    &(k\otimes h)\overline{\bullet}(k'\otimes h'):=(\phi^{\cdot})^{-1}\big(\phi^{\cdot}(k\otimes h)\bullet\phi^{\cdot}(k'\otimes h')\big)=(\phi^{\cdot})^{-1}\big((k\cdot\gamma(h))\bullet(k'\cdot\gamma(h'))\big)\\&=\big((k_{1}\cdot\gamma(h_{1}))\bullet(k'_{1}\cdot\gamma(h'_{1}))\big)\cdot S\gamma\pi((k_{2}\cdot\gamma(h_{2}))\bullet(k'_{2}\cdot\gamma(h'_{2})))\otimes\pi\big((k_{3}\cdot\gamma(h_{3}))\bullet(k'_{3}\cdot\gamma(h'_{3}))\big)\\&=\big((k_{1}\cdot\gamma(h_{1}))\bullet(k'_{1}\cdot\gamma(h'_{1}))\big)\cdot S\gamma((\pi(k_{2})\cdot h_{2})\bullet(\pi(k'_{2})\cdot h'_{2}))\otimes(\pi(k_{3})\cdot h_{3})\bullet(\pi(k'_{3})\cdot h'_{3})\\&\overset{(!)}{=}\big((k_{1}\cdot\gamma(h_{1}))\bullet(k'_{1}\cdot\gamma(h'_{1}))\big)\cdot S\gamma((\pi(k_{2})\cdot h_{2})\bullet(\pi(k'_{2})\cdot h'_{2}))\otimes\big( h_{3}\bullet h'_{3}\big)\\&\overset{(!)}{=}\big((k\cdot\gamma(h_{1}))\bullet(k'\cdot\gamma(h'_{1}))\big)\cdot\gamma(S(h_{2}\bullet h'_{2}))\otimes\big( h_{3}\bullet h'_{3}\big)
\end{split}
\]
where $(!)$ follows from the fact that $k,k'\in\mathrm{Hker}(\pi)$. 

We can also consider the isomorphism of Hopf algebras
\[
\phi^{\bullet}:(\mathrm{Hker}(\pi)\otimes H)^{\bullet_{\#}}\to A^{\bullet},\quad k\otimes h\mapsto k\bullet\gamma(h),
\]
where the product $\bullet_{\#}$ of $\mathrm{Hker}(\pi)\otimes H$ is defined by 
\[
(k\otimes h)\bullet_{\#}(k'\otimes h'):=\big(k\bullet(h_{1}\rightharpoonup k')\big)\otimes(h_{2}\bullet h')=\big(k\bullet\gamma(h_{1})\bullet k'\bullet\gamma(T(h_{2}))\big)\otimes( h_{3}\bullet h').
\]
The inverse of $\phi^{\bullet}$ is given by $(\phi^{\bullet})^{-1}:A^{\bullet}\to(\mathrm{Hker}(\pi)\otimes H)^{\bullet_{\#}}$, $a\mapsto a_{1}\bullet\gamma(\pi(T(a_{2})))\otimes\pi(a_{3})$. Then, by 2) of Lemma \ref{lem:transportingHopfbraces}, there exists a unique Hopf brace structure $(\mathrm{Hker}(\pi)\otimes H,\overline{\cdot},\bullet_{\#})$ such that $\phi^{\bullet}:(\mathrm{Hker}(\pi)\otimes H,\overline{\cdot},\bullet_{\#})\to(A,\cdot,\bullet)$ is an isomorphism in $\CHBR$. Similar computations to the ones above show that the product $\overline{\cdot}$, which is defined by $(k\otimes h)\overline{\cdot}(k'\otimes h'):=(\phi^{\bullet})^{-1}\big(\phi^{\bullet}(k\otimes h)\cdot\phi^{\bullet}(k'\otimes h')\big)$, is the one in the last statement of the Proposition.
\end{proof}

A deeper study of semi-direct products in $\CHBR$ could be undertaken. Indeed, in view of the semi-abelianness of $\CHBR$ obtained in Theorem \ref{thm:semi-ab}, one could study the categorical notions of internal action and semi-direct product for $\CHBR$. We leave this as a possible future project.

\section{Regularity of the category $\CHBR$}

In this section we show that the (finitely) complete category $\CHBR$ is regular.
For this, we need the description of cokernels of kernels in $\CHBR$. It will be shown that a cokernel of the kernel of a morphism $f:(A,\cdot,\bullet)\to(B,\cdot,\bullet)$ in $\CHBR$ is just the canonical projection  $\pi:(A,\cdot,\bullet)\to(A/\mathrm{ker}(f),\overline{\cdot},\overline{\bullet})$, where $\mathrm{ker}(f)$ denotes the vector space kernel of $f$. This can also be seen as a special case of the general construction of coequalizers given in \cite{AC}.


In order to consider  quotients in the category $\CHBR$ we introduce the notion of “ideal” for Hopf braces. 
\begin{definition}\label{def:Hopfbraceideal}
    Let $(H,\cdot,\bullet)$ be an object in $\HBR$. A \textit{Hopf brace ideal} is a vector subspace $I\subseteq H$ such that:
\begin{itemize}
    \item[1)] $H\cdot I\subseteq I$, $I\cdot H\subseteq I$, \quad $H\bullet I\subseteq I$, $I\bullet H\subseteq I$, \medskip
    \item[2)] $\Delta(I)\subseteq I\otimes H+H\otimes I$, $\varepsilon(I)=0$, \medskip
    \item[3)] $S(I)\subseteq I$, $T(I)\subseteq I$.
\end{itemize}
\end{definition}

\begin{remark}
   The properties in the previous definition mean that $I$ is a Hopf ideal with respect to both $H^{\cdot}$ and $H^{\bullet}$, hence the linear quotient $H/I$ has two Hopf algebra structures $\overline{\cdot}$ and $\overline{\bullet}$ induced from $\cdot$ and $\bullet$, respectively. The Hopf brace compatibility \eqref{eq:HBRcomp.} is induced from that of $H$, hence $(H/I,\overline{\cdot},\overline{\bullet})$ is a Hopf brace. 
\end{remark}

Recall that, given a cocommutative Hopf algebra $H$, there is a bijective correspondence due to K. Newman \cite[Theorem 4.1]{Newman} between the set of Hopf subalgebras of $H$ and the set of left ideals which are also two-sided coideals of $H$. More precisely, given a Hopf subalgebra $K$ of $H$ and a left ideal, two-sided coideal $I$ of $H$ the two inverse maps are given by
\[
\phi_{H}:K\mapsto HK^{+},\qquad  \psi_{H}:I\mapsto H^{\mathrm{co}\frac{H}{I}}:=\{x\in H\ |\ (\mathrm{Id}_{H}\otimes\pi)\Delta_{H}(x)=x\otimes\pi(1_{H})\},
\]
where $\pi:H\to H/I$ is the canonical projection. Given a morphism $f:A\to B$ in $\mathsf{Hopf}_{\Bbbk,\mathrm{coc}}$, the previous bijection is used to deduce that $A\mathrm{Hker}(f)^{+}=\mathrm{ker}(f)$, where $\mathrm{ker}(f)$ denotes the vector space kernel of $f$. Then, one obtains that $\mathsf{coker}(\mathsf{ker}(f))$ in $\mathsf{Hopf}_{\Bbbk,\mathrm{coc}}$ is given by the projection $A\to A/\mathrm{ker}(f)$.

We first show the following result, which will be very useful in the following.

\begin{lemma}\label{lem:normalsubHopfbrace}
    Given $(A,\cdot,\bullet)$ in $\CHBR$ and a sub-Hopf brace $(B,\cdot,\bullet)\subseteq(A,\cdot,\bullet)$ such that $A\rightharpoonup B\subseteq B$, then $\phi_{A^{\cdot}}(B):=A\cdot B^{+}=A\bullet B^{+}=:\phi_{A^{\bullet}}(B)$. 
\end{lemma}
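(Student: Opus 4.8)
The statement asks us to show that, for a sub-Hopf brace $(B,\cdot,\bullet)$ of a cocommutative Hopf brace $(A,\cdot,\bullet)$ satisfying $A\rightharpoonup B\subseteq B$, the two left ideals $A\cdot B^{+}$ and $A\bullet B^{+}$ coincide, where $B^{+}:=B\cap\mathrm{ker}(\varepsilon)$. The plan is to prove the two inclusions $A\cdot B^{+}\subseteq A\bullet B^{+}$ and $A\bullet B^{+}\subseteq A\cdot B^{+}$ directly, by rewriting one product in terms of the other using the identities between $\cdot$, $\bullet$ and $\rightharpoonup$ recalled in the Preliminaries, namely $a\bullet b=a_{1}\cdot(a_{2}\rightharpoonup b)$ and $a\cdot b=a_{1}\bullet(T(a_{2})\rightharpoonup b)$.

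First I would treat the inclusion $A\bullet B^{+}\subseteq A\cdot B^{+}$. Take $a\in A$ and $b\in B^{+}$; using $a\bullet b=a_{1}\cdot(a_{2}\rightharpoonup b)$ we have $a\bullet b=a_{1}\cdot(a_{2}\rightharpoonup b)$. Since $a_{2}\rightharpoonup b\in A\rightharpoonup B\subseteq B$ by hypothesis, and since $\varepsilon(a_{2}\rightharpoonup b)=\varepsilon(a_{2})\varepsilon(b)=0$ because $b\in B^{+}$ and $\rightharpoonup$ makes $A^{\cdot}$ an $A^{\bullet}$-module coalgebra (so $\varepsilon(a\rightharpoonup b)=\varepsilon(a)\varepsilon(b)$), we get $a_{2}\rightharpoonup b\in B^{+}$. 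Hence $a\bullet b=a_{1}\cdot(a_{2}\rightharpoonup b)\in A\cdot B^{+}$. Conversely, for the inclusion $A\cdot B^{+}\subseteq A\bullet B^{+}$, take $a\in A$, $b\in B^{+}$ and use $a\cdot b=a_{1}\bullet(T(a_{2})\rightharpoonup b)$. Here $T(a_{2})\in A$, so $T(a_{2})\rightharpoonup b\in A\rightharpoonup B\subseteq B$, and again $\varepsilon(T(a_{2})\rightharpoonup b)=\varepsilon(T(a_{2}))\varepsilon(b)=0$, so $T(a_{2})\rightharpoonup b\in B^{+}$ and thus $a\cdot b=a_{1}\bullet(T(a_{2})\rightharpoonup b)\in A\bullet B^{+}$.

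Putting the two inclusions together gives $A\cdot B^{+}=A\bullet B^{+}$, which is exactly the claimed equality $\phi_{A^{\cdot}}(B)=\phi_{A^{\bullet}}(B)$. The only mild subtlety to be careful about — and the place I would expect the argument to need a word of justification rather than being a one-line computation — is the Sweedler-notation manipulation: one must check that $A\cdot B^{+}$ really equals the linear span of all elements of the form $a\cdot b'$ with $a\in A$, $b'\in B^{+}$ (and similarly for $\bullet$), so that showing each generator $a\bullet b$ (resp. $a\cdot b$) lies in the other ideal is enough. This is immediate since $A\cdot B^{+}$ is by definition the image of the multiplication $A\otimes B^{+}\to A$, hence spanned by such products; the identities above then express each spanning element of one side as a spanning element of the other. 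No cocommutativity beyond what is already built into the module-coalgebra structure of $\rightharpoonup$ is needed, and all the required identities are available from the Preliminaries.
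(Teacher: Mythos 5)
Your proposal is correct and follows essentially the same argument as the paper: both directions use the identities $a\bullet b=a_{1}\cdot(a_{2}\rightharpoonup b)$ and $a\cdot b=a_{1}\bullet(T(a_{2})\rightharpoonup b)$ together with the hypothesis $A\rightharpoonup B\subseteq B$ and the left linearity of $\varepsilon$ with respect to $\rightharpoonup$. Your extra remark about spanning elements only makes explicit a point the paper leaves implicit.
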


\begin{proof}
Consider an element $a\cdot b\in A\cdot B^{+}$. Then, since $A\rightharpoonup B\subseteq B$, we have $a\cdot b=a_{1}\bullet(T(a_{2})\rightharpoonup b)\in A\bullet B$. Moreover, $\varepsilon$ is left linear with respect to $\rightharpoonup$ and then $a\cdot b\in A\bullet B^{+}$. For the same reasons, given $a\bullet b\in A\bullet B^{+}$, we have $a\bullet b=a_{1}\cdot(a_{2}\rightharpoonup b)\in A\cdot B^{+}$. It follows that $A\cdot B^{+}=A\bullet B^{+}$. 
\end{proof}

We give the following definition.

\begin{definition}\label{def:normalHopfbrace}
    A sub-Hopf brace $(B,\cdot,\bullet)$ of a Hopf brace $(A,\cdot,\bullet)$ is \textit{normal} if: 
\begin{equation}\label{eq:normaldot}
a_{1}\cdot b\cdot S(a_{2})\in B, \qquad \text{for all}\ a\in A,b\in B,
\end{equation}
\begin{equation}\label{eq:normalbullet}
    a_{1}\bullet b\bullet T(a_{2})\in B,\qquad \text{for all}\ a\in A,b\in B,
\end{equation}
\begin{equation}\label{closureunderharpoon}
    a\rightharpoonup b\in B,\qquad \text{for all}\ a\in A, b\in B. 
\end{equation}
\end{definition}

\begin{remark}
A sub-Hopf brace $(B,\cdot,\bullet)$ of $(A,\cdot,\bullet)$ is normal if $B^{\cdot}$ and $B^{\bullet}$ are normal Hopf subalgebras of $A^{\cdot}$ and $A^{\bullet}$, respectively, and $A\rightharpoonup B\subseteq B$.
\end{remark}

\begin{proposition}\label{prop:normalmono}
    Given a sub-Hopf brace $(B,\cdot,\bullet)\subseteq(A,\cdot,\bullet)$ in $\CHBR$ the following are equivalent:
\begin{itemize}
    \item[1)] $(B,\cdot,\bullet)$ is a normal sub-Hopf brace, i.e. \eqref{eq:normaldot}, \eqref{eq:normalbullet} and \eqref{closureunderharpoon} are satisfied; \medskip
    \item[2)] $A\cdot B^{+}=A\bullet B^{+}$ is a Hopf brace ideal; \medskip
    \item[3)] $(B,\cdot,\bullet)$ is the kernel of a morphism in $\CHBR$.
\end{itemize}
\end{proposition}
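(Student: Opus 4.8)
The plan is to prove the cyclic chain of implications $1) \Rightarrow 2) \Rightarrow 3) \Rightarrow 1)$, exploiting at each step the Newman correspondence recalled just before the statement together with Lemma \ref{lem:normalsubHopfbrace}.

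For $1) \Rightarrow 2)$, I would first invoke Lemma \ref{lem:normalsubHopfbrace}: since $(B,\cdot,\bullet)$ is normal, in particular $A \rightharpoonup B \subseteq B$, so the equality $A \cdot B^{+} = A \bullet B^{+}$ holds and it suffices to check that this common subspace $I$ is a Hopf brace ideal in the sense of Definition \ref{def:Hopfbraceideal}. Now $B^{\cdot}$ is a normal Hopf subalgebra of $A^{\cdot}$, so by the classical fact that $A^{\cdot} (B^{\cdot})^{+}$ is a Hopf ideal of $A^{\cdot}$ (this is exactly the content of Newman's correspondence applied to $f \colon A^{\cdot} \to A^{\cdot}/A \cdot B^{+}$, together with normality of $B^{\cdot}$), $I$ is closed under left and right $\cdot$-multiplication by $A$, satisfies $\Delta(I) \subseteq I \otimes A + A \otimes I$, $\varepsilon(I) = 0$, and $S(I) \subseteq I$; these are conditions 1) ($\cdot$-part), 2) and the $S$-part of 3). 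Symmetrically, using \eqref{eq:normalbullet} and that $B^{\bullet}$ is a normal Hopf subalgebra of $A^{\bullet}$, the same subspace $I = A \bullet B^{+}$ is closed under $\bullet$-multiplication by $A$ on both sides and $T(I) \subseteq I$, giving the $\bullet$-part of 1) and the $T$-part of 3). The coalgebra condition 2) need only be verified once. Hence $I$ is a Hopf brace ideal.

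For $2) \Rightarrow 3)$, given that $I := A \cdot B^{+} = A \bullet B^{+}$ is a Hopf brace ideal, the Remark after Definition \ref{def:Hopfbraceideal} yields a quotient Hopf brace $(A/I, \overline{\cdot}, \overline{\bullet})$ and a morphism $\pi \colon (A,\cdot,\bullet) \to (A/I,\overline{\cdot},\overline{\bullet})$ in $\CHBR$. I claim $(B,\cdot,\bullet) = \mathrm{Hker}(\pi)$ as computed in $\CHBR$, i.e. $B = \{x \in A \mid x_1 \otimes \pi(x_2) = x \otimes 1\}$. On the level of the Hopf algebra $A^{\cdot}$, the vector-space kernel of $\pi$ is $\ker(\pi) = A \cdot B^{+} = \phi_{A^{\cdot}}(B)$, and by Newman's correspondence $\psi_{A^{\cdot}}(\ker(\pi)) = (A^{\cdot})^{\mathrm{co}(A^{\cdot}/\ker\pi)} = B$; but $(A^{\cdot})^{\mathrm{co}(A/I)}$ is precisely the description of $\mathrm{Hker}(\pi)$ in $\CHBR$ recalled in the Preliminaries. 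So $\mathrm{Hker}(\pi) = B$ as vector spaces, hence as sub-Hopf braces, and since in a pointed category kernels are in particular $\mathrm{Hker}$'s, $(B,\cdot,\bullet)$ is the kernel of $\pi$ in $\CHBR$.

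For $3) \Rightarrow 1)$, suppose $(B,\cdot,\bullet) = \mathsf{ker}(f)$ for some $f \colon (A,\cdot,\bullet) \to (C,\cdot,\bullet)$ in $\CHBR$. Then $B^{\cdot} = \mathrm{Hker}(f^{\cdot})$ is a normal Hopf subalgebra of $A^{\cdot}$ and $B^{\bullet} = \mathrm{Hker}(f^{\bullet})$ is a normal Hopf subalgebra of $A^{\bullet}$ — using here that the forgetful functors $F^{\cdot}, F^{\bullet} \colon \CHBR \to \mathsf{Hopf}_{\Bbbk,\mathrm{coc}}$ preserve kernels (Remark on equalizers) and that kernels in $\mathsf{Hopf}_{\Bbbk,\mathrm{coc}}$ are normal Hopf subalgebras. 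These give \eqref{eq:normaldot} and \eqref{eq:normalbullet}. It remains to check \eqref{closureunderharpoon}, i.e. $A \rightharpoonup B \subseteq B$: for $a \in A$ and $b \in B$, using \eqref{def:leftaction} we have $a \rightharpoonup b = S(a_1) \cdot (a_2 \bullet b)$, and applying $f$ (a morphism for both structures, preserving antipodes and comultiplication) together with $b \in \mathrm{Hker}(f)$ shows that $f$ sends $a \rightharpoonup b$ into the image of $\mathrm{Hker}(f)$ under the relevant operations — more precisely one computes directly that $a \rightharpoonup b \in \mathrm{Hker}(f) = B$ in $\CHBR$ by checking the defining condition $(a \rightharpoonup b)_1 \otimes f((a\rightharpoonup b)_2) = (a \rightharpoonup b) \otimes 1_C$, using that $f(a \rightharpoonup b) = f(a) \rightharpoonup f(b)$, that $\rightharpoonup$ is a coalgebra morphism by \cite[Lemma 2.2]{AGV}, and $f(b) = \varepsilon(b) 1_C$-type identities coming from $b \in \mathrm{Hker}(f)$. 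This closes the cycle.

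The main obstacle I anticipate is the careful bookkeeping in $3) \Rightarrow 1)$, verifying the $\rightharpoonup$-closure condition \eqref{closureunderharpoon} at the level of the $\CHBR$-kernel: one must manipulate the Sweedler-notation defining equation of $\mathrm{Hker}(f)$ in $\CHBR$ and combine it correctly with the compatibility between $\rightharpoonup$ and $\Delta$, rather than simply quoting normality of the underlying Hopf subalgebras. The equivalence $1) \Leftrightarrow 2)$ is essentially a relative version of Newman's theorem applied twice and glued along $I$, and should go through smoothly once Lemma \ref{lem:normalsubHopfbrace} is in hand; the implication $2) \Rightarrow 3)$ is a direct application of Newman's correspondence together with the already-recalled description of $\mathrm{Hker}$ in $\CHBR$.
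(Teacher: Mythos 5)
Your proposal is correct and follows essentially the same route as the paper: the cycle $1)\Rightarrow 2)\Rightarrow 3)\Rightarrow 1)$, using Lemma \ref{lem:normalsubHopfbrace} for the equality $A\cdot B^{+}=A\bullet B^{+}$, the correspondence between normal Hopf subalgebras and Hopf ideals (which the paper cites as \cite[Corollary 2.3]{GSV} rather than deriving from Newman's bijection) for $1)\Leftrightarrow 3)$ restricted to each Hopf algebra structure, and a direct Sweedler computation to verify $A\rightharpoonup\mathrm{Hker}(f)\subseteq\mathrm{Hker}(f)$. The only cosmetic difference is that in $3)\Rightarrow 1)$ you shortcut via the module-coalgebra identity $\Delta(a\rightharpoonup b)=(a_{1}\rightharpoonup b_{1})\otimes(a_{2}\rightharpoonup b_{2})$ where the paper expands $\Delta(S(a_{1})\cdot(a_{2}\bullet b))$ by hand and invokes cocommutativity explicitly.
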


\begin{proof}
    $1)\implies2)$. From \eqref{eq:normaldot} and \eqref{eq:normalbullet}, using \cite[Corollary 2.3]{GSV}, we obtain, respectively, that $A\cdot B^{+}$ is a Hopf ideal of $A^{\cdot}$ and $A\bullet B^{+}$ is a Hopf ideal of $A^{\bullet}$. Moreover from \eqref{closureunderharpoon}, using Lemma \ref{lem:normalsubHopfbrace}, we obtain that $A\cdot B^{+}=A\bullet B^{+}$, which is then a Hopf brace ideal. \medskip
    
\noindent $2)\implies3)$. Suppose that $I:=A\cdot B^{+}=A\bullet B^{+}$ is a Hopf brace ideal. Hence, using \cite[Corollary 2.3]{GSV} we know that $B^{\cdot}=\mathrm{Hker}(\pi)$ with $\pi:A\to A/A\cdot B^{+}$ in $\mathsf{Hopf}_{\Bbbk,\mathrm{coc}}$ and $B^{\bullet}=\mathrm{Hker}(\pi')$ with $\pi':A^{\bullet}\to A^{\bullet}/A\bullet B^{+}$. But $A\cdot B^{+}=A\bullet B^{+}$ by assumption, so $\pi=\pi'$ and $(B,\cdot,\bullet)$ is the kernel of $\pi:(A,\cdot,\bullet)\to(A/I,\overline{\cdot},\overline{\bullet})$ in $\CHBR$. \medskip
    
\noindent $3)\implies1)$. Given a morphism $f:(A,\cdot,\bullet)\to(B,\cdot,\bullet)$ in $\CHBR$ and its kernel in $\CHBR$, which is given by the inclusion $j:(\mathrm{Hker}(f),\cdot,\bullet)\hookrightarrow(A,\cdot,\bullet)$, by \cite[Corollary 2.3]{GSV} we already know that \eqref{eq:normaldot} and \eqref{eq:normalbullet} are satisfied because $j:\mathrm{Hker}(f)^{\cdot}\hookrightarrow A^{\cdot}$ and $j:\mathrm{Hker}(f)^{\bullet}\hookrightarrow A^{\bullet}$ are kernels in $\mathsf{Hopf}_{\Bbbk,\mathrm{coc}}$. We are now going to show that $A\rightharpoonup\mathrm{Hker}(f)\subseteq\mathrm{Hker}(f)$ to conclude that $(\mathrm{Hker}(f) ,\cdot,\bullet)$ is normal. Given $a\in A$ and $b\in\mathrm{Hker}(f)$, we prove that $a\rightharpoonup b:=S(a_{1})\cdot(a_{2}\bullet b)\in\mathrm{Hker}(f)$:
\[
\begin{split}
(\mathrm{Id}\otimes f)\Delta(a\rightharpoonup b)&=\big(S(a_{1})_{1}\cdot(a_{2}\bullet b_{1})\big)\otimes f(S(a_{1})_{2}\cdot(a_{3}\bullet b_{2}))\\&
=\big(S(a_{2})\cdot(a_{3}\bullet b_{1})\big)\otimes f(S(a_{1}))\cdot(f(a_{4})\bullet f(b_{2}))\\&\overset{(!)}{=}\big(S(a_{2})\cdot(a_{3}\bullet b)\big)\otimes f(S(a_{1}))\cdot f(a_{4})\\&\overset{(!!)}{=}\big(S(a_{1})\cdot(a_{2}\bullet b)\big)\otimes f(S(a_{3})\cdot a_{4})\\&=(a\rightharpoonup b)\otimes 1, 
\end{split}
\]
where $(!)$ follows from the fact that $b\in\mathrm{Hker}(f)$ and $(!!)$ follows by cocommutativity. 
\end{proof}

\begin{proposition}\label{prop:factorization}
    Any morphism in $\CHBR$ factorizes as a normal epimorphism followed by a monomorphism. 
\end{proposition}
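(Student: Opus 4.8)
The plan is to show that an arbitrary morphism $f\colon(A,\cdot,\bullet)\to(B,\cdot,\bullet)$ in $\CHBR$ factors through the quotient $(A/\mathrm{ker}(f),\overline{\cdot},\overline{\bullet})$, where $\mathrm{ker}(f)$ denotes the underlying vector-space kernel of $f$. First I would check that $I:=\mathrm{ker}(f)$ is a Hopf brace ideal in the sense of Definition \ref{def:Hopfbraceideal}. Since the forgetful functors $F^{\cdot},F^{\bullet}$ send $f$ to morphisms of cocommutative Hopf algebras, and since $\mathrm{ker}(f)$ is the common vector-space kernel of both $f\colon A^{\cdot}\to B^{\cdot}$ and $f\colon A^{\bullet}\to B^{\bullet}$, the description recalled before Lemma \ref{lem:normalsubHopfbrace} gives $\mathrm{ker}(f)=A\cdot\mathrm{Hker}(f)^{+}=A\bullet\mathrm{Hker}(f)^{+}$ (applying the Newman correspondence to each structure and using that $\mathrm{Hker}(f)$ is the same sub-Hopf brace in both cases). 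In particular $\mathrm{ker}(f)$ is simultaneously a Hopf ideal of $A^{\cdot}$ and of $A^{\bullet}$, so conditions 1), 2), 3) of Definition \ref{def:Hopfbraceideal} hold; by the Remark following that definition the linear quotient $A/I$ inherits a Hopf brace structure $(A/I,\overline{\cdot},\overline{\bullet})$, and the canonical projection $\pi\colon(A,\cdot,\bullet)\to(A/I,\overline{\cdot},\overline{\bullet})$ is a morphism in $\CHBR$.

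Next I would produce the factorisation itself. Since $\mathrm{ker}(f)$ is precisely the vector-space kernel of the linear map underlying $f$, there is a unique linear map $\overline{f}\colon A/I\to B$ with $\overline{f}\circ\pi=f$, and $\overline{f}$ is injective as a linear map. One checks routinely that $\overline{f}$ is compatible with $\overline{\cdot}$, $\overline{\bullet}$, the comultiplication and the counit — each identity follows by precomposing with the (linearly surjective) map $\pi$ and using that $f$ and $\pi$ are morphisms of Hopf braces — so $\overline{f}$ is a morphism in $\CHBR$. An injective linear map is in particular a monomorphism in $\CHBR$, since the forgetful functor to $\Vec$ is faithful; hence $\overline{f}$ is a monomorphism. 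It remains to see that $\pi$ is a normal epimorphism, i.e. a cokernel. By Proposition \ref{prop:normalmono} applied to the normal sub-Hopf brace $(\mathrm{Hker}(f),\cdot,\bullet)$ — which is normal precisely by the implication $3)\Rightarrow 1)$ already established there, since it is the kernel of $f$ — we get that $A\cdot\mathrm{Hker}(f)^{+}=A\bullet\mathrm{Hker}(f)^{+}=I$ is a Hopf brace ideal and that $(\mathrm{Hker}(f),\cdot,\bullet)=\mathsf{Ker}(\pi)$; thus $\pi=\mathsf{coker}(\mathsf{ker}(f))$, which exhibits $\pi$ as a cokernel, hence a normal epimorphism.

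The only genuinely substantive point is the identity $A\cdot\mathrm{Hker}(f)^{+}=\mathrm{ker}(f)=A\bullet\mathrm{Hker}(f)^{+}$ linking the vector-space kernel of $f$ with the Hopf-kernel $\mathrm{Hker}(f)$ on both sides; this is exactly the content of the Newman correspondence discussion preceding Lemma \ref{lem:normalsubHopfbrace} together with Lemma \ref{lem:normalsubHopfbrace} itself (which gives $A\cdot\mathrm{Hker}(f)^{+}=A\bullet\mathrm{Hker}(f)^{+}$ once we know $A\rightharpoonup\mathrm{Hker}(f)\subseteq\mathrm{Hker}(f)$, proved in $3)\Rightarrow 1)$ of Proposition \ref{prop:normalmono}). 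Everything else is bookkeeping: verifying the ideal axioms, the universal property of the linear quotient, and the multiplicativity/comultiplicativity of $\overline{f}$. I expect no obstacle beyond keeping track of which of the two Hopf-algebra structures one is using at each step and invoking cocommutativity where the quotient coalgebra structure is needed.
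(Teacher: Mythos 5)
Your overall route is the same as the paper's: identify the vector-space kernel $\mathrm{ker}(f)$ with $A\cdot\mathrm{Hker}(f)^{+}=A\bullet\mathrm{Hker}(f)^{+}$ via the Newman correspondence, form the quotient Hopf brace $(A/\mathrm{ker}(f),\overline{\cdot},\overline{\bullet})$, and factor $f$ as the projection followed by the induced injection. All of that part is fine. The one genuine gap is the final step, where you write that since $(\mathrm{Hker}(f),\cdot,\bullet)=\mathsf{Ker}(\pi)$, ``thus $\pi=\mathsf{coker}(\mathsf{ker}(f))$''. Knowing the kernel of a morphism does not make that morphism the cokernel of its kernel: any non-invertible monomorphism has zero kernel without being the cokernel of $0$. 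What you actually need is the universal property: every $h\colon(A,\cdot,\bullet)\to(C,\cdot,\bullet)$ in $\CHBR$ with $h\circ j$ trivial must factor uniquely through $\pi$. You also cannot shortcut this by saying ``$\pi$ is surjective, hence a regular (normal) epimorphism'', because the identification of surjections with normal epimorphisms in $\CHBR$ is Corollary \ref{lem:regepi}, which is proved \emph{after} and \emph{from} this very proposition; invoking it here would be circular.

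The missing verification is short and is exactly what the paper supplies: if $h\circ j=h\circ u_{A}\circ\varepsilon_{\mathrm{Hker}(f)}$, then $h(k)=\varepsilon(k)1_{C}$ for all $k\in\mathrm{Hker}(f)$, hence $h$ annihilates $A\cdot\mathrm{Hker}(f)^{+}=\mathrm{ker}(f)$ and factors uniquely as a linear map $q$ through the linear quotient; equivalently, one uses that $p\colon A^{\cdot}\to (A/\mathrm{ker}(f))^{\overline{\cdot}}$ is already known to be $\mathsf{coker}(j)$ in $\mathsf{Hopf}_{\Bbbk,\mathrm{coc}}$ to get a unique Hopf algebra map $q$ with $q\circ p=h$, and then the surjectivity of $p$ forces $q$ to be multiplicative for $\overline{\bullet}$ as well, so $q$ is a morphism in $\CHBR$. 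With that paragraph inserted in place of the word ``thus'', your proof is complete and coincides with the paper's.
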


\begin{proof}
Consider a morphism $f:(A,\cdot,\bullet)\to(B,\cdot,\bullet)$ in $\CHBR$ and the kernel of $f$ in $\CHBR$, given by the inclusion $j:(\mathrm{Hker}(f),\cdot,\bullet)\hookrightarrow(A,\cdot,\bullet)$. By Proposition \ref{prop:normalmono} we have that $A\cdot\mathrm{Hker}(f)^{+}=A\bullet\mathrm{Hker}(f)^{+}$. Moreover, by \cite[Theorem 4.1]{Newman}, we know that the latter coincides with $\mathrm{ker}(f)$, the vector space kernel of $f$. 
We now show that the cokernel of $j:(\mathrm{Hker}(f),\cdot,\bullet)\hookrightarrow(A,\cdot,\bullet)$ in $\CHBR$ is given by the canonical projection $p:(A,\cdot,\bullet)\to(A/\mathrm{ker}(f),\overline{\cdot},\overline{\bullet})$. Indeed, $p$ coequalizes the pair of morphisms $(j,u_{A}\varepsilon_{\mathrm{Hker}(f)})$ in $\CHBR$. Suppose now that there exists a morphism $h:(A,\cdot,\bullet)\to(C,\cdot,\bullet)$ in $\CHBR$ such that $h\circ j=h\circ u_{A}\circ\varepsilon_{\mathrm{Hker}(f)}$. 
Since $p:A^{\cdot}\to A/\mathrm{ker}(f)^{\overline{\cdot}}$ is the cokernel of $j:\mathrm{Hker}(f)^{\cdot}\hookrightarrow A^{\cdot}$ in $\mathsf{Hopf}_{\Bbbk,\mathrm{coc}}$, there exists a unique morphism $q:A/\mathrm{ker}(f)^{\overline{\cdot}}\to C^{\cdot}$ in $\mathsf{Hopf}_{\Bbbk,\mathrm{coc}}$ such that $q\circ p=h$. Note that $p$ and $h$ are also morphisms of algebras with respect to $\bullet$, hence $q$ is a morphism in $\CHBR$, and the unique one such that $q\circ p=h$; so $p$ is the cokernel of $j$ in $\CHBR$. 
In particular, we obtain the following commutative diagram:
\[
\begin{tikzcd}
	(\mathrm{Hker}(f),\cdot,\bullet) & (A,\cdot,\bullet) & (\frac{A}{\mathrm{ker}(f)},\overline{\cdot},\overline{\bullet}) \\ 
	&& (B,\cdot,\bullet)
	\arrow[hook, from=1-1, to=1-2, "j"]
	\arrow[from=1-2, to=1-3, "p"]
	\arrow[from=1-2, to=2-3, "f"']
	\arrow[dotted, from=1-3, to=2-3, "i"]
\end{tikzcd}
\]
Any morphism $f$ in $\CHBR$ factorizes as $f=i\circ p$, with $p$ a normal epimorphism in $\CHBR$ and $i$ a monomorphism (an injective morphism) in $\CHBR$.
\end{proof}

We can then show the following result:

\begin{corollary}\label{lem:regepi}
    In the category $\CHBR$ of cocommutative Hopf braces:
\begin{itemize}
    \item[1)] regular epimorphisms coincide  with normal epimorphisms and with \\surjective morphisms; \medskip
    \item[2)] monomorphisms coincide with injective morphisms.
\end{itemize}
    
\end{corollary}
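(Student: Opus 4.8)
The plan is to leverage the factorization established in Proposition \ref{prop:factorization} together with the already-known structure of cokernels and kernels in $\CHBR$, reducing everything to statements about the underlying cocommutative Hopf algebras via the forgetful functors $F^{\cdot}, F^{\bullet}$. Recall that in a pointed category a regular epimorphism is always a cokernel of its own kernel pair, and a normal epimorphism is by definition a cokernel; so the content of item 1) is the coincidence of these classes with the surjective morphisms.

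First I would prove item 2). A monomorphism in $\CHBR$ is in particular a monomorphism with respect to each underlying Hopf algebra structure: indeed, if $f$ is a mono in $\CHBR$ and $g, h \colon (C,\cdot,\bullet) \to (A,\cdot,\bullet)$ satisfy $fg = fh$ (as Hopf algebra maps for $\cdot$), one can precompose with the diagonal-type comparison, but more directly one uses that $F^{\cdot}$ has a left adjoint or simply that the forgetful functor to vector spaces factors through $\mathsf{Hopf}_{\Bbbk,\mathrm{coc}}$ and the inclusion $j \colon (\mathrm{Hker}(f),\cdot,\bullet) \hookrightarrow (A,\cdot,\bullet)$ of the kernel of $f$; by the description of kernels, $f$ is a monomorphism iff $\mathrm{Hker}(f) = \Bbbk$, which by the kernel formula in $\CHBR$ holds iff $f$ is injective as a linear map (the antipode-coalgebra argument: if $h_1 \otimes f(h_2) = h \otimes 1$ then applying $S$ and multiplying gives $f(h) = \varepsilon(h)1$, so injectivity of $f$ forces $h \in \Bbbk 1$). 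Conversely an injective morphism is trivially a monomorphism. This gives 2).

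For item 1), the implications ``normal epi $\Rightarrow$ regular epi'' and ``regular epi $\Rightarrow$ normal epi'' hold in any pointed protomodular category, and $\CHBR$ is protomodular by Proposition \ref{prop:protomodularity}; alternatively, every normal epi is a cokernel hence a coequalizer hence regular, and for the converse one invokes that in a protomodular category regular epimorphisms are normal. So it remains to identify these with the surjections. A normal epimorphism $p \colon (A,\cdot,\bullet) \to (Q,\cdot,\bullet)$ is, by the proof of Proposition \ref{prop:factorization}, of the form $A \to A/\mathrm{ker}(j')$ for its kernel $j'$, and such a canonical projection is surjective as a linear map; hence normal epis are surjective. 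Conversely, given a surjective morphism $f \colon (A,\cdot,\bullet) \to (B,\cdot,\bullet)$, apply Proposition \ref{prop:factorization} to factor $f = i \circ p$ with $p$ a normal epimorphism and $i$ a monomorphism; since $f$ is surjective and $p$ is surjective (as just shown), $i$ is surjective, and by item 2) $i$ is injective, hence $i$ is an isomorphism of cocommutative Hopf braces (its inverse is automatically a Hopf brace morphism because the forgetful functors to vector spaces reflect isomorphisms, or directly: a bijective coalgebra-algebra map has coalgebra-algebra inverse). Therefore $f \cong p$ is a normal epimorphism.

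I expect the main obstacle to be the careful bookkeeping in item 2): showing that a categorical monomorphism in $\CHBR$ is injective as a linear map, rather than merely monic with respect to one of the two Hopf structures. The clean route is to observe that $(\mathrm{Hker}(f),\cdot,\bullet) \hookrightarrow (A,\cdot,\bullet)$ is the kernel of $f$ in $\CHBR$, that $f$ is a monomorphism iff this kernel is the zero object $(\Bbbk,\cdot,\cdot)$ (a standard fact in pointed protomodular categories, or provable directly here since $f$ is monic iff its kernel pair is trivial iff, composing with the split epi structure on points, the kernel vanishes), and that $\mathrm{Hker}(f) = \Bbbk$ forces injectivity of $f$ via the Sweedler/antipode computation in the formula $\mathrm{Hker}(f) = \{h \mid h_1 \otimes f(h_2) = h \otimes 1_B\}$. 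Everything else is formal, using protomodularity and the factorization system already in hand.
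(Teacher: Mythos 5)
Your treatment of item 1) follows the paper's own route essentially verbatim: regular epimorphisms coincide with normal epimorphisms by pointed protomodularity, normal epimorphisms are the canonical projections from the proof of Proposition \ref{prop:factorization} and hence surjective, and a surjective $f = i \circ p$ forces the injective monomorphism $i$ to be bijective, hence an isomorphism. That part is fine.

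There is, however, a genuine gap in item 2), precisely at the step you yourself flag as the ``main obstacle''. You reduce correctly to showing that $\mathrm{Hker}(f) = \Bbbk 1_A$ implies that $f$ is injective as a linear map, but the justification you offer --- ``applying $S$ and multiplying'' in the formula $\mathrm{Hker}(f) = \{h \mid h_1 \otimes f(h_2) = h \otimes 1_B\}$ --- proves the \emph{converse} implication. Indeed, applying $\varepsilon \otimes \mathrm{Id}$ to $h_1 \otimes f(h_2) = h \otimes 1_B$ gives $f(h) = \varepsilon(h)1_B$, which combined with injectivity of $f$ yields $h \in \Bbbk 1_A$; this shows that injective morphisms have trivial Hopf kernel, not that trivial Hopf kernel forces injectivity. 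The direction you actually need, $\mathrm{Hker}(f) = \Bbbk 1_A \Rightarrow \mathrm{ker}(f) = 0$, is not a Sweedler computation: it is the non-formal content of the corollary, and it rests on Newman's correspondence between Hopf subalgebras and left ideal two-sided coideals of a \emph{cocommutative} Hopf algebra, via the identity $\mathrm{ker}(f) = A \cdot \mathrm{Hker}(f)^{+}$ (already invoked in the proof of Proposition \ref{prop:factorization}). This is exactly how the paper concludes: $\mathrm{Hker}(f)^{+} = 0$ gives $\mathrm{ker}(f) = A \cdot \mathrm{Hker}(f)^{+} = 0$. Without cocommutativity (or some flatness hypothesis) the implication can fail, so the appeal to a purely diagrammatic antipode argument cannot be repaired locally; you need to cite the Newman/Takeuchi correspondence at this point. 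The remainder of your item 2) --- that a monomorphism in $\CHBR$ has trivial categorical kernel, either by the standard fact for pointed protomodular categories or by the direct cancellation $f \circ j = f \circ u_A \circ \varepsilon_A \circ j$ --- is correct and matches the paper.
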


\begin{proof}
In a pointed finitely complete protomodular category, regular epimorphisms and normal epimorphisms coincide (see \cite{Bourn-90}). By the proof of Proposition \ref{prop:factorization} we know that cokernels of kernels in $\CHBR$ are projections, hence surjective maps. On the other hand, if $f$ in $\CHBR$ is surjective, using the factorization $f=i\circ p$ in $\CHBR$ in Proposition \ref{prop:factorization}, we get that $i$ is surjective, hence an isomorphism in $\CHBR$. 

An injective morphism in $\CHBR$ is a monomorphism in $\CHBR$. We now prove the converse. Let $f:(A,\cdot,\bullet)\to(B,\cdot,\bullet)$ be a monomorphism in $\CHBR$ and $j:(\mathrm{Hker}(f),\cdot,\bullet)\hookrightarrow(A,\cdot,\bullet)$ the kernel of $f$ in $\CHBR$. Then the equality $f\circ j=u_{B}\circ\varepsilon_{A}\circ j=f\circ u_{A}\circ\varepsilon_{A}\circ j$ implies that $j=u_{A}\circ\varepsilon_{A}\circ j$,
since $f$ is a monomorphism in $\CHBR$. This means that $\mathrm{Hker}(f)=\Bbbk1_{A}$ and then $\mathrm{Hker}(f)^{+}=0$. Accordingly, $\mathrm{ker}(f) = A\cdot\mathrm{Hker}(f)^{+}=0$, and $f$ is injective.
\end{proof}

We recall that a functor is said to be \textit{regular} if it preserves regular epimorphisms and finite limits.

\begin{lemma}\label{lem:regfunctor}
    The forgetful functors $F^{\cdot},F^{\bullet}:\CHBR\to\mathsf{Hopf}_{\Bbbk,\mathrm{coc}}$ are regular. Moreover, they reflect regular epimorphisms. 
\end{lemma}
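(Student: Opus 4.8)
The plan is to verify the two claimed properties of the forgetful functors $F^{\cdot}$ and $F^{\bullet}$ separately, treating $F^{\cdot}$ in detail and noting that $F^{\bullet}$ is handled symmetrically by swapping the roles of $\cdot$ and $\bullet$ (and of the antipodes $S$ and $T$).

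First I would establish that $F^{\cdot}$ preserves finite limits. This is essentially already recorded in the preliminary section: the remarks following the descriptions of equalizers and binary products state that $F^{\cdot}$ preserves equalizers and binary products in $\CHBR$, and since $\CHBR$ is finitely complete (by the cited result of \cite{Ag}) and $\mathsf{Hopf}_{\Bbbk,\mathrm{coc}}$ has finite limits, preservation of equalizers and finite products gives preservation of all finite limits, in particular pullbacks. So I would simply invoke those remarks. Next, preservation of regular epimorphisms: by Corollary \ref{lem:regepi}, a regular epimorphism in $\CHBR$ is precisely a surjective morphism of Hopf braces, and likewise (by the analogous and well-known description for cocommutative Hopf algebras, e.g. from \cite{GSV}) a regular epimorphism in $\mathsf{Hopf}_{\Bbbk,\mathrm{coc}}$ is a surjective Hopf algebra map. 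Since $F^{\cdot}$ acts as the identity on underlying linear maps, a surjective morphism in $\CHBR$ is sent to a surjective morphism in $\mathsf{Hopf}_{\Bbbk,\mathrm{coc}}$, hence to a regular epimorphism. This gives regularity of $F^{\cdot}$, and the same argument applies verbatim to $F^{\bullet}$.

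Finally, for the reflection of regular epimorphisms: suppose $f\colon (A,\cdot,\bullet)\to(B,\cdot,\bullet)$ is a morphism in $\CHBR$ such that $F^{\cdot}(f)$ is a regular epimorphism in $\mathsf{Hopf}_{\Bbbk,\mathrm{coc}}$, i.e. $f$ is surjective as a map of Hopf algebras, hence surjective as a linear map. By Corollary \ref{lem:regepi}(1), surjective morphisms in $\CHBR$ are exactly the regular epimorphisms, so $f$ is a regular epimorphism in $\CHBR$. Again the same reasoning reflects regular epimorphisms through $F^{\bullet}$.

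The only mildly delicate point — and the one I would be careful to phrase correctly — is the identification of regular epimorphisms in $\mathsf{Hopf}_{\Bbbk,\mathrm{coc}}$ with surjections, which underlies both the ``preserves'' and the ``reflects'' directions; but this is standard (it follows from the description of cokernels of kernels and the protomodularity of $\mathsf{Hopf}_{\Bbbk,\mathrm{coc}}$, exactly as in the proof of Corollary \ref{lem:regepi}), so there is no real obstacle here. Everything reduces to the observation that $F^{\cdot}$ and $F^{\bullet}$ are faithful, identity-on-underlying-vector-spaces functors that preserve finite limits, together with the concrete descriptions of regular epimorphisms on both sides.
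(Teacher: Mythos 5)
Your proposal is correct and follows essentially the same route as the paper: preservation of finite limits is read off from the earlier remarks that $F^{\cdot}$ and $F^{\bullet}$ preserve equalizers and binary products, and both preservation and reflection of regular epimorphisms reduce to the identification of regular epimorphisms with surjective morphisms in $\CHBR$ (Corollary \ref{lem:regepi}) and in $\mathsf{Hopf}_{\Bbbk,\mathrm{coc}}$. Your write-up is just a more detailed version of the paper's two-line argument, with the standard fact about $\mathsf{Hopf}_{\Bbbk,\mathrm{coc}}$ correctly flagged and sourced.
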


\begin{proof}
    We already know that the forgetful functors $F^{\cdot},F^{\bullet}:\CHBR\to\mathsf{Hopf}_{\Bbbk,\mathrm{coc}}$ preserve 
    finite limits. Moreover, since regular epimorphisms in $\CHBR$ are the surjective morphisms by Corollary \ref{lem:regepi}, and this also happens in $\mathsf{Hopf}_{\Bbbk,\mathrm{coc}}$, clearly the functors $F^{\cdot}$ and $F^{\bullet}$ preserve and reflect regular epimorphisms.
\end{proof}

\begin{proposition}
The category $\CHBR$ is regular.
\end{proposition}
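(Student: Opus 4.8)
The plan is to verify the two defining conditions of a regular category: that every morphism admits a (regular epi)-mono factorisation with the regular epimorphism part being pullback-stable. The factorisation itself is already available from Proposition \ref{prop:factorization}, which produces a factorisation $f = i \circ p$ with $p$ a normal epimorphism and $i$ a monomorphism; combined with Corollary \ref{lem:regepi}, normal epimorphisms coincide with regular epimorphisms in $\CHBR$, so this is exactly a (regular epi)-mono factorisation. Since $\CHBR$ is finitely complete (this was recalled in Section 2, from \cite{Ag}), only the pullback-stability of regular epimorphisms remains to be checked.

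For pullback-stability, the strategy is to reduce to the corresponding known fact in $\mathsf{Hopf}_{\Bbbk,\mathrm{coc}}$ via the forgetful functors. First I would invoke Corollary \ref{lem:regepi}: regular epimorphisms in $\CHBR$ are precisely the surjective morphisms, and the same holds in $\mathsf{Hopf}_{\Bbbk,\mathrm{coc}}$, which is regular (being $\mathsf{Grp}(\mathsf{Coalg}_{\Bbbk,\mathrm{coc}})$, an exact — in particular regular — category, as recalled in Section 3, or see \cite{GSV}). Take a pullback square in $\CHBR$ with one leg, say $g : (B,\cdot,\bullet) \to (C,\cdot,\bullet)$, a regular epimorphism, and let $p_A : A \times_C B \to A$ be the other projection; I want $p_A$ to be a regular epimorphism, i.e. surjective. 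Applying the forgetful functor $F^{\cdot}$ (which preserves finite limits, hence pullbacks, and preserves and reflects regular epimorphisms by Lemma \ref{lem:regfunctor}), the image square is a pullback in $\mathsf{Hopf}_{\Bbbk,\mathrm{coc}}$ with $F^{\cdot}(g)$ a regular epimorphism. Because $\mathsf{Hopf}_{\Bbbk,\mathrm{coc}}$ is regular, $F^{\cdot}(p_A)$ is a regular epimorphism there; since $F^{\cdot}$ reflects regular epimorphisms, $p_A$ is a regular epimorphism in $\CHBR$. Thus regular epimorphisms are pullback-stable, and $\CHBR$ is regular.

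The only genuine subtlety — and the point I would expect to need the most care — is making sure the pullback stability really does transfer through the forgetful functor, i.e. that $F^{\cdot}$ both \emph{preserves} pullbacks and \emph{reflects} regular epimorphisms; both are supplied by Lemma \ref{lem:regfunctor} (preservation of finite limits, and reflection of regular epimorphisms), so no new computation is needed. One should also note that the regularity of $\mathsf{Hopf}_{\Bbbk,\mathrm{coc}}$ is used as an input: it follows from its identification with the category of internal groups in $\mathsf{Coalg}_{\Bbbk,\mathrm{coc}}$, or directly from the literature on cocommutative Hopf algebras \cite{GSV}. Everything else is formal, so the proof is short:

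\begin{proof}
By Proposition \ref{prop:factorization} every morphism $f$ in $\CHBR$ factorizes as $f = i \circ p$ with $p$ a normal epimorphism and $i$ a monomorphism, and by Corollary \ref{lem:regepi} the normal epimorphisms in $\CHBR$ coincide with the regular epimorphisms (equivalently, with the surjective morphisms); so every morphism admits a (regular epimorphism)-monomorphism factorization. Since $\CHBR$ is finitely complete, it remains to show that regular epimorphisms are pullback stable.

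Consider a pullback square
\[
\begin{tikzcd}
	A\times_{C}B & B \\
	A & C
	\arrow[from=1-1, to=1-2, "p_{B}"]
	\arrow[from=1-1, to=2-1, "p_{A}"']
	\arrow[from=1-2, to=2-2, "g"]
	\arrow[from=2-1, to=2-2, "f"']
\end{tikzcd}
\]
in $\CHBR$ with $g$ a regular epimorphism. Applying the forgetful functor $F^{\cdot}:\CHBR\to\mathsf{Hopf}_{\Bbbk,\mathrm{coc}}$, which preserves finite limits by Lemma \ref{lem:regfunctor}, we obtain a pullback square in $\mathsf{Hopf}_{\Bbbk,\mathrm{coc}}$ in which $F^{\cdot}(g)$ is a regular epimorphism, again by Lemma \ref{lem:regfunctor}. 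The category $\mathsf{Hopf}_{\Bbbk,\mathrm{coc}}$ of cocommutative Hopf algebras is regular, being equivalent to the category $\mathsf{Grp}(\mathsf{Coalg}_{\Bbbk,\mathrm{coc}})$ of internal groups in a (finitely complete) category, so $F^{\cdot}(p_{A})$ is a regular epimorphism in $\mathsf{Hopf}_{\Bbbk,\mathrm{coc}}$. Since $F^{\cdot}$ reflects regular epimorphisms by Lemma \ref{lem:regfunctor}, $p_{A}$ is a regular epimorphism in $\CHBR$. Hence regular epimorphisms in $\CHBR$ are pullback stable, and $\CHBR$ is regular.
\end{proof}
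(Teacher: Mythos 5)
Your proof is correct and follows essentially the same route as the paper: the (regular epi)-mono factorisation comes from Proposition \ref{prop:factorization} together with Corollary \ref{lem:regepi}, and pullback stability of regular epimorphisms is transferred from $\mathsf{Hopf}_{\Bbbk,\mathrm{coc}}$ via Lemma \ref{lem:regfunctor}. You merely spell out the transfer argument in more detail than the paper does.
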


\begin{proof}
    By Proposition \ref{prop:factorization} we know that any morphism in $\CHBR$ factorizes as a regular epimorphism followed by a monomorphism. In order to conclude that $\CHBR$ is regular we have to prove that regular epimorphisms are stable under pullbacks. This easily follows from  Lemma \ref{lem:regfunctor} and the fact that regular epimorphisms in $\mathsf{Hopf}_{\Bbbk,\mathrm{coc}}$ are stable under pullbacks (see \cite{GSV}).
\end{proof}

Since we already know that $\CHBR$ is pointed and protomodular (by Proposition \ref{prop:protomodularity}) we obtain:

\begin{theorem}\label{Homological}
The category $\CHBR$ is homological.
\end{theorem}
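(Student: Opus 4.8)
The plan is to recall the standard characterization of homological categories and to verify that $\CHBR$ meets it. By definition, a category is \emph{homological} (in the sense of Borceux--Bourn) if it is pointed, regular, and protomodular. All three conditions have, in effect, already been established in the excerpt: the category $\CHBR$ is pointed (the zero object being the trivial Hopf brace $(\Bbbk,\cdot,\cdot)$, as recorded in Section 2), it is protomodular by Proposition \ref{prop:protomodularity} (equivalently, the Split Short Five Lemma holds, and it is finitely complete by the results of \cite{Ag}), and it is regular by the immediately preceding Proposition. So the proof is essentially an assembly of these facts.

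The one genuinely needed ingredient that deserves a sentence of justification is the existence of finite limits, since regularity already presupposes finite completeness; this is guaranteed by \cite[Theorem 3.1 and Corollary 3.3]{Ag}, recalled in Section 2, together with the explicit descriptions of equalizers, binary products and pullbacks given there. Thus I would write: \emph{Proof.} The category $\CHBR$ is pointed, with zero object the trivial Hopf brace $(\Bbbk,\cdot,\cdot)$. It is finitely complete by \cite[Theorem 3.1 and Corollary 3.3]{Ag}, and protomodular by Proposition \ref{prop:protomodularity}. By the previous Proposition it is regular. A pointed, regular, protomodular category is precisely a homological category in the sense of \cite{BB}, so $\CHBR$ is homological. $\square$

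There is essentially no obstacle here: the statement is a formal consequence of the three properties verified in the preceding sections, and the only thing to be careful about is citing the right combination of earlier results (pointedness from Section 2, finite completeness from \cite{Ag}, protomodularity from Proposition \ref{prop:protomodularity}, regularity from the Proposition just proved) and the definitional fact that pointed + regular + protomodular $=$ homological. If one wanted to add value, one could remark that, as a consequence, $\CHBR$ automatically enjoys the basic diagram lemmas available in any homological category — the short five lemma, the nine lemma, and the snake lemma — but this is a corollary rather than part of the proof.
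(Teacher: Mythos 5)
Your proof is correct and follows exactly the paper's route: the theorem is stated immediately after the regularity proposition, with the paper simply noting that $\CHBR$ is pointed and protomodular (Proposition \ref{prop:protomodularity}) and regular (the preceding proposition), hence homological. Your additional care in noting that finite completeness comes from \cite{Ag} is a harmless and accurate elaboration of the same argument.
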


\section{The category $\CHBR$ is semi-abelian}

It is shown by Agore and Chirvăsitu in \cite{AC} that the category $\CHBR$ has (binary) coproducts. Accordingly, by using the equivalent characterization for semi-abelian categories given in \cite[3.7]{JMT}, we deduce that $\CHBR$ is semi-abelian provided that the direct image of a kernel is again a kernel.

\begin{lemma}\label{lem:imagekernel}
     The direct image of a kernel in $\CHBR$ is a kernel in $\CHBR$.
\end{lemma}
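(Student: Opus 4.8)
The plan is to reduce the statement to the already-established description of normal sub-Hopf braces (Proposition \ref{prop:normalmono}), namely that a sub-Hopf brace $(B,\cdot,\bullet)$ of $(A,\cdot,\bullet)$ is a kernel in $\CHBR$ precisely when conditions \eqref{eq:normaldot}, \eqref{eq:normalbullet} and \eqref{closureunderharpoon} hold. So, given a kernel $k\colon (K,\cdot,\bullet)\hookrightarrow (A,\cdot,\bullet)$ and a regular epimorphism $p\colon (A,\cdot,\bullet)\to (Q,\cdot,\bullet)$ in $\CHBR$, I would take the image factorization of $p\circ k$, obtaining $p\circ k = m\circ e$ with $e$ a regular epimorphism and $m$ a monomorphism, and set $(B,\cdot,\bullet)$ to be the sub-Hopf brace of $(Q,\cdot,\bullet)$ which is the domain of $m$; by Corollary \ref{lem:regepi}, $m$ is just the inclusion of the linear image $p(k(K))\subseteq Q$, which is a sub-Hopf brace because $p$ is a morphism of Hopf braces. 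The task is then to verify that this $(B,\cdot,\bullet)$ satisfies the three normality conditions.

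First I would verify \eqref{eq:normaldot} and \eqref{eq:normalbullet}. Here I would invoke the fact, recalled in the preliminaries (via \cite{GSV}) and used repeatedly, that the forgetful functors $F^{\cdot}, F^{\bullet}\colon \CHBR\to\mathsf{Hopf}_{\Bbbk,\mathrm{coc}}$ preserve finite limits and regular epimorphisms (Lemma \ref{lem:regfunctor}). Since $K^{\cdot}$ is a kernel in $\mathsf{Hopf}_{\Bbbk,\mathrm{coc}}$ and the direct image of a kernel is a kernel in the semi-abelian category $\mathsf{Hopf}_{\Bbbk,\mathrm{coc}}$, the image $B^{\cdot}$ of $K^{\cdot}$ along $p^{\cdot}$ is a normal Hopf subalgebra of $Q^{\cdot}$, which is exactly \eqref{eq:normaldot}; the same argument with $\bullet$ in place of $\cdot$ gives \eqref{eq:normalbullet}. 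Thus the only genuinely new thing to check is \eqref{closureunderharpoon}, i.e.\ that $Q\rightharpoonup B\subseteq B$.

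The hard part will be condition \eqref{closureunderharpoon}. Here the point is that the action $\rightharpoonup$ is not a separate piece of structure but is determined by $\cdot$, $\bullet$ and the antipodes via $a\rightharpoonup b = S(a_{1})\cdot(a_{2}\bullet b)$, and crucially the surjections $p$, and hence $p\otimes p$, are compatible with everything in sight. Concretely, given $q\in Q$ and $b\in B$, choose $a\in A$ with $p(a)=q$ and $x\in K$ with $p(k(x))=b$; then
\[
q\rightharpoonup b = S(p(a)_{1})\cdot\big(p(a)_{2}\bullet p(k(x))\big) = p\big(S(a_{1})\cdot(a_{2}\bullet k(x))\big) = p\big(a\rightharpoonup k(x)\big),
\]
using that $p$ preserves $\cdot$, $\bullet$, the antipode and the comultiplication. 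Since $(K,\cdot,\bullet)$ is normal in $(A,\cdot,\bullet)$ by hypothesis, $a\rightharpoonup k(x)\in k(K)$, whence $q\rightharpoonup b\in p(k(K))=B$. This establishes \eqref{closureunderharpoon}; together with \eqref{eq:normaldot} and \eqref{eq:normalbullet} and Proposition \ref{prop:normalmono}, it shows that $(B,\cdot,\bullet)$ is the kernel of the canonical projection $Q\to Q/(Q\cdot B^{+})$ in $\CHBR$, i.e.\ the direct image of $k$ along $p$ is a kernel in $\CHBR$, as desired. The only subtlety worth a sentence of care is the choice of representatives and the implicit use of Sweedler notation under $p$, which is legitimate precisely because $p$ is a coalgebra morphism so that $p(a)_{1}\otimes p(a)_{2} = p(a_{1})\otimes p(a_{2})$.
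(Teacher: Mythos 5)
Your proposal is correct and follows essentially the same route as the paper: both reduce the claim to the characterization of kernels as normal sub-Hopf braces (Proposition \ref{prop:normalmono}), obtain conditions \eqref{eq:normaldot} and \eqref{eq:normalbullet} from the corresponding stability of normal Hopf subalgebras under direct images in $\mathsf{Hopf}_{\Bbbk,\mathrm{coc}}$, and verify \eqref{closureunderharpoon} by the identical computation $q\rightharpoonup p(d)=p(a\rightharpoonup d)\in p(D)$ using that $p$ is a coalgebra and two-fold algebra morphism. The only cosmetic difference is that you phrase the reduction via the image factorization of $p\circ k$, while the paper works directly with the subspace $p(D)$.
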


\begin{proof}
    Let $p:(A,\cdot,\bullet)\to(B,\cdot,\bullet)$ be a surjective morphism in $\CHBR$ and $(D,\cdot,\bullet)$ a normal sub-Hopf brace of $(A,\cdot,\bullet)$ in the sense of Definition \ref{def:normalHopfbrace}. We prove that $(p(D),\cdot,\bullet)$ is a normal sub-Hopf brace of $(B,\cdot,\bullet)$. Since $p:A^{\cdot}\to B^{\cdot}$ and $p:A^{\bullet}\to B^{\bullet}$ are surjective morphisms in $\mathsf{Hopf}_{\Bbbk,\mathrm{coc}}$ and $D^{\cdot}$ and $D^{\bullet}$ are normal Hopf subalgebras of $A^{\cdot}$ and $A^{\bullet}$, respectively, we get that $p(D^{\cdot})$ and $p(D^{\bullet})$ are normal Hopf subalgebras of $B^{\cdot}$ and $B^{\bullet}$, respectively, using \cite[Lemma 2.7 (i)]{GSV}. Using $A\rightharpoonup D\subseteq D$, it just remains to prove $B\rightharpoonup p(D)\subseteq p(D)$. Given
    $p(d)\in p(D)$ and
    $b\in B$, which we rewrite as $b=p(a)$, for an $a\in A$, by the surjectivity of $p$, we have
\[
\begin{split}
    b\rightharpoonup p(d)=S(p(a)_{1})\cdot(p(a)_{2}\bullet p(d))=p(S(a_{1})\cdot(a_{2}\bullet d))=p(a\rightharpoonup d)\in p(D)
\end{split}
\]
since $p$ is a morphism in $\CHBR$ and $A\rightharpoonup D\subseteq D$.
\end{proof}

Accordingly, we now get one of the main results of this paper:

\begin{theorem}\label{thm:semi-ab}
    The category $\CHBR$ is semi-abelian.
\end{theorem}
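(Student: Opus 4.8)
The plan is to assemble the pieces already established in the excerpt and invoke the standard definition of semi-abelianness, Definition \ref{semi-abelian}: a pointed category is semi-abelian if it is exact, protomodular and has binary coproducts. Protomodularity is Proposition \ref{prop:protomodularity} and the category is pointed (shown in Section 2). The existence of binary coproducts is quoted from \cite{AC}. What remains is Barr-exactness, and by Theorem \ref{Homological} we already know $\CHBR$ is homological, hence in particular regular; so the only genuinely new ingredient is that every internal equivalence relation is a kernel pair (effectiveness). Alternatively, I would follow the route the paper signposts: use the characterization of \cite[3.7]{JMT}, which says that a pointed, regular, protomodular category with binary coproducts is semi-abelian as soon as the direct (regular) image of a kernel along a regular epimorphism is again a kernel — and that is exactly Lemma \ref{lem:imagekernel}. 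So the proof is essentially a one-line deduction: combine Proposition \ref{prop:protomodularity}, Theorem \ref{Homological}, the coproducts from \cite{AC}, and Lemma \ref{lem:imagekernel}.

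More concretely, the first step is to recall that $\CHBR$ is pointed, finitely complete (from \cite{Ag}), protomodular (Proposition \ref{prop:protomodularity}) and regular (the Proposition just before Theorem \ref{Homological}), i.e. homological. The second step is to add binary coproducts, citing Agore–Chirvăsitu \cite{AC}. The third step is to verify the extra exactness condition: by \cite[Proposition 3.7]{JMT}, a pointed protomodular regular category with finite coproducts is semi-abelian precisely when "kernels are closed under direct images", which is Lemma \ref{lem:imagekernel}. Concluding, $\CHBR$ satisfies all clauses of Definition \ref{semi-abelian} (equivalently, all clauses of the \cite{JMT} characterization), hence it is semi-abelian. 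It is also worth remarking, as the introduction does, that via the equivalence of \cite{AGV} the category of matched pairs of actions on cocommutative Hopf algebras is therefore semi-abelian too.

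If instead one wants to prove Barr-exactness directly rather than through \cite[3.7]{JMT}, the step to carry out is: take an internal equivalence relation $(R,r_1,r_2)$ on an object $(A,\cdot,\bullet)$ in $\CHBR$; since the forgetful functors $F^{\cdot},F^{\bullet}\colon\CHBR\to\mathsf{Hopf}_{\Bbbk,\mathrm{coc}}$ preserve and reflect finite limits (hence create internal equivalence relations), and since $\mathsf{Hopf}_{\Bbbk,\mathrm{coc}}$ is known to be exact \cite{GSV}, each of $F^{\cdot}(R)$ and $F^{\bullet}(R)$ is the kernel pair of some regular epimorphism in $\mathsf{Hopf}_{\Bbbk,\mathrm{coc}}$; one then checks that the corresponding quotient carries a compatible second Hopf-algebra structure (using that $\CHBR$ is homological so that the normalization $N$ of $R$ is a normal sub-Hopf brace, and then using Proposition \ref{prop:normalmono} to realize $N$ as a kernel in $\CHBR$), whence $R$ is the kernel pair of that quotient map in $\CHBR$. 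In a homological category this equivalently amounts to showing every monomorphism that is a normalized equivalence relation is normal, which again reduces to Proposition \ref{prop:normalmono}. Either way, the main obstacle — and the reason the paper isolates Lemma \ref{lem:imagekernel} — is precisely this effectiveness/normality bookkeeping: ensuring that the $\bullet$-structure descends compatibly along the $\cdot$-quotient and that the harpoon action restricts to images; everything else is a formal invocation of Proposition \ref{prop:protomodularity}, Theorem \ref{Homological}, and \cite{AC}.

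\begin{proof}
The category $\CHBR$ is pointed and finitely complete, and it is protomodular by Proposition \ref{prop:protomodularity} and regular by the previous Proposition, hence it is homological (Theorem \ref{Homological}). By \cite{AC} it has binary coproducts. By Lemma \ref{lem:imagekernel} the direct image of a kernel along a regular epimorphism is again a kernel, so the equivalent characterization of semi-abelian categories given in \cite[3.7]{JMT} applies: a pointed, regular, protomodular category with finite coproducts in which kernels are stable under direct images is exact, and therefore semi-abelian. Consequently $\CHBR$ is semi-abelian.
\end{proof}
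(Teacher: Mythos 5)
Your proof is correct and follows essentially the same route as the paper: the authors likewise invoke the binary coproducts from \cite{AC}, the homological structure already established, and the characterization of \cite[3.7]{JMT}, with Lemma \ref{lem:imagekernel} supplying the only remaining hypothesis that direct images of kernels are kernels.
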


Note that the existence of general coequalizers in $\CHBR$ follows from Theorem \ref{thm:semi-ab}, see e.g. \cite[Proposition 5.1.3]{BB}. We refer the reader to \cite{AC}, where an explicit description of coequalizers in $\CHBR$ was first given. 


\begin{remark}
Observe that the full subcategory of $\CHBR$ whose objects are cocommutative Hopf braces with $\cdot=\bullet$ (the trivial Hopf braces) is closed under finite limits, regular quotients and binary coproducts, hence it is semi-abelian. This category is isomorphic to the category $\mathsf{Hopf}_{\Bbbk,\mathrm{coc}}$, thus we recover \cite[Theorem 2.10]{GSV}.  
\end{remark}

As a consequence of Theorem \ref{thm:semi-ab} we obtain the following result:

\begin{corollary}
    The category $\mathsf{Ab}(\CHBR)$ of abelian objects is abelian.
\end{corollary}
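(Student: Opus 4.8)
The plan is to deduce the corollary from Theorem \ref{thm:semi-ab} by purely categorical means, exploiting the fact that the category of abelian objects of a semi-abelian category is always abelian. First I would recall that, since $\CHBR$ is pointed and protomodular, an object is abelian precisely when it admits an internal abelian group structure, and that $\mathsf{Ab}(\CHBR)$ is then an \emph{additive} category: each abelian object carries a \emph{unique} such structure, every morphism of $\CHBR$ automatically preserves it (so the hom-sets become abelian groups and composition is bilinear), and in a pointed protomodular category finite products of abelian objects are biproducts. This step is essentially a citation to \cite{BB} and requires no computation specific to Hopf braces.

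Next I would check that $\mathsf{Ab}(\CHBR)$ is a Birkhoff subcategory of $\CHBR$, that is, a full reflective subcategory closed under subobjects and regular quotients. Closure under sub-Hopf braces and under regular (= surjective) quotients is immediate, since an internal abelian group structure restricts along monomorphisms and descends along regular epimorphisms; reflectivity is witnessed by the abelianization functor $A \mapsto A/[A,A]$, which is available because $\CHBR$ is semi-abelian, hence equipped with Huq commutators and the relevant colimits. Once this is in place, the general fact that a Birkhoff subcategory of a semi-abelian category is again semi-abelian applies: $\mathsf{Ab}(\CHBR)$ contains the zero object, is closed under finite limits (hence protomodular, protomodularity being inherited by full subcategories closed under finite limits), is closed under regular quotients and binary coproducts, and is Barr-exact. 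Thus $\mathsf{Ab}(\CHBR)$ is semi-abelian. Finally, a category that is simultaneously semi-abelian and additive is abelian (\cite{BB}): it is then pointed, Barr-exact, finitely complete and cocomplete, protomodular and additive, which is one of the standard characterizations of abelian categories. Combining the two observations yields that $\mathsf{Ab}(\CHBR)$ is abelian.

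I expect the main obstacle to be not the abstract deduction but making the Birkhoff-subcategory step self-contained for the reader: one must invoke (or re-prove) both that $\mathsf{Ab}$ of a semi-abelian category is reflective via abelianization and that Birkhoff subcategories of semi-abelian categories are semi-abelian. If one prefers to avoid abelianization altogether, an alternative route is to verify directly that $\mathsf{Ab}(\CHBR)$ has cokernels, computed as in $\CHBR$ (a quotient of a commutative and cocommutative Hopf algebra by a sub-Hopf brace ideal is again commutative and cocommutative), and then to conclude from ``additive $+$ finitely complete $+$ finitely cocomplete $+$ every monomorphism a kernel $+$ every epimorphism a cokernel'' that the category is abelian; here the first three properties follow from the additive structure together with the semi-abelianness of $\CHBR$, and the last two from Corollary \ref{lem:regepi} and protomodularity.
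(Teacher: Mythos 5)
Your proposal is correct, but it is considerably more elaborate than what the paper does: the paper's entire proof is the single sentence that the category of abelian (group) objects in an exact category is always abelian, i.e.\ a direct appeal to the classical Barr-exactness result, with no Hopf-brace-specific content and no mention of protomodularity. Your route instead unpacks a proof of (a version of) that general theorem: additivity of $\mathsf{Ab}(\Cc)$ for $\Cc$ pointed protomodular, the Birkhoff property of $\mathsf{Ab}(\CHBR)$, stability of semi-abelianness under passage to Birkhoff subcategories, and finally ``additive $+$ semi-abelian $\Rightarrow$ abelian''. This is a legitimate alternative and has the side benefit of establishing along the way that $\mathsf{Ab}(\CHBR)$ is a Birkhoff subcategory --- a fact the paper proves separately, much later, as a corollary of the description of the Huq commutator (Proposition \ref{HCommutator}); but it costs you more machinery (protomodularity rather than bare exactness) and several auxiliary citations. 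One small caution: the justification that closure under subobjects is ``immediate, since an internal abelian group structure restricts along monomorphisms'' is too glib --- a subobject of an internal abelian group need not be closed under the group operations; the correct statement is that in a Mal'cev (a fortiori protomodular) category subobjects of abelian objects are abelian, which is proved via the characterization of abelian objects (e.g.\ normality of the diagonal, as in Proposition \ref{abelian}) rather than by literal restriction of the structure. With that reference supplied, the argument is complete.
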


\begin{proof}
The category of abelian objects in an exact category is always abelian.
\end{proof}

In the next subsection we give an explicit description of the category $\mathsf{Ab}(\CHBR)$. 

\subsection{Abelian objects in $\CHBR$} In \cite[Proposition 9]{Bourn} it is proved that an object $C$ in a semi-abelian category $\Cc$ is abelian if and only if its diagonal $\langle\mathrm{Id}_{C},\mathrm{Id}_{C}\rangle:C\to C\times C$ is a kernel in $\Cc$. We can then prove the following result.

\begin{proposition}\label{abelian}
    The category $\mathsf{Ab}(\CHBR)$ is given by those cocommutative Hopf braces $(H,\cdot,\bullet)$ such that $\cdot=\bullet$ is abelian, i.e. it is the category of trivial cocommutative Hopf braces with commutative product.
\end{proposition}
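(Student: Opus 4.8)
The strategy is to use Bourn's criterion quoted just before the statement: $(H,\cdot,\bullet)$ is abelian in $\CHBR$ if and only if the diagonal $\langle \mathrm{Id}_H,\mathrm{Id}_H\rangle\colon H\to H\times H$ is a kernel, equivalently (by Proposition \ref{prop:normalmono}) a normal sub-Hopf brace of the product $(H\otimes H,\cdot_\otimes,\bullet_\otimes)$. Recall that the product in $\CHBR$ is the tensor product, so the diagonal is the coalgebra map $\delta\colon H\to H\otimes H$, $\delta=\Delta_H$ (as $H$ is cocommutative), whose image is the sub-Hopf brace $\delta(H)$. So the whole proof reduces to unwinding what the three normality conditions \eqref{eq:normaldot}, \eqref{eq:normalbullet}, \eqref{closureunderharpoon} say for $\delta(H)\subseteq H\otimes H$, and checking they force $\cdot=\bullet$ and both operations commutative; conversely, if $(H,\cdot,\cdot)$ is a trivial Hopf brace with commutative $\cdot$, then $\CHBR$ restricted to such objects is (isomorphic to) the category of commutative cocommutative Hopf algebras, which is an abelian category, so every object is abelian there — but one must still check abelianness is computed the same way in $\CHBR$; this follows because $\mathsf{Ab}(\CHBR)$ is a full subcategory closed under the relevant constructions, or more directly by verifying the diagonal is a kernel in that case.

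**Key steps, in order.** First I would record that for cocommutative $H$ the diagonal $H\to H\times H=H\otimes H$ is precisely $\Delta_H$, and identify its image $\delta(H)=\{h_1\otimes h_2 : h\in H\}$ as a sub-Hopf brace. Second, I would compute the left action $\rightharpoonup$ of $H\otimes H$ on itself (via \eqref{def:leftaction} applied to the product Hopf brace, whose operations $\cdot_\otimes,\bullet_\otimes$ and antipodes $S\otimes S$, $T\otimes T$ are recalled in the monoidal-structure section) and impose condition \eqref{closureunderharpoon}: $(a\otimes b)\rightharpoonup(h_1\otimes h_2)\in\delta(H)$ for all $a,b,h\in H$. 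A convenient special case is $a=b=1$, giving $(1\otimes 1)\rightharpoonup(h_1\otimes h_2)=(S\otimes S)(1\otimes 1)\cdot_\otimes((1\otimes 1)\bullet_\otimes(h_1\otimes h_2))$; but the real force comes from choosing $a=1$ and $b$ arbitrary (or vice versa), which should decouple the two tensor factors and produce an identity relating $\cdot$, $\bullet$ and $\rightharpoonup$ on $H$ itself. Third, I would impose \eqref{eq:normaldot} and \eqref{eq:normalbullet}: for all $x\otimes y\in H\otimes H$ and $h\in H$, $(x\otimes y)_1\cdot_\otimes(h_1\otimes h_2)\cdot_\otimes(S\otimes S)((x\otimes y)_2)\in\delta(H)$, and similarly for $\bullet$. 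Taking $x\otimes y$ of the form $a\otimes 1$ and $1\otimes b$ separately should yield that $a_1\cdot h\cdot S(a_2)$ and $b_1\cdot h\cdot S(b_2)$ both lie in a form forcing $\cdot$ to be commutative (the classical fact that a cocommutative Hopf algebra whose diagonal is normal is commutative — this is exactly how $\mathsf{Ab}(\mathsf{Hopf}_{\Bbbk,\mathrm{coc}})$ is computed in \cite{GSV}), and likewise $\bullet$ commutative. Fourth, combine with the identities from step two to force $\cdot=\bullet$: once both are commutative, the Hopf brace compatibility \eqref{eq:HBRcomp.} collapses — e.g. $a\bullet(b\cdot c)=(a_1\bullet b)\cdot S(a_2)\cdot(a_3\bullet c)$ with $b=c=1$ gives $a=a_1\cdot S(a_2)\cdot a_3$ automatically, but with $b=1$ it gives $a\bullet c=a_1\cdot S(a_2)\cdot(a_3\bullet c)$, and cocommutativity plus commutativity of $\cdot$ reduce the right side, ultimately yielding $a\bullet c$ expressed through $\cdot$ and $\rightharpoonup$; pushing the action $\rightharpoonup=S(a_1)\cdot(a_2\bullet b)$ through should close the loop and give $\bullet=\cdot$. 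Finally, for the converse, note that a trivial Hopf brace $(H,\cdot,\cdot)$ with $\cdot$ commutative has $\rightharpoonup$ trivial ($a\rightharpoonup b=\varepsilon(a)b$), so $\delta(H)$ is visibly closed under all three normality conditions (they reduce to the statement that $\delta(H)$ is a normal Hopf subalgebra of the commutative Hopf algebra $H\otimes H$, which is automatic), hence the diagonal is a kernel and the object is abelian.

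**Main obstacle.** The routine parts are the bookkeeping with Sweedler notation in the product Hopf brace and invoking the known description of $\mathsf{Ab}(\mathsf{Hopf}_{\Bbbk,\mathrm{coc}})$ from \cite{GSV}. The genuinely delicate step is extracting $\cdot=\bullet$ from the normality of the diagonal: conditions \eqref{eq:normaldot}–\eqref{closureunderharpoon} are a priori about membership in $\delta(H)$ inside $H\otimes H$, and one must find the right test elements $x\otimes y$ so that, after applying $\varepsilon$ to one tensor factor (using that $\delta(H)$ consists exactly of elements $z$ with $(\mathrm{Id}\otimes\varepsilon)\Delta_{H\otimes H}(z)=z$ supported diagonally), the resulting identities in $H$ are strong enough — individually each gives commutativity of one operation, but forcing the two operations to coincide requires combining the action-closure condition with the compatibility \eqref{eq:HBRcomp.} in a slightly non-obvious way. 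An alternative, possibly cleaner, route to this step is to observe that an abelian object in $\CHBR$ is in particular an internal abelian group object, so it carries an internal commutative monoid structure in $\CHBR$ (a second ``addition'' that is a Hopf brace morphism $H\times H\to H$); analyzing such a morphism $H\otimes H\to H$ that is a two-sided unit-compatible map should, by an Eckmann–Hilton-type argument combined with the explicit product formulas $\cdot_\otimes$, $\bullet_\otimes$, pin down $\cdot=\bullet$ and commutativity simultaneously. I would try the direct normality computation first and fall back on the Eckmann–Hilton argument if the test-element juggling becomes unwieldy.
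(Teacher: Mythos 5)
Your proposal is correct and follows essentially the same route as the paper: Bourn's criterion, reduction via Proposition \ref{prop:normalmono} to normality of $\mathrm{Im}(\Delta)$ in $(H\otimes H,\cdot_\otimes,\bullet_\otimes)$, the result of \cite{GSV} for commutativity of each product, and analysis of the closure condition $(H\otimes H)\rightharpoonup\mathrm{Im}(\Delta)\subseteq\mathrm{Im}(\Delta)$. The ``delicate step'' you flag is resolved exactly as you anticipate in your direct route: applying $\Delta$ and counits to $(a\rightharpoonup x_1)\otimes(b\rightharpoonup x_2)\in\mathrm{Im}(\Delta)$ yields $\varepsilon(a)(b\rightharpoonup x)=\varepsilon(b)(a\rightharpoonup x)$, hence the action is trivial, and triviality of $\rightharpoonup$ is equivalent to $\cdot=\bullet$ simply via $a\bullet b=a_1\cdot(a_2\rightharpoonup b)$, with no need for the Eckmann--Hilton fallback or for invoking \eqref{eq:HBRcomp.}.
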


\begin{proof}
The binary product of any $(C,\cdot,\bullet)\in\CHBR$ with itself is given by $(C\otimes C,\cdot_{\otimes},\bullet_{\otimes})$ and $\langle\mathrm{Id}_{C},\mathrm{Id}_{C}\rangle=(\mathrm{Id}_{C}\otimes\mathrm{Id}_{C})\Delta_{C}=\Delta_{C}$. Hence $\mathsf{Ab}(\CHBR)$ is given by those cocommutative Hopf braces $(C,\cdot,\bullet)$ such that $\Delta_{C}$ is a kernel in $\CHBR$. But $\Delta_{C}$ is injective so, writing $\Delta_{C}=i\circ\Delta'_{C}$ where $\Delta'_{C}:(C,\cdot,\bullet)\to(\mathrm{Im}(\Delta),\cdot_{\otimes},\bullet_{\otimes})$ is an isomorphism in $\CHBR$ and $i:(\mathrm{Im}(\Delta),\cdot_{\otimes},\bullet_{\otimes})\hookrightarrow(C\otimes C,\cdot_{\otimes},\bullet_{\otimes})$ is the inclusion, then $\Delta_{C}$ is a kernel in $\CHBR$ if and only if $i$ is a kernel in $\CHBR$. By Proposition \ref{prop:normalmono} we have that $i:(\mathrm{Im}(\Delta),\cdot_{\otimes},\bullet_{\otimes})\hookrightarrow(C\otimes C,\cdot_{\otimes},\bullet_{\otimes})$ is a kernel in $\CHBR$ if and only if $\mathrm{Im}(\Delta)$ is a normal sub-Hopf brace (Definition \ref{def:normalHopfbrace}), i.e. $\mathrm{Im}(\Delta)^{\cdot_{\otimes}}$ and $\mathrm{Im}(\Delta)^{\bullet_{\otimes}}$ are normal Hopf subalgebras of $(C\otimes C)^{\cdot_{\otimes}}$ and $(C\otimes C)^{\bullet_{\otimes}}$, respectively, and $(C\otimes C)\rightharpoonup\mathrm{Im}(\Delta)\subseteq\mathrm{Im}(\Delta)$. Hence $(H,\cdot,\bullet)$ is in $\mathsf{Ab}(\CHBR)$ if and only if $\cdot$ and $\bullet$ are abelian (see \cite[Theorem 2.11]{GSV} and \cite{VW}) and $(C\otimes C)\rightharpoonup\mathrm{Im}(\Delta)\subseteq\mathrm{Im}(\Delta)$. Given $a,b\in C$ and $\Delta(x)=x_{1}\otimes x_{2}\in\mathrm{Im}(\Delta)$, we obtain
\begin{equation}\label{eq:normaldelta}
\begin{split}
    (a\otimes b)\rightharpoonup(x_{1}\otimes x_{2})&=S^{\otimes}((a\otimes b)_{1})\cdot_{\otimes}((a\otimes b)_{2}\bullet_{\otimes}(x_{1}\otimes x_{2}))\\&=(S(a_{1})\otimes S(b_{1}))\cdot_{\otimes}((a_{2}\bullet x_{1})\otimes(b_{2}\bullet x_{2}))\\&=(S(a_{1})\cdot(a_{2}\bullet x_{1}))\otimes(S(b_{1})\cdot(b_{2}\bullet x_{2}))\\&=(a\rightharpoonup x_{1})\otimes(b\rightharpoonup x_{2}).
\end{split}
\end{equation}
Suppose that $(C\otimes C)\rightharpoonup\mathrm{Im}(\Delta)\subseteq\mathrm{Im}(\Delta)$ is satisfied. For any $a,b,x\in C$, there exists $d\in D$ such that $(a\rightharpoonup x_{1})\otimes(b\rightharpoonup x_{2})=d_{1}\otimes d_{2}$. Hence, since $\Delta$ is left linear with respect to $\rightharpoonup$, we obtain $(a\rightharpoonup x_{1})\otimes(b_{1}\rightharpoonup x_{2})\otimes(b_{2}\rightharpoonup x_{3})=d_{1}\otimes d_{2}\otimes d_{3}$ and, by applying $\varepsilon\otimes\mathrm{Id}_{C\otimes C}$ and by using that $\varepsilon$ is left linear with respect to $\rightharpoonup$, we get $\varepsilon(a)\Delta(b\rightharpoonup x)=d_{1}\otimes d_{2}=(a\rightharpoonup x_{1})\otimes(b\rightharpoonup x_{2})$. Hence, by applying $\mathrm{Id}\otimes\varepsilon$, we get $\varepsilon(a)b\rightharpoonup x=\varepsilon(b)a\rightharpoonup x$, for any $a,b,x\in C$. For $a=1$, one gets $b\rightharpoonup x=\varepsilon(b)x$, for any $b,x\in C$, so that $\rightharpoonup$ is the trivial action.
On the other hand, if the action $\rightharpoonup$ is trivial, then
\[
(a\otimes b)\rightharpoonup(x_{1}\otimes x_{2})\overset{\eqref{eq:normaldelta}}{=}(a\rightharpoonup x_{1})\otimes(b\rightharpoonup x_{2})=\varepsilon(ab)x_{1}\otimes x_{2}\in\mathrm{Im}(\Delta), 
\]
and the condition $(C\otimes C)\rightharpoonup\mathrm{Im}(\Delta)\subseteq\mathrm{Im}(\Delta)$ is satisfied. It follows that $$(C\otimes C)\rightharpoonup\mathrm{Im}(\Delta)\subseteq\mathrm{Im}(\Delta)$$ holds if and only if the action $\rightharpoonup$ is trivial. Finally, if $\rightharpoonup$ is trivial then clearly $\cdot=\bullet$ while, if $a\cdot b=a\bullet b=a_{1}\cdot(a_{2}\rightharpoonup b)$, then
\[
\varepsilon(a)b=S(a_{1})\cdot a_{2}\cdot b=S(a_{1})\cdot a_{2}\cdot(a_{3}\rightharpoonup b)=a\rightharpoonup b,
\]
showing that the action $\rightharpoonup$ is trivial.
\end{proof}
It follows that the category $\mathsf{Ab}(\CHBR)$ is isomorphic to the category of commutative and cocommutative Hopf algebras over $\Bbbk$ which is known to be abelian \cite{Takeuchi}, and coincides with the category $\mathsf{Ab}(\mathsf{Hopf}_{\Bbbk,\mathrm{coc}})$ (see \cite[Theorem 2.11]{GSV}).

\subsection{1-cocycles and matched pairs of actions}

From Theorem \ref{thm:semi-ab}, one can deduce the semi-abelianness of two other categories, namely the categories of 1-cocycles and of matched pairs of actions over a cocommutative Hopf algebra. Let us recall the definitions of these categories.

\begin{definition}[{\cite[Definition 1.10]{AGV}}]
Let $H$ and $A$ be Hopf algebras and assume that $A$ is an object in $\mathsf{Mon}(_{H}\mathfrak{M})$. A \textit{bijective 1-cocycle} is a coalgebra isomorphism $\pi:H\to A$ such that
\begin{equation}\label{eq:1cocycle}
    \pi(hk)=\pi(h_{1})(h_{2}\rightharpoonup\pi(k)),\qquad \text{for all}\ h,k\in H,
\end{equation}
where $\rightharpoonup$ denotes the $H$-action on $A$. As it is said in \cite[Remark 1.11]{AGV}, any bijective 1-cocycle $\pi$ satisfies $\pi(1_{H})=1_{A}$. Let $\pi:H\to A$ and $\nu:K\to B$ be bijective 1-cocycles. A morphism between these bijective 1-cocycles is a pair $(f,g)$ of Hopf algebra maps $f:H\to K$, $g:A\to B$ such that
\[
\nu f=g\pi,\quad g(h\rightharpoonup a)=f(h)\rightharpoonup g(a),\ \text{for all}\ a\in A,\ h\in H.
\]
We denote the category of bijective 1-cocycles by $\mathsf{1C}$.
\end{definition}

In \cite[Theorem 1.12]{AGV} it is proved that $\HBR$ and $\mathsf{1C}$ are equivalent categories. More precisely, the functors 
$F:\HBR\to\mathsf{1C}$ and $G:\mathsf{1C}\to\HBR$
establishing the equivalence are defined on objects and morphisms, respectively, by
\[
F((H,\cdot,\bullet)):=(\mathrm{Id}_{H}:H^{\bullet}\to H^{\cdot}), \quad F(f):=(f,f)
\]
and
\[
G((\pi:H\to A^{\cdot})):=(A,\cdot,\bullet), \quad G((f,g)):=g,
\]
where $a\bullet b=\pi(\pi^{-1}(a)\pi^{-1}(b))$, for all $a,b\in A$. This equivalence restricts to $\CHBR$ and $\mathsf{1C}_{\mathrm{coc}}$, the category of bijective 1-cocycles between cocommutative Hopf algebras. Then, from Theorem \ref{thm:semi-ab}, we obtain:

\begin{corollary}
    The category $\mathsf{1C}_{\mathrm{coc}}$ of bijective 1-cocycles between cocommutative Hopf algebras is semi-abelian.
\end{corollary}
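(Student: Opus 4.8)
The plan is to transport the semi-abelian structure of $\CHBR$ across the categorical equivalence $\CHBR \simeq \mathsf{1C}_{\mathrm{coc}}$, using the fact that semi-abelianness is preserved under equivalence of categories. First I would recall the equivalence from \cite[Theorem 1.12]{AGV}, restricted to the cocommutative setting: the functors $F\colon \CHBR \to \mathsf{1C}_{\mathrm{coc}}$ and $G\colon \mathsf{1C}_{\mathrm{coc}} \to \CHBR$ described above are mutually quasi-inverse, so $\CHBR$ and $\mathsf{1C}_{\mathrm{coc}}$ are equivalent categories.

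The key observation is that each of the defining properties of a semi-abelian category — being pointed, exact (equivalently: regular plus every internal equivalence relation is a kernel pair), protomodular, and having binary coproducts — is invariant under equivalence of categories, since all these notions are expressed purely in terms of limits, colimits, and the existence/uniqueness of certain factorizations, which are preserved and reflected by any equivalence. Thus the single step is: by Theorem \ref{thm:semi-ab} the category $\CHBR$ is semi-abelian, and $\mathsf{1C}_{\mathrm{coc}} \simeq \CHBR$, hence $\mathsf{1C}_{\mathrm{coc}}$ is semi-abelian as well.

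There is essentially no obstacle here; the only thing worth spelling out is that the equivalence of \cite[Theorem 1.12]{AGV} genuinely restricts to the cocommutative subcategories. This is immediate because $F$ sends a cocommutative Hopf brace $(H,\cdot,\bullet)$ to the $1$-cocycle $\mathrm{Id}_H\colon H^{\bullet}\to H^{\cdot}$ between cocommutative Hopf algebras, while $G$ sends a $1$-cocycle $\pi\colon H\to A^{\cdot}$ with $H$ (hence $A$) cocommutative to the Hopf brace $(A,\cdot,\bullet)$ whose underlying coalgebra is that of $A$, which is cocommutative; and the natural isomorphisms witnessing $GF\cong \mathrm{Id}$ and $FG\cong \mathrm{Id}$ restrict accordingly.

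\begin{proof}
By \cite[Theorem 1.12]{AGV}, the categories $\HBR$ and $\mathsf{1C}$ are equivalent via the functors $F$ and $G$ described above. The functor $F$ sends a cocommutative Hopf brace $(H,\cdot,\bullet)$ to the bijective $1$-cocycle $\mathrm{Id}_H\colon H^{\bullet}\to H^{\cdot}$, which is a $1$-cocycle between cocommutative Hopf algebras, while $G$ sends a bijective $1$-cocycle $\pi\colon H\to A^{\cdot}$ between cocommutative Hopf algebras to the Hopf brace $(A,\cdot,\bullet)$, whose underlying coalgebra is the cocommutative coalgebra $A$. Since the natural isomorphisms $GF\cong \mathrm{Id}$ and $FG\cong \mathrm{Id}$ restrict to the corresponding full subcategories, the equivalence restricts to an equivalence $\CHBR \simeq \mathsf{1C}_{\mathrm{coc}}$. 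As the property of being a semi-abelian category (being pointed, exact, protomodular, and having binary coproducts) is invariant under equivalence of categories, and $\CHBR$ is semi-abelian by Theorem \ref{thm:semi-ab}, it follows that $\mathsf{1C}_{\mathrm{coc}}$ is semi-abelian.
\end{proof}
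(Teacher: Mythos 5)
Your proposal is correct and follows exactly the paper's route: the paper also deduces the corollary by observing that the equivalence of \cite[Theorem 1.12]{AGV} restricts to $\CHBR \simeq \mathsf{1C}_{\mathrm{coc}}$ and then transporting the semi-abelianness established in Theorem \ref{thm:semi-ab} across this equivalence. Your write-up is in fact slightly more detailed than the paper's, since you explicitly verify that the functors and the natural isomorphisms restrict to the cocommutative subcategories.
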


\begin{remark}
We observe that one can define the category $\HBR(\Cc)$ of Hopf braces in any braided monoidal category $\Cc$, as well as bijective 1-cocycles $\mathsf{1C}(\Cc)$ in $\Cc$ \cite{FGRR}. These two categories are still equivalent as it is proved in \cite[Theorem 2]{FGRR}. For an arbitrary braided monoidal category $\Cc$, it is not known if the category $\mathsf{Hopf}_{\mathrm{coc}}(\Cc)$ of cocommutative Hopf monoids in $\Cc$ is semi-abelian, hence 
the study of the semi-abelianness of $\CHBR(\Cc)$ seems to be out of reach, for the moment. Some other results in this direction have been obtained in \cite{Sciandra2}, where the semi-abelianness was established in case $\Cc=\mathsf{Vec}_{G}$, the category of $G$-graded vector spaces, if the abelian group $G$ is finitely generated and the characteristic of the base field $\Bbbk$ is different from 2 (not needed if $G$ is finite of odd cardinality), see \cite[Theorem 6.1]{Sciandra2}. This result extends \cite[Theorem 2.10]{GSV}, as the category of cocommutative Hopf algebras is recovered by choosing $G$ equal to the trivial group. Hence it would be reasonable to study, under the same assumptions on $G$ and $\Bbbk$, the exactness properties of the category $\mathsf{HBR}_{\mathrm{coc}}(\mathsf{Vec}_{G})$.
\end{remark}

Under the assumption of cocommutativity, Hopf braces are equivalent to \textit{matched pairs of actions} (in the sense of \cite[Definition 2.1]{FS}). These form a subcategory of the matched pairs of Hopf algebras in the sense of \cite[Definition 7.2.1]{Majid-book}. 

\begin{definition}
    Let $(H,\bullet,1,\Delta,\varepsilon,T)$ be a Hopf algebra. A \textit{matched pair of actions} $(H,\rightharpoonup,\leftharpoonup)$ on $H$ is the datum of a left action $\rightharpoonup$ and a right action $\leftharpoonup$ of $H$ on itself, such that $H$ is in $\mathsf{Comon}(_{H^{\bullet}}\mathfrak{M})$ and in $\mathsf{Comon}(\mathfrak{M}_{H^{\bullet}})$ with the respective actions, and the following conditions hold for all $a,b,c\in H$:
\begin{align}
        \label{matchedpair-3}& a\rightharpoonup(b\bullet c)=(a_{1}\rightharpoonup b_{1})\bullet ((a_{2}\leftharpoonup b_{2})\rightharpoonup c);\\
        \label{matchedpair-4}& (a\bullet b)\leftharpoonup c=(a\leftharpoonup(b_{1}\rightharpoonup c_{1}))\bullet (b_{2}\leftharpoonup c_{2});\\
        &\label{braided-commutativity}a\bullet b = (a_1\rightharpoonup b_1)\bullet (a_2\leftharpoonup b_2).
    \end{align}
A morphism of matched pair of actions $f:(H,\rightharpoonup,\leftharpoonup)\to(K,\rightharpoonup,\leftharpoonup)$ is a morphism of Hopf algebras such that
\[
f(a\rightharpoonup b)=f(a)\rightharpoonup f(b),\quad f(a\leftharpoonup b)=f(a)\leftharpoonup f(b).
\]
The category of matched pairs of actions will be denoted by $\mathsf{MP}$.
\end{definition}

\begin{remark}
We observe that, from \eqref{matchedpair-3} and \eqref{matchedpair-4}, the following conditions follow:
\[
a\rightharpoonup1=\varepsilon(a)1,\qquad 1\leftharpoonup a=\varepsilon(a)1,
\]
using that $\Delta$ and $\varepsilon$ are left linear with respect to the action $\rightharpoonup$ and right linear with respect to the action $\leftharpoonup$. 
\end{remark}
We recall the correspondence between cocommutative Hopf braces and matched pairs of actions over cocommutative Hopf algebras, which we denote by $\mathsf{MP}_{\mathrm{coc}}$.

\begin{theorem}[{\cite[Propositons 3.1, 3.2 and Theorem 3.3]{AGV}}]\label{Thm:Hopfbracecond.eq.} Let $(H,\cdot,\bullet)$ be a cocommutative Hopf brace. Then $(H^{\bullet},\rightharpoonup,\leftharpoonup)$ is a matched pair of actions on $H^{\bullet}$ where $\rightharpoonup$ and $\leftharpoonup$ are given by:
\[
a\rightharpoonup b:=S(a_{1})\cdot(a_{2}\bullet b),\qquad a\leftharpoonup b:=T(a_{1}\rightharpoonup b_{1})\bullet a_{2}\bullet b_{2},\quad \text{for all}\ a,b\in H.
\]
Let $(H,\bullet,1,\Delta,\varepsilon,T)$ be a cocommutative Hopf algebra and $(H,\rightharpoonup,\leftharpoonup)$ a matched pair of actions on $H$. Then, $(H,\cdot,\bullet,1,\Delta,\varepsilon,S,T)$ is a cocommutative Hopf brace defining
\[
a\cdot b=a_{1}\bullet(T(a_{2})\rightharpoonup b),\quad S(a)=a_{1}\rightharpoonup T(a_{2}), \quad \text{for all}\ a,b\in H.
\]
These constructions are inverse to each other and determine an isomorphism of categories: $$\CHBR \cong \mathsf{MP}_{\mathrm{coc}}.$$
\end{theorem}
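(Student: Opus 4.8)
The statement is Theorem~\ref{Thm:Hopfbracecond.eq.}, asserting that the two assignments between cocommutative Hopf braces and matched pairs of actions on cocommutative Hopf algebras are mutually inverse and functorial, hence give an isomorphism of categories $\CHBR \cong \mathsf{MP}_{\mathrm{coc}}$. The plan is to verify four things in sequence: (1) starting from a cocommutative Hopf brace $(H,\cdot,\bullet)$, the pair $(\rightharpoonup,\leftharpoonup)$ defined by the stated formulas really is a matched pair of actions; (2) starting from a matched pair $(H,\rightharpoonup,\leftharpoonup)$, the product $a\cdot b := a_1\bullet(T(a_2)\rightharpoonup b)$ together with the stated antipode really defines a cocommutative Hopf brace; (3) the two constructions are inverse to each other on objects; (4) a linear map $f$ is a morphism of Hopf braces if and only if it is a morphism of matched pairs, so the constructions extend to an isomorphism of categories.

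For step (1), most of the needed facts are already recorded in the Preliminaries: the left action $\rightharpoonup$ of \eqref{def:leftaction} makes $H^{\cdot}$ an object of $\mathsf{Comon}(_{H^{\bullet}}\mathfrak{M})$ by \cite[Lemma~2.2]{AGV}, and $\leftharpoonup$ makes $H^{\cdot}$ an object of $\mathsf{Comon}(\mathfrak{M}_{H^{\bullet}})$ by the same lemma; the three conditions \eqref{matchedpair-3}, \eqref{matchedpair-4}, \eqref{braided-commutativity} are exactly the content of \cite[Proposition~3.1]{AGV} (and correspond to the braid/QYBE statement \cite[Corollary~2.4]{AGV} already quoted in the excerpt). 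So step (1) is essentially a citation of \cite[Proposition~3.1]{AGV}. For step (2), one invokes \cite[Proposition~3.2]{AGV}: given a matched pair, one checks that $a\cdot b := a_1\bullet(T(a_2)\rightharpoonup b)$ is associative and unital (associativity uses \eqref{matchedpair-3}), that $\Delta$ and $\varepsilon$ are algebra maps for $\cdot$ (using that $\rightharpoonup$ is a coalgebra map and cocommutativity), that $S(a) := a_1\rightharpoonup T(a_2)$ is an antipode for $\cdot$ (using the module-coalgebra axioms and \eqref{braided-commutativity}), and finally that the Hopf brace compatibility \eqref{eq:HBRcomp.} holds — this last verification is the one genuinely using all of \eqref{matchedpair-3}--\eqref{braided-commutativity} together.

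Step (3), the bijectivity on objects, is where the two directions must be reconciled: starting from $(H,\cdot,\bullet)$, passing to $(\rightharpoonup,\leftharpoonup)$ and back, one must recover $\cdot$ from $a\cdot b = a_1\bullet(T(a_2)\rightharpoonup b)$; this is precisely one of the identities already displayed in the Preliminaries (``$a\cdot b=a_{1}\bullet(T(a_{2})\rightharpoonup b)$''), so it holds. Conversely, starting from a matched pair $(\rightharpoonup,\leftharpoonup)$, building $\cdot$, then forming the new left action $a\rightharpoonup' b := S(a_1)\cdot(a_2\bullet b)$, one must check $\rightharpoonup' = \rightharpoonup$, and similarly $\leftharpoonup' = \leftharpoonup$; expanding $S(a_1)\cdot(a_2\bullet b)$ using $S(a)=a_1\rightharpoonup T(a_2)$ and $a\cdot b = a_1\bullet(T(a_2)\rightharpoonup b)$ and then simplifying with the module and matched-pair axioms returns $a\rightharpoonup b$, and the analogous computation handles $\leftharpoonup$. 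For step (4), one observes that a Hopf-algebra map $f\colon H^{\bullet}\to K^{\bullet}$ automatically commutes with $T$ and $S$, and then that $f(a\cdot b)=f(a)\cdot f(b)$ is equivalent to $f(a\rightharpoonup b)=f(a)\rightharpoonup f(b)$ (via the mutually inverse formulas relating $\cdot$, $\rightharpoonup$ and $T$), and that compatibility with $\rightharpoonup$ together with the other structure forces compatibility with $\leftharpoonup$; since the constructions on objects are inverse bijections and functorial, one gets an isomorphism (not merely an equivalence) of categories. The main obstacle is bookkeeping: steps (2) and (3), especially checking \eqref{eq:HBRcomp.} from the matched-pair axioms and checking that $\rightharpoonup'=\rightharpoonup$, are multi-line Sweedler-notation computations where cocommutativity is used repeatedly; there is no conceptual difficulty, but the verification of the compatibility condition is the lengthiest piece. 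Since all of this is carried out in detail in \cite[Propositions~3.1,~3.2 and Theorem~3.3]{AGV}, it suffices to cite that reference, recalling here only the two pairs of inverse formulas that make the object-level bijection transparent.
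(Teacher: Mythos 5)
Your proposal is correct and matches the paper's treatment: the paper gives no proof of this theorem, stating it purely as a recalled result with the citation \cite[Propositions 3.1, 3.2 and Theorem 3.3]{AGV}, which is exactly where your outline ultimately defers. The auxiliary identities you invoke ($a\cdot b=a_{1}\bullet(T(a_{2})\rightharpoonup b)$ and $S(a)=a_{1}\rightharpoonup T(a_{2})$) are indeed the ones the paper records in its Preliminaries, so your sketch of the object-level bijection is consistent with the source.
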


Hence, from Theorem \ref{thm:semi-ab}, we get the following result:

\begin{corollary}\label{MP}
    The category $\mathsf{MP}_{\mathrm{coc}}$ of matched pairs of actions on cocommutative Hopf algebras is semi-abelian.
\end{corollary}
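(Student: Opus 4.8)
The plan is to transport the semi-abelian structure across the isomorphism of categories $\CHBR \cong \mathsf{MP}_{\mathrm{coc}}$ given in Theorem \ref{Thm:Hopfbracecond.eq.}. The key observation is that semi-abelianness, being a purely categorical notion (the conjunction of ``pointed'', ``exact'', ``protomodular'' and ``has binary coproducts'' in the sense of Definition \ref{semi-abelian}), is preserved and reflected by any equivalence of categories, and \emph{a fortiori} by an isomorphism of categories. So, once one knows that $\CHBR$ is semi-abelian (Theorem \ref{thm:semi-ab}) and that the functors of Theorem \ref{Thm:Hopfbracecond.eq.} constitute an isomorphism $\CHBR \cong \mathsf{MP}_{\mathrm{coc}}$, the conclusion is immediate.

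Concretely, I would proceed as follows. First, recall that Theorem \ref{thm:semi-ab} establishes that $\CHBR$ is semi-abelian. Second, invoke Theorem \ref{Thm:Hopfbracecond.eq.}, which provides an isomorphism of categories $\CHBR \cong \mathsf{MP}_{\mathrm{coc}}$: explicitly, to a cocommutative Hopf brace $(H,\cdot,\bullet)$ one associates the matched pair of actions $(H^{\bullet},\rightharpoonup,\leftharpoonup)$ with $a \rightharpoonup b = S(a_1)\cdot(a_2 \bullet b)$ and $a \leftharpoonup b = T(a_1 \rightharpoonup b_1)\bullet a_2 \bullet b_2$, and conversely a matched pair of actions on $(H,\bullet)$ yields the second operation $a \cdot b = a_1 \bullet (T(a_2)\rightharpoonup b)$; these assignments are mutually inverse and functorial. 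Third, note that an isomorphism of categories transports every property expressible in the language of categories with finite limits and colimits: in particular it preserves the zero object, kernels, cokernels, regular epimorphisms, internal equivalence relations, split epimorphisms, binary coproducts, and hence the validity of the Split Short Five Lemma (diagram \eqref{SplitShort}) and of Barr exactness. Therefore $\mathsf{MP}_{\mathrm{coc}}$ inherits from $\CHBR$ all four defining properties of a semi-abelian category.

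I do not expect any genuine obstacle here: the statement is a formal transport of structure along an already-established categorical isomorphism, so the only ``work'' is to record that the semi-abelian axioms are invariant under isomorphism (indeed equivalence) of categories, which is standard. If one wanted to be slightly more careful, the single point worth emphasising is that it is an \emph{isomorphism} (not merely an equivalence) of categories in Theorem \ref{Thm:Hopfbracecond.eq.}, which makes the transport entirely transparent, but even an equivalence would suffice since all the relevant notions are invariant under equivalence. Thus the proof reduces to a one-line citation of Theorem \ref{thm:semi-ab} together with Theorem \ref{Thm:Hopfbracecond.eq.}.

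\begin{proof}
By Theorem \ref{thm:semi-ab} the category $\CHBR$ is semi-abelian, and by Theorem \ref{Thm:Hopfbracecond.eq.} there is an isomorphism of categories $\CHBR \cong \mathsf{MP}_{\mathrm{coc}}$. Since the notion of semi-abelian category (Definition \ref{semi-abelian}) is expressed purely in categorical terms---existence of a zero object, Barr exactness, protomodularity, and existence of binary coproducts---it is preserved (and reflected) by any equivalence of categories, hence in particular by the above isomorphism. Therefore $\mathsf{MP}_{\mathrm{coc}}$ is semi-abelian.
\end{proof}
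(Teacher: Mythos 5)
Your proposal is correct and follows exactly the paper's (implicit) argument: the paper deduces the corollary directly from Theorem \ref{thm:semi-ab} via the isomorphism of categories $\CHBR \cong \mathsf{MP}_{\mathrm{coc}}$ of Theorem \ref{Thm:Hopfbracecond.eq.}, just as you do. Your additional remark that even a mere equivalence would suffice, since all the semi-abelian axioms are invariant under equivalence, is accurate and harmless.
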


Observe also that in the arXiv preprint \cite[Theorem 2.13]{LST} it is exhibited an equivalence between the category of cocommutative Hopf braces and the category of cocommutative post-Hopf algebras. Hence also the latter category is semi-abelian.

\begin{remark}
Removing the cocommutativity assumption, it has been proven in \cite[Theorems 3.11, 3.24, 3.25]{FS} that the category $\mathsf{MP}$ of matched pairs of actions on Hopf algebras is isomorphic to the category $\mathsf{YDBr}$ of \textit{Yetter--Drinfeld braces}, introduced in \cite[Definition 3.16]{FS}. Moreover, the latter category is also isomorphic to the category $\mathsf{YDPH}$ of \textit{Yetter--Drinfeld post-Hopf algebras}, introduced in \cite[Definition 3.1]{Sciandra1}, as it is proved in \cite[Proposition 3.21 and Corollaries 3.20, 3.23]{Sciandra1}. It would be interesting to investigate some exactness properties of the latter categories. This goes beyond the scope of this work and we leave it as a possible future project.
\end{remark}

\section{A torsion theory in $\CHBR$}
First recall that in any pointed category $\mathcal C$ one can define a \textit{torsion theory} $({\mathcal T}, {\mathcal F})$ as a pair of full replete subcategories of $\mathcal C$ satifying the following two properties:
\begin{itemize}
\item[1)] for any $X$ in $\mathcal C$ there is a short exact sequence 
$$\begin{tikzcd}
	0 & T & X & F & 0
	\arrow[from=1-1, to=1-2]
	\arrow[from=1-2, to=1-3]
	\arrow[from=1-3, to=1-4]
	\arrow[from=1-4, to=1-5]
\end{tikzcd}$$
where $T \in \mathcal T$ and $F \in \mathcal F$;
\item[2)]  the only morphism from a $T \in \mathcal T$ to an $F\in \mathcal F$ is the zero morphism.
\end{itemize}
When $\mathcal C$ is homological (hence, in particular, when it is semi-abelian) the \textit{torsion-free} subcategory $\mathcal F$ is then (normal epi)-reflective
$$
\begin{tikzcd}
	{\mathcal{F}} & \perp & {\mathcal{C},}
	\arrow["U"', shift right=2, from=1-1, to=1-3]
	\arrow["{\textsf{F}}"', shift right=3, from=1-3, to=1-1]
\end{tikzcd}
$$ in the sense that each component of the unit of the adjunction 
above
is a normal epimorphism. Dually, the \textit{torsion} subcategory $\mathcal T$ is (normal mono)-coreflective in $\mathcal C$,
see e.g. \cite{BG-TT}. A torsion theory is called \textit{hereditary} if $\mathcal{T}$ is closed under subobjects. \smallskip

In the category $\CHBR$ there are two full replete subcategories that will turn out to form a torsion theory.

\begin{itemize}
    \item The category $\mathsf{SKB}$ that is equivalent to the category of \emph{skew braces} \cite{GV}. This subcategory has objects given by the cocommutative Hopf braces whose ``underlying'' Hopf algebras are group Hopf algebras: these are generated as algebras by group-like elements, i.e. by the elements $x$ such that $\Delta(x) =x \otimes x$ and $\varepsilon (x) = 1$. The compatibility \eqref{eq:HBRcomp.} becomes
\[
g\bullet(h\cdot k)=(g\bullet h)\cdot g^{-\cdot}\cdot(g\bullet k),
\]
for all $(\Bbbk G,\cdot,\bullet)$ in $\mathsf{SKB}$ and $g,h,k\in G$.

\medskip
    
\item The category $\mathsf{PHBR}_{\mathrm{coc}}$ of \emph{primitive Hopf braces}. This subcategory has objects given by the cocommutative Hopf braces whose ``underlying'' Hopf algebras are universal enveloping algebras: these are generated as algebras by primitive elements, i.e. by the elements $x$ such that $\Delta(x)=1\otimes x+x\otimes 1$ (from which $\varepsilon(x)=0$ and $S(x)=-x$). The compatibility \eqref{eq:HBRcomp.} becomes 
\[
x\bullet(y\cdot z)=(x\bullet y)\cdot z-y\cdot x\cdot z+y\cdot(x\bullet z)
\]
for all $(H,\cdot,\bullet)$ in $\mathsf{PHBR}_{\mathrm{coc}}$ and $x,y,z\in H$ primitive elements.
\end{itemize}

\begin{remark}
    Since the category $\CHBR$ is pointed and protomodular (see Proposition \ref{prop:protomodularity}), a sequence
\[
\begin{tikzcd}
	0 & T & X & F & 0
	\arrow[from=1-1, to=1-2]
	\arrow[from=1-2, to=1-3, "k"]
	\arrow[from=1-3, to=1-4, "q"]
	\arrow[from=1-4, to=1-5]
\end{tikzcd}
\]
in $\CHBR$ is a short exact sequence precisely when $q$ is a regular epimorphism and $k=\mathsf{ker}(q)$ since, when this is the case, $q = \mathsf{coker}(k)$.   
Moreover, by Corollary \ref{lem:regepi}, we know that regular epimorphisms coincide with surjective morphisms in $\CHBR$. Thus, a short exact sequence in $\CHBR$ has the form
\[
\begin{tikzcd}
	(\mathrm{Hker}(f),\cdot,\bullet) & (A,\cdot,\bullet) & (B,\cdot,\bullet)
	\arrow[hook, from=1-1, to=1-2]
	\arrow[from=1-2, to=1-3, "f"]
\end{tikzcd}
\]
where $f:(A,\cdot,\bullet)\to(B,\cdot,\bullet)$ is a surjective morphism in $\CHBR$.
\end{remark}

\begin{proposition}\label{short-exact}
Let $\Bbbk$ be an algebraically closed field of characteristic 0. For any object $(H,\cdot,\bullet)$ in $\CHBR$ there is a short exact sequence  \begin{equation}\label{Short-HopfB} 
\begin{tikzcd}
	(\mathrm{Hker}(\pi),\cdot,\bullet) & (H,\cdot,\bullet) & (\Bbbk G,\cdot,\bullet)
	\arrow[hook, from=1-1, to=1-2]
	\arrow[from=1-2, to=1-3, "\pi"]
\end{tikzcd}
\end{equation}
such that $(\mathrm{Hker}(\pi),\cdot,\bullet) \in\mathsf{PHBR}_{\mathrm{coc}}$ and $(\Bbbk G,\cdot,\bullet) \in \mathsf{SKB}$, where $G:=G(H)$ is the set of group-like elements of $H$.
\end{proposition}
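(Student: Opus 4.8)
The plan is to build the short exact sequence by taking for $\pi$ the projection onto the "group-like part" of $H$ and identifying its kernel as a primitive Hopf brace. Concretely, since $\Bbbk$ is algebraically closed of characteristic $0$, the Cartier--Gabriel--Kostant structure theorem gives, for the cocommutative Hopf algebra $H^{\bullet}$ (equivalently $H^{\cdot}$, as they share the same coalgebra), a decomposition $H \cong U(P(H)) \# \Bbbk G(H)$ as Hopf algebras, where $G = G(H)$ is the group of group-likes and $P(H)$ is the Lie algebra of primitives. First I would define $\pi \colon H \to \Bbbk G$ on the coalgebra level as the canonical Hopf algebra retraction associated with this decomposition — it sends a group-like to itself and kills the augmentation ideal of the enveloping-algebra factor. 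The key point to check is that this $\pi$ is a morphism of Hopf \emph{braces}, i.e. compatible with both $\cdot$ and $\bullet$ simultaneously; since $G(H)$ is stable under both multiplications (a product of group-likes is group-like for any bialgebra structure on the coalgebra), and $\pi$ is the unique coalgebra map extending $G \hookrightarrow G$ appropriately, functoriality should force compatibility with both operations. Here I would invoke Theorem~\ref{Thm:Hopfbracecond.eq.} if it is convenient to argue via the matched pair $(\rightharpoonup,\leftharpoonup)$ instead, but the direct argument on $G(H)$ being a sub-skew-brace is cleaner.

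Next I would verify that $(\Bbbk G, \cdot, \bullet) \in \mathsf{SKB}$ and that it is indeed a quotient of $(H,\cdot,\bullet)$ in $\CHBR$: the group-like elements of $H$ form a set closed under $\cdot$, $\bullet$, and the two inverses (given by the antipodes $S$, $T$), hence $(G(H), \cdot, \bullet)$ is a skew brace, and $\Bbbk G(H)$ with the induced operations is the corresponding object of $\mathsf{SKB}$; the compatibility \eqref{eq:HBRcomp.} specializes to the displayed skew-brace identity on group-likes. That $\pi$ is surjective (a regular epimorphism) is immediate, so by the Remark preceding the statement, \eqref{Short-HopfB} is a short exact sequence as soon as we know $(\mathrm{Hker}(\pi),\cdot,\bullet)$ is the kernel — which it is by construction, with $\mathrm{Hker}(\pi)$ computed as in Section~2.

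The remaining and main point is that $(\mathrm{Hker}(\pi), \cdot, \bullet) \in \mathsf{PHBR}_{\mathrm{coc}}$, i.e. its underlying Hopf algebra is generated as an algebra by primitive elements. On the Hopf-algebra level this is the standard fact that $\mathrm{Hker}(\pi) = H^{\mathrm{co}\,\Bbbk G}$ is the "connected component", isomorphic to $U(P(H))$, under the Cartier--Gabriel--Kostant decomposition — a connected cocommutative Hopf algebra over an algebraically closed field of characteristic $0$ is an enveloping algebra. The subtlety is that $\mathrm{Hker}(\pi)$ must be a primitive Hopf brace, meaning \emph{both} $H^{\cdot}$- and $H^{\bullet}$-structures restrict to it and both are connected; since $H^{\cdot}$ and $H^{\bullet}$ have the same underlying coalgebra, connectedness (a purely coalgebraic condition: $\Bbbk 1$ is the unique simple subcoalgebra) holds for one iff it holds for the other, so this is automatic once we know $\mathrm{Hker}(\pi)$ is coalgebra-connected. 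I expect the main obstacle to be the bookkeeping needed to confirm that the \emph{same} subspace $\mathrm{Hker}(\pi)$ works as the Hopf-algebra kernel for both $\pi \colon H^{\cdot} \to (\Bbbk G)^{\cdot}$ and $\pi \colon H^{\bullet} \to (\Bbbk G)^{\bullet}$ — but this follows because, as noted in Proposition~\ref{prop:normalmono} and the discussion around Newman's correspondence, the kernel is the coalgebra-theoretic object $H^{\mathrm{co}\,\Bbbk G}$, which depends only on the coalgebra map $\pi$ and not on which multiplication we use. Once this is in place, identifying $\mathrm{Hker}(\pi)$ with $U(P(H))$ and hence as an object of $\mathsf{PHBR}_{\mathrm{coc}}$ completes the proof.
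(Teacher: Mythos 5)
Your proposal is correct and follows essentially the same route as the paper: both rest on the structure theory of pointed cocommutative Hopf algebras over an algebraically closed field of characteristic $0$, take $\pi$ to be the coalgebraically defined retraction onto the coradical $\Bbbk G$, and identify $\mathrm{Hker}(\pi)$ with the irreducible component $H^{1}$, whose two Hopf algebra structures are enveloping algebras of $P(H)$ with two different brackets. The paper handles the one point you flag as delicate (that the \emph{same} linear map $\pi$ is multiplicative for both $\cdot$ and $\bullet$) slightly more directly, by writing $\pi=\bigoplus_{g\in G}\pi_{g}$ on the irreducible components (a purely coalgebraic decomposition) and citing Sweedler's Proposition 8.1.2 for each Hopf algebra structure separately, rather than your uniqueness-of-the-coalgebra-retraction argument, but the two are interchangeable.
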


\begin{proof}
Given an object $(H,\cdot,\bullet)$ in $\CHBR$, we consider the sub-Hopf brace $(\Bbbk G,\cdot,\bullet)\subseteq(H,\cdot,\bullet)$, where $G:=G(H)$ is the set of group-like elements of $H$, which is an object in $\mathsf{SKB}$. In order to complete the proof, we are going to define a surjective morphism $\pi:(H,\cdot,\bullet)\to(\Bbbk G,\cdot,\bullet)$ in $\CHBR$ such that $(\mathrm{Hker}(\pi),\cdot,\bullet)$ is in $\mathsf{PHBR}_{\mathrm{coc}}$. 

Since $\Bbbk$ is algebraically closed, the cocommutative Hopf algebras $H^{\cdot}$ and $H^{\bullet}$ over $\Bbbk$ are automatically pointed (i.e. with coradical given by $\Bbbk G$), see \cite[Lemma 8.0.1 (c)]{Sw}. As coalgebras we have $H=\bigoplus_{g\in G}{H^{g}}$, where $H^{g}$ denotes the irreducible component containing $g\in G$. Defining $\pi_{g}:H^{g}\to\Bbbk G$ by the assignment $h\mapsto\varepsilon(h)g$ and $\pi:=\bigoplus_{g\in G}{\pi_{g}}:H\to\Bbbk G$ one sees that $\pi:H^{\cdot}\to\Bbbk G^{\cdot}$ and $\pi:H^{\bullet}\to\Bbbk G^{\bullet}$ are surjective Hopf algebra maps as proved in \cite[Proposition 8.1.2]{Sw}. Hence $\pi:(H,\cdot,\bullet)\to(\Bbbk G,\cdot,\bullet)$ is a morphism in $\CHBR$. Now we consider the kernel $(\mathrm{Hker}(\pi),\cdot,\bullet)\hookrightarrow(H,\cdot,\bullet)$ of $\pi$ in $\CHBR$. By \cite[Proposition 8.1.4]{Sw} we have that $\mathrm{Hker}(\pi)=H^{1}$, as vector spaces. Observe also that $P(H^{1})=P(H)$, where $P(H)$ denotes the vector space of primitive elements of $H$. 

Since $\Bbbk$ has characteristic 0 and $(H^{1})^{\cdot}$ and $(H^{1})^{\bullet}$ are irreducible cocommutative Hopf algebras, by \cite[Theorem 13.0.1]{Sw} we have that $(H^{1})^{\cdot}$ and $(H^{1})^{\bullet}$ are isomorphic as Hopf algebras to the universal enveloping algebras $U(\mathfrak{g}_{\cdot})$ and $U(\mathfrak{g}_{\bullet})$, respectively, where the vector space $\mathfrak{g}:=P(H)$ is considered as a Lie algebra with respect to two different brackets, namely $[x,y]_{\cdot}=x\cdot y-y\cdot x$ and $[x,y]_{\bullet}=x\bullet y-y\bullet x$, respectively. Hence $(H^{1})^{\cdot}$ and $(H^{1})^{\bullet}$ are generated as algebras by primitive elements and $(\mathrm{Hker}(\pi),\cdot,\bullet)=(H^{1},\cdot,\bullet)$ is an object in $\mathsf{PHBR}_{\mathrm{coc}}$. 
\end{proof}

Under the same assumptions on the base field $\Bbbk$, we can better describe the cocommutative Hopf brace $(\mathrm{Hker}(\pi),\cdot,\bullet)=(H^{1},\cdot,\bullet)$. In particular, we can relate the Hopf algebras $U(\mathfrak{g}_{\cdot})$ and $U(\mathfrak{g}_{\bullet})$, where $\mathfrak{g}:=P(H)=P(H^{1})$. In order to do this, we first recall the following definition:

\begin{definition}[\cite{Vallette}]
    A \textit{post-Lie algebra} $(\mathfrak{g},[\cdot,\cdot],\rightharpoonup)$ is the datum of a Lie algebra $(\mathfrak{g},[\cdot,\cdot])$ and a linear map $\rightharpoonup:\mathfrak{g}\ot\mathfrak{g}\to\mathfrak{g}$ such that, for all $x,y,z\in\mathfrak{g}$, the following equalities hold:
\begin{itemize}
    \item[1)] $x\rightharpoonup[y,z]=[x\rightharpoonup y,z]+[y,x\rightharpoonup z]$; \medskip
    \item[2)] $([x,y]+(x\rightharpoonup y)-(y\rightharpoonup x))\rightharpoonup z=(x\rightharpoonup(y\rightharpoonup z))-(y\rightharpoonup(x\rightharpoonup z))$.
\end{itemize}
\end{definition}

If $(\mathfrak{g},[\cdot,\cdot])$ is abelian, one recovers the famous notion of \textit{pre-Lie algebra} (or \textit{left symmetric algebra}). To any post-Lie algebra $(\mathfrak{g},[\cdot,\cdot],\rightharpoonup)$ one can associate a Lie algebra $\mathfrak{g}_{\rightharpoonup}:=(\mathfrak{g},[\cdot,\cdot]_\rightharpoonup)$, called the
\textit{subadjacent Lie algebra}, where the Lie bracket $[\cdot,\cdot]_{\rightharpoonup}$ is defined by $[x,y]_{\rightharpoonup}:=(x\rightharpoonup y)-(y\rightharpoonup x)+[x,y]$.
We obtain the following result:
\begin{lemma}
    Under the same assumptions as in Proposition \ref{short-exact}, we have that $(\mathfrak{g}:=P(H),[\cdot,\cdot]_{\cdot},\rightharpoonup)$ is a post-Lie algebra and $U(\mathfrak{g}_{\bullet})$ coincides with $U(\mathfrak{g}_{\cdot_{\rightharpoonup}})$.
\end{lemma}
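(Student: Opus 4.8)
The plan is to make the action $\rightharpoonup$ explicit on primitive elements and then to read off the post-Lie axioms directly from the module-algebra, module-coalgebra and module-action compatibilities of $H^{\cdot}$ over $H^{\bullet}$ recalled in the preliminaries. First I would compute, for $x,y\in\mathfrak{g}=P(H)$, that $x\rightharpoonup y=S(x_{1})\cdot(x_{2}\bullet y)=x\bullet y-x\cdot y$, using $\Delta(x)=x\otimes 1+1\otimes x$ and $S(x)=-x$; equivalently this follows from the identity $a\bullet b=a_{1}\cdot(a_{2}\rightharpoonup b)$ evaluated at $a=x$. Next, since $H$ is cocommutative, $H^{\cdot}$ is a left $H^{\bullet}$-module \emph{coalgebra}, so $\Delta(x\rightharpoonup y)=(x_{1}\rightharpoonup y_{1})\otimes(x_{2}\rightharpoonup y_{2})$; together with $x\rightharpoonup 1=\varepsilon(x)1=0$ and $1\rightharpoonup y=y$ this gives $\Delta(x\rightharpoonup y)=(x\rightharpoonup y)\otimes 1+1\otimes(x\rightharpoonup y)$ and $\varepsilon(x\rightharpoonup y)=0$, so $\rightharpoonup$ restricts to a linear map $\mathfrak{g}\otimes\mathfrak{g}\to\mathfrak{g}$. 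That $(\mathfrak{g},[\cdot,\cdot]_{\cdot})$ is a Lie algebra is clear, being the primitives of the Hopf algebra $H^{\cdot}$.

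For the first post-Lie axiom I would use that $H^{\cdot}$ is a left $H^{\bullet}$-module \emph{algebra}, i.e.\ $a\rightharpoonup(b\cdot c)=(a_{1}\rightharpoonup b)\cdot(a_{2}\rightharpoonup c)$: specialising to a primitive $x$ shows $x\rightharpoonup(-)$ is a derivation of $(\mathfrak{g},\cdot)$, and applying this to $[y,z]_{\cdot}=y\cdot z-z\cdot y$ yields $x\rightharpoonup[y,z]_{\cdot}=[x\rightharpoonup y,z]_{\cdot}+[y,x\rightharpoonup z]_{\cdot}$. For the second axiom, the key observation is that the explicit formula of the first step gives $[x,y]_{\cdot}+(x\rightharpoonup y)-(y\rightharpoonup x)=[x,y]_{\bullet}$ for all $x,y\in\mathfrak{g}$ (the $\cdot$-terms cancel). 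Hence the left-hand side of the second axiom equals $[x,y]_{\bullet}\rightharpoonup z=(x\bullet y)\rightharpoonup z-(y\bullet x)\rightharpoonup z$, which by the module-action identity $(a\bullet b)\rightharpoonup c=a\rightharpoonup(b\rightharpoonup c)$ equals $x\rightharpoonup(y\rightharpoonup z)-y\rightharpoonup(x\rightharpoonup z)$, i.e.\ the right-hand side.

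This establishes that $(\mathfrak{g},[\cdot,\cdot]_{\cdot},\rightharpoonup)$ is a post-Lie algebra; its subadjacent Lie bracket is then, by definition and the identity just used, $[x,y]_{\cdot_{\rightharpoonup}}=(x\rightharpoonup y)-(y\rightharpoonup x)+[x,y]_{\cdot}=[x,y]_{\bullet}$, so $\mathfrak{g}_{\cdot_{\rightharpoonup}}=\mathfrak{g}_{\bullet}$ as Lie algebras and therefore $U(\mathfrak{g}_{\cdot_{\rightharpoonup}})=U(\mathfrak{g}_{\bullet})$; combined with Proposition~\ref{short-exact} this also identifies $(\mathrm{Hker}(\pi))^{\bullet}=(H^{1})^{\bullet}$ with $U(\mathfrak{g}_{\cdot_{\rightharpoonup}})$. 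I do not expect a genuine obstacle: the whole argument rests on the single computation $[x,y]_{\cdot}+(x\rightharpoonup y)-(y\rightharpoonup x)=[x,y]_{\bullet}$, and the only point requiring care is bookkeeping — keeping straight that cocommutativity is what makes $\rightharpoonup$ land in $\mathfrak{g}$ (via the module-coalgebra structure), while the two post-Lie identities come respectively from the module-algebra axiom and from associativity of the module action. One could alternatively invoke the equivalence with cocommutative post-Hopf algebras from \cite{LST} together with the known fact that the primitives of such an object form a post-Lie algebra, but the direct argument above is shorter and self-contained.
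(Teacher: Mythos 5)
Your proposal is correct, and for the first half of the statement it takes a genuinely different (and more self-contained) route than the paper. The paper establishes that $(\mathfrak{g},[\cdot,\cdot]_{\cdot},\rightharpoonup)$ is a post-Lie algebra by citation: it invokes the equivalence between cocommutative Hopf braces and post-Hopf algebras from \cite{LST} and then the fact, from \cite[Theorem 2.7]{LST2}, that the primitives of a cocommutative post-Hopf algebra form a post-Lie algebra --- exactly the alternative you mention and set aside in your final sentence. You instead verify the two post-Lie axioms directly: the closure of $\rightharpoonup$ on $P(H)$ from the module-coalgebra property, the derivation identity from the module-algebra property, and the second axiom from $(a\bullet b)\rightharpoonup c=a\rightharpoonup(b\rightharpoonup c)$ together with the key computation $x\rightharpoonup y=x\bullet y-x\cdot y$. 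All of these ingredients are available in the paper's preliminaries (they come from \cite[Lemmas 1.8 and 2.2]{AGV}), and each step checks out. Your approach buys independence from the post-Hopf algebra machinery at the cost of a page of elementary verification; the paper's buys brevity at the cost of two external references. For the second half --- the identity $[x,y]_{\bullet}=[x,y]_{\cdot}+(x\rightharpoonup y)-(y\rightharpoonup x)$ on primitives, hence $\mathfrak{g}_{\bullet}=\mathfrak{g}_{\cdot_{\rightharpoonup}}$ and $U(\mathfrak{g}_{\bullet})=U(\mathfrak{g}_{\cdot_{\rightharpoonup}})$ --- your computation is essentially identical to the paper's, which expands $x\bullet y=x_{1}\cdot(x_{2}\rightharpoonup y)$ rather than substituting $x\rightharpoonup y=x\bullet y-x\cdot y$, an immaterial difference.
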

\begin{proof}
Given the cocommutative Hopf brace $(H,\cdot,\bullet)$, by \cite[Theorem 2.13]{LST} we know that $(H,\cdot,\rightharpoonup)$ is a post-Hopf algebra (see \cite[Definition 2.1]{LST2}), where $\rightharpoonup$ is defined as in \eqref{def:leftaction}. Therefore, by \cite[Theorem 2.7]{LST2} we have that $(\mathfrak{g}:=P(H),[\cdot,\cdot]_{\cdot},\rightharpoonup)$ is a post-Lie algebra. Given $x,y\in P(H)$, we compute 
\[
\begin{split}
    [x,y]_{\bullet}&=x\bullet y-y\bullet x=x_{1}\cdot(x_{2}\rightharpoonup y)-y_{1}\cdot(y_{2}\rightharpoonup x)\\&=x\cdot(1\rightharpoonup y)+(x\rightharpoonup y)-y\cdot(1\rightharpoonup x)-(y\rightharpoonup x)\\&=x\cdot y-y\cdot x+(x\rightharpoonup y)-(y\rightharpoonup x)\\&=[x,y]_{\cdot}+(x\rightharpoonup y)-(y\rightharpoonup x),
\end{split}
\]
so $[\cdot,\cdot]_{\bullet}=[\cdot,\cdot]_{\cdot_{\rightharpoonup}}$, i.e. $\mathfrak{g}_{\bullet}$ coincides with the subadjacent Lie algebra $\mathfrak{g}_{\cdot_{\rightharpoonup}}$ of $\mathfrak{g}_{\cdot}$.
\end{proof}

\begin{remark}
    We recall that, given a post-Lie algebra $(\mathfrak{g},[\cdot,\cdot],\rightharpoonup)$, then $(U(\mathfrak{g}),\widetilde{\rightharpoonup})$ is a cocommutative post-Hopf algebra, where the morphism $\widetilde{\rightharpoonup}$ which extends $\rightharpoonup$ is defined as in \cite[Theorem 2.8]{LST2}. Moreover, $U(\mathfrak{g}_{\rightharpoonup})$ is isomorphic to the subadjacent Hopf algebra $U(\mathfrak{g})_{\widetilde{\rightharpoonup}}$ of $U(\mathfrak{g})$. Let us also observe that universal enveloping algebras over pre-Lie algebras $(\mathfrak{g},\rightharpoonup)$ are shown to provide cocommutative Hopf braces in \cite[Proposition 4.6]{AGV}.
\end{remark}

Hence, under the same assumptions on the base field $\Bbbk$, the Hopf algebras $(H^{1})^{\cdot}$ and $(H^{1})^{\bullet}$ are isomorphic to $U(\mathfrak{g}_{\cdot})$ and its subadjacent Hopf algebra, respectively. We have then shown that the primitive Hopf braces in the short exact sequences \eqref{Short-HopfB} are given by universal enveloping algebras of post-Lie algebras. \medskip

As shown in the proof of Proposition \ref{short-exact}, if $\Bbbk$ is algebraically closed, we have a well-defined morphism $\pi:(H,\cdot,\bullet)\to(\Bbbk G,\cdot,\bullet)$ in $\CHBR$, where $G:=G(H)$. Hence we obtain a split epimorphism 
\[
\begin{tikzcd}
	(H,\cdot,\bullet) & (\Bbbk G,\cdot,\bullet)
	\arrow[shift left, from=1-1, to=1-2,"\pi"]
	\arrow[hook',shift left, from=1-2, to=1-1,"\gamma"]
\end{tikzcd},
\]
in $\CHBR$ where $\gamma$ is the inclusion ($\pi \gamma = \mathrm{Id}_{\Bbbk G}$). Hence, by applying Proposition \ref{thm:points}, we get the following:

\begin{corollary}
    Let $\Bbbk$ be an algebraically closed field. Then, any $(H,\cdot,\bullet)$ in $\CHBR$ is isomorphic to $(H^{1}\otimes\Bbbk G,\cdot_{\#},\overline{\bullet})$ and $(H^{1}\otimes\Bbbk G,\overline{\cdot},\bullet_{\#})$ in $\CHBR$, where $G:=G(H)$. 
\end{corollary}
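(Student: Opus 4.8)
The plan is to reduce the statement directly to the semi-direct product decomposition of Proposition \ref{thm:points}. Fix an algebraically closed field $\Bbbk$ and an object $(H,\cdot,\bullet)$ in $\CHBR$, and set $G := G(H)$. First I would recall that the morphism $\pi \colon (H,\cdot,\bullet) \to (\Bbbk G,\cdot,\bullet)$ constructed in the proof of Proposition \ref{short-exact} is already available as soon as $\Bbbk$ is algebraically closed: algebraic closure is what forces $H^{\cdot}$ and $H^{\bullet}$ to be pointed, so that $H = \bigoplus_{g \in G} H^{g}$ as a coalgebra and $\pi = \bigoplus_{g\in G}\pi_{g}$ is a Hopf algebra map with respect to both products, hence a morphism in $\CHBR$; and the identification $\mathrm{Hker}(\pi) = H^{1}$ as vector spaces, via \cite[Proposition 8.1.4]{Sw}, likewise uses only that $H$ is pointed. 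Together with the inclusion $\gamma \colon (\Bbbk G,\cdot,\bullet) \hookrightarrow (H,\cdot,\bullet)$, which satisfies $\pi\gamma = \mathrm{Id}_{\Bbbk G}$, this data constitutes an object
\[
\begin{tikzcd}
	(H,\cdot,\bullet) & (\Bbbk G,\cdot,\bullet)
	\arrow[shift left, from=1-1, to=1-2,"\pi"]
	\arrow[shift left, from=1-2, to=1-1,"\gamma"]
\end{tikzcd}
\]
of the category $\mathsf{Pt}(\CHBR)$ of points in $\CHBR$.

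Next I would simply apply Proposition \ref{thm:points} to this point, taking $(A,\cdot,\bullet) = (H,\cdot,\bullet)$ and letting $(\Bbbk G,\cdot,\bullet)$ play the role of the object $(H,\cdot,\bullet)$ in the statement of that proposition. Since $\mathrm{Hker}(\pi) = H^{1}$, the proposition yields an isomorphism in $\CHBR$ between $(H,\cdot,\bullet)$ and $(H^{1}\otimes\Bbbk G,\cdot_{\#},\overline{\bullet})$, where $\cdot_{\#}$ and $\overline{\bullet}$ are the operations displayed in Proposition \ref{thm:points} with $h,h' \in \Bbbk G$ and $k,k' \in H^{1}$, and likewise an isomorphism between $(H,\cdot,\bullet)$ and $(H^{1}\otimes\Bbbk G,\overline{\cdot},\bullet_{\#})$. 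This is precisely the assertion of the corollary.

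The only point that genuinely requires care — and the reason the hypothesis here is weaker than in Proposition \ref{short-exact} — is verifying that neither the construction of the point $(\pi,\gamma)$ nor the equality $\mathrm{Hker}(\pi) = H^{1}$ relies on $\mathrm{char}(\Bbbk) = 0$; the characteristic-zero assumption in Proposition \ref{short-exact} is used only afterwards, to recognise $(H^{1})^{\cdot}$ and $(H^{1})^{\bullet}$ as universal enveloping algebras and thereby place $(\mathrm{Hker}(\pi),\cdot,\bullet)$ in $\mathsf{PHBR}_{\mathrm{coc}}$, which is irrelevant for the present statement. Beyond this observation the corollary is a purely formal consequence of Proposition \ref{thm:points}, so I expect no real obstacle.
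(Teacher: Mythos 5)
Your proposal is correct and follows essentially the same route as the paper: the paper likewise observes that the splitting $\pi\gamma=\mathrm{Id}_{\Bbbk G}$ constructed in the proof of Proposition \ref{short-exact} only needs $\Bbbk$ algebraically closed (for pointedness and the identification $\mathrm{Hker}(\pi)=H^{1}$), and then applies Proposition \ref{thm:points} to this point in $\mathsf{Pt}(\CHBR)$. Your explicit remark that the characteristic-zero hypothesis is only used later, to recognise $(H^{1})^{\cdot}$ and $(H^{1})^{\bullet}$ as universal enveloping algebras, matches the paper's implicit reasoning exactly.
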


\begin{remark}\label{isophi}
The isomorphisms of Hopf algebras that are used to transport the Hopf brace structure of $(H,\cdot,\bullet)$ are the ones recalled at the beginning of Section \ref{Section-protomodular} 
\[
\phi^{\cdot}:(H^{1}\otimes\Bbbk G)^{\cdot_{\#}}\to H^{\cdot},\ x\otimes g\mapsto x\cdot g,\quad \phi^{\bullet}:(H^{1}\otimes\Bbbk G)^{\bullet_{\#}}\to H^{\bullet},\ x\otimes g\mapsto x\bullet g,
\]
(see also \cite[Theorem 8.1.5]{Sw}), where $(H^{1}\otimes\Bbbk G)^{\cdot_{\#}}$ and $(H^{1}\otimes\Bbbk G)^{\bullet_{\#}}$ have, respectively, products $\cdot_{\#}$ and $\bullet_{\#}$ defined by
\[
(x\otimes g)\cdot_{\#}(x'\otimes g'):=(x\cdot g\cdot x'\cdot g^{-1})\otimes(g\cdot g'), \quad (x\otimes g)\bullet_{\#}(x'\otimes g'):=(x\bullet g\bullet x'\bullet g^{-1})\otimes(g\bullet g'),
\]
for all $x,x'\in H^{1}$ and $g,g'\in G$.
\end{remark}

As we observed in the proof of Proposition \ref{short-exact}, if $\Bbbk$ has characteristic 0, then $(H^{1})^{\cdot}\cong U(\mathfrak{g}_{\cdot})$ and $(H^{1})^{\bullet}\cong U(\mathfrak{g}_{\bullet})$ as Hopf algebras, where the vector space $\mathfrak{g}:=P(H)$ is considered as a Lie algebra with respect to two different brackets. Observe that $U(\mathfrak{g}_{\cdot})$ and $U(\mathfrak{g}_{\bullet})$ are isomorphic as vector spaces (and as coalgebras); we denote them just by $U(\mathfrak{g})$. Using Lemma \ref{lem:transportingHopfbraces}, we know that $(H^{1},\cdot,\bullet)$ is isomorphic to $(U(\mathfrak{g}),-,\overline{\bullet})$ and $(U(\mathfrak{g}),\overline{\cdot},-)$ in $\CHBR$, where $-$ denotes juxtaposition. By applying Proposition \ref{thm:points}, we get the following:

\begin{corollary}
    Let $\Bbbk$ be an algebraically closed field of characteristic 0. Then, any $(H,\cdot,\bullet)$ in $\CHBR$ is isomorphic to $(U(\mathfrak{g})\otimes\Bbbk G,\cdot_{\#},\overline{\bullet})$ and $(U(\mathfrak{g})\otimes\Bbbk G,\overline{\cdot},\bullet_{\#})$ in $\CHBR$, where $\mathfrak{g}:=P(H)$.
\end{corollary}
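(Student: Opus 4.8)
The plan is to obtain both isomorphisms by running the argument behind Proposition \ref{thm:points} through the characteristic-$0$ identification of $H^{1}$. Since $\Bbbk$ is algebraically closed we have, as in the proof of Proposition \ref{short-exact}, the split epimorphism $\pi:(H,\cdot,\bullet)\to(\Bbbk G,\cdot,\bullet)$ with section the inclusion $\gamma$, and $\mathrm{Hker}(\pi)=H^{1}$ as a sub-Hopf brace. Applying Proposition \ref{thm:points} to this point gives the $\CHBR$-isomorphisms $\phi^{\cdot}:(H^{1}\otimes\Bbbk G,\cdot_{\#},\overline{\bullet})\to(H,\cdot,\bullet)$ and $\phi^{\bullet}:(H^{1}\otimes\Bbbk G,\overline{\cdot},\bullet_{\#})\to(H,\cdot,\bullet)$, obtained by transporting the Hopf brace structure through the Hopf algebra isomorphisms $\phi^{\cdot}:(H^{1}\otimes\Bbbk G)^{\cdot_{\#}}\to H^{\cdot}$ and $\phi^{\bullet}:(H^{1}\otimes\Bbbk G)^{\bullet_{\#}}\to H^{\bullet}$ recalled in Remark \ref{isophi}. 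On the other hand, as $\Bbbk$ has characteristic $0$ and $(H^{1})^{\cdot}$, $(H^{1})^{\bullet}$ are irreducible cocommutative Hopf algebras, \cite[Theorem 13.0.1]{Sw} provides Hopf algebra isomorphisms $\psi^{\cdot}:U(\mathfrak{g}_{\cdot})\to(H^{1})^{\cdot}$ and $\psi^{\bullet}:U(\mathfrak{g}_{\bullet})\to(H^{1})^{\bullet}$, where $\mathfrak{g}:=P(H)$ and $U(\mathfrak{g}_{\cdot})$, $U(\mathfrak{g}_{\bullet})$ share the underlying coalgebra $U(\mathfrak{g})$.

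For the first isomorphism I would transport along $\psi^{\cdot}$ the left $\Bbbk G$-module-algebra structure on $(H^{1})^{\cdot}$ that is used to form the smash product $(H^{1})^{\cdot}\#\Bbbk G^{\cdot}$, namely the adjoint action $g\otimes k\mapsto\gamma(g)\cdot k\cdot\gamma(S(g))=g\cdot k\cdot g^{-1}$, which is well defined on $H^{1}=\mathrm{Hker}(\pi)$ because this is a normal Hopf subalgebra of $H^{\cdot}$. The resulting action on $U(\mathfrak{g}_{\cdot})$ is exactly the one implicit in the notation $(U(\mathfrak{g})\otimes\Bbbk G)^{\cdot_{\#}}$, and since the smash product is functorial in the module algebra, $\psi^{\cdot}\otimes\mathrm{Id}_{\Bbbk G}:(U(\mathfrak{g})\otimes\Bbbk G)^{\cdot_{\#}}\to(H^{1}\otimes\Bbbk G)^{\cdot_{\#}}$ is an isomorphism of Hopf algebras; hence so is the composite $\phi^{\cdot}\circ(\psi^{\cdot}\otimes\mathrm{Id}_{\Bbbk G}):(U(\mathfrak{g})\otimes\Bbbk G)^{\cdot_{\#}}\to H^{\cdot}$. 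By part 1) of Lemma \ref{lem:transportingHopfbraces} there is then a unique Hopf brace structure $(U(\mathfrak{g})\otimes\Bbbk G,\cdot_{\#},\overline{\bullet})$ making this composite an isomorphism in $\CHBR$, which is the first assertion. The second one follows symmetrically, exchanging $\cdot$ and $\bullet$: one uses $\psi^{\bullet}$, the adjoint action $g\otimes k\mapsto\gamma(g)\bullet k\bullet\gamma(T(g))$ on $\mathrm{Hker}(\pi)$, the composite $\phi^{\bullet}\circ(\psi^{\bullet}\otimes\mathrm{Id}_{\Bbbk G}):(U(\mathfrak{g})\otimes\Bbbk G)^{\bullet_{\#}}\to H^{\bullet}$, and part 2) of Lemma \ref{lem:transportingHopfbraces}.

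I do not expect any genuine obstacle here; the only point requiring care is the transport bookkeeping of the second paragraph, i.e.\ confirming that pushing the $\Bbbk G$-module-algebra structure on $(H^{1})^{\cdot}$ (resp.\ $(H^{1})^{\bullet}$) through $\psi^{\cdot}$ (resp.\ $\psi^{\bullet}$) yields precisely the smash product $\cdot_{\#}$ (resp.\ $\bullet_{\#}$) appearing in the statement, so that $\psi^{\cdot}\otimes\mathrm{Id}_{\Bbbk G}$ (resp.\ $\psi^{\bullet}\otimes\mathrm{Id}_{\Bbbk G}$) is an algebra homomorphism. Once this is checked, Lemma \ref{lem:transportingHopfbraces} supplies the two Hopf brace structures and the corresponding $\CHBR$-isomorphisms automatically, exactly as in the proof of Proposition \ref{thm:points}; in essence the argument just records that the chain of isomorphisms used there factors through the characteristic-$0$ identification $(H^{1})^{\cdot}\cong U(\mathfrak{g})\cong(H^{1})^{\bullet}$.
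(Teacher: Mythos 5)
Your proposal is correct and takes essentially the same route as the paper: both combine the semi-direct product decomposition of Proposition \ref{thm:points} for the split epimorphism $\pi:(H,\cdot,\bullet)\to(\Bbbk G,\cdot,\bullet)$ with the characteristic-$0$ identification $(H^{1})^{\cdot}\cong U(\mathfrak{g}_{\cdot})$, $(H^{1})^{\bullet}\cong U(\mathfrak{g}_{\bullet})$ from \cite[Theorem 13.0.1]{Sw} and the transport Lemma \ref{lem:transportingHopfbraces}. The only (immaterial) difference is the order of operations --- the paper first replaces $(H^{1},\cdot,\bullet)$ by $(U(\mathfrak{g}),-,\overline{\bullet})$ and then applies Proposition \ref{thm:points}, whereas you decompose first and then transport along $\psi^{\cdot}\otimes\mathrm{Id}_{\Bbbk G}$ --- and your explicit check that the smash product structure is carried over is exactly the bookkeeping the paper leaves implicit.
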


\begin{remark}
In case $\Bbbk$ is algebraically closed of characteristic 0, $\phi^{\cdot}$ and $\phi^{\bullet}$ as in Remark \ref{isophi} become the well-known Hopf algebras isomorphisms $H^{\cdot}\cong(U(\mathfrak{g})\otimes\Bbbk G)^{\cdot_{\#}}$ and $H^{\bullet}\cong(U(\mathfrak{g})\otimes\Bbbk G)^{\bullet_{\#}}$. In fact, it is known that any cocommutative Hopf algebra $H$ over an algebraically closed field $\Bbbk$ of characteristic 0, is isomorphic to $U(P(H))\#\Bbbk G(H)$; this result is usually associated with the names Cartier--Gabriel--Kostant--Milnor--Moore \cite{MM}.
\end{remark}

From Proposition \ref{short-exact}, we then obtain the following:

\begin{theorem}\label{hereditary}
Let $\Bbbk$ be an algebraically closed field of characteristic 0. The pair $(\mathsf{PHBR}_{\mathrm{coc}}, \mathsf{SKB})$ is a hereditary torsion theory in $\CHBR$.
\end{theorem}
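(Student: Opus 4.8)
The plan is to verify the two axioms of a torsion theory for the pair $(\mathsf{PHBR}_{\mathrm{coc}}, \mathsf{SKB})$ in $\CHBR$, and then to check heredity of the torsion class. Axiom 1) --- the existence, for every $(H,\cdot,\bullet)$ in $\CHBR$, of a short exact sequence with torsion kernel and torsion-free cokernel --- is already supplied by Proposition \ref{short-exact}: the sequence \eqref{Short-HopfB} has $(\mathrm{Hker}(\pi),\cdot,\bullet) = (H^1,\cdot,\bullet)\in\mathsf{PHBR}_{\mathrm{coc}}$ and $(\Bbbk G,\cdot,\bullet)\in\mathsf{SKB}$. So the first step is essentially a citation of that Proposition.

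For Axiom 2), I would show that the only morphism $f\colon (P,\cdot,\bullet)\to(\Bbbk G,\cdot,\bullet)$ with $(P,\cdot,\bullet)\in\mathsf{PHBR}_{\mathrm{coc}}$ and $(\Bbbk G,\cdot,\bullet)\in\mathsf{SKB}$ is the zero morphism. Since $f$ is in particular a morphism of cocommutative Hopf algebras $P^\cdot \to (\Bbbk G)^\cdot$, and $P^\cdot$ is a universal enveloping algebra $U(\mathfrak{g})$ which is generated as an algebra by its primitive elements, it suffices to see that $f$ kills all primitives. But a Hopf algebra morphism sends primitive elements to primitive elements, and the group Hopf algebra $\Bbbk G$ has no nonzero primitive elements (in characteristic $0$; this is where the hypothesis on $\Bbbk$ enters, via $P(\Bbbk G)=0$). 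Hence $f$ vanishes on $P(P) = \mathfrak{g}$, therefore on the subalgebra it generates, which is all of $P$; thus $f = u_{\Bbbk G}\varepsilon_P$ is the zero morphism. This establishes that $(\mathsf{PHBR}_{\mathrm{coc}},\mathsf{SKB})$ is a torsion theory.

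It remains to check that the torsion theory is hereditary, i.e. that $\mathsf{PHBR}_{\mathrm{coc}}$ is closed under subobjects in $\CHBR$. Let $(B,\cdot,\bullet)\hookrightarrow(P,\cdot,\bullet)$ be a sub-Hopf brace with $(P,\cdot,\bullet)\in\mathsf{PHBR}_{\mathrm{coc}}$. Then $B^\cdot$ is a Hopf subalgebra of the irreducible cocommutative Hopf algebra $P^\cdot = U(\mathfrak{g}_\cdot)$; by the structure theory of cocommutative Hopf algebras over an algebraically closed field (every Hopf subalgebra of a pointed irreducible Hopf algebra is again pointed irreducible, cf. \cite[Chapter 8]{Sw}), $B^\cdot$ is itself irreducible, hence in characteristic $0$ of the form $U(\mathfrak{h}_\cdot)$ for the Lie subalgebra $\mathfrak{h} = P(B)\subseteq\mathfrak{g}$; likewise $B^\bullet = U(\mathfrak{h}_\bullet)$. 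Thus $B^\cdot$ and $B^\bullet$ are generated as algebras by primitive elements, so $(B,\cdot,\bullet)\in\mathsf{PHBR}_{\mathrm{coc}}$, proving heredity.

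The main obstacle is really the bookkeeping in the heredity step: one must be careful that ``sub-Hopf brace'' means a subspace closed under \emph{both} multiplications, both antipodes and the comultiplication, so that $B^\cdot$ and $B^\bullet$ are genuinely Hopf subalgebras of $P^\cdot$ and $P^\bullet$ on the same underlying coalgebra $B$; the irreducibility of each then transfers from $P$, and since $P(B)$ is the same subspace whether one computes it inside $B^\cdot$ or $B^\bullet$, the two reconstructions $U(\mathfrak{h}_\cdot)$ and $U(\mathfrak{h}_\bullet)$ do sit on the one coalgebra $B$. All the necessary Hopf-algebraic inputs --- pointedness over an algebraically closed field, $P(\Bbbk G)=0$, and the characteristic-$0$ equivalence between irreducible cocommutative Hopf algebras and universal enveloping algebras --- are exactly the facts already invoked in the proof of Proposition \ref{short-exact}.
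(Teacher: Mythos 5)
Your proposal is correct and follows essentially the same route as the paper: axiom 1) is cited from Proposition \ref{short-exact}, axiom 2) is proved by noting that morphisms preserve primitives while $\Bbbk G$ has none, and heredity is reduced to the fact that a sub-Hopf brace of a primitive Hopf brace is connected and hence, by the characteristic-$0$ structure theory, a universal enveloping algebra. The only (harmless) variation is in how connectedness of the subobject is obtained: you use that a Hopf subalgebra of an irreducible cocommutative Hopf algebra is irreducible, whereas the paper argues that the injective morphism reflects the absence of non-trivial group-like elements and then invokes the Cartier--Gabriel--Kostant--Milnor--Moore decomposition.
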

\begin{proof}
The pair $(\mathsf{PHBR}_{\mathrm{coc}},\mathsf{SKB})  $ respects the first condition in the definition of torsion theory by Proposition \ref{short-exact}.
For the second condition, consider a morphism $f \colon (P,\cdot,\bullet) \rightarrow (\Bbbk G,\cdot,\bullet)$ in $\CHBR$ from a primitive Hopf brace $(P,\cdot,\bullet)$ in $\mathsf{PHBR}_{\mathrm{coc}}$ to a Hopf brace $(\Bbbk G,\cdot,\bullet)$ in $\mathsf{SKB}$. The Hopf algebras $P^{\cdot}$ and $P^{\bullet}$ are generated as algebras by primitive elements, which are preserved by any morphism in $\CHBR$. Since there is no non-zero primitive element in the group Hopf algebras $\Bbbk G^{\cdot}$ and $\Bbbk G^{\bullet}$, it follows that $f$ must be zero, so it factors through the zero object $\Bbbk$ in $\CHBR$. This shows that $(\mathsf{PHBR}_{\mathrm{coc}}, \mathsf{SKB})$ is a torsion theory. To see that $\mathsf{PHBR}_{\mathrm{coc}}$ is closed under subobjects in $\CHBR$ consider any monomorphism (equivalently, by Corollary \ref{lem:regepi}, an injective morphism) $m \colon (H,\cdot,\bullet) \rightarrow (P,\cdot,\bullet)$ in $\CHBR$, with $(P,\cdot,\bullet)$ in $\mathsf{PHBR}_{\mathrm{coc}}$. Since the morphism $m:H^{\cdot}\to P^{\cdot}$ 
in $\mathsf{Hopf}_{\Bbbk,\mathrm{coc}}$ preserves group-like elements and there is no non-trivial such element in $P^{\cdot}$, the injectivity of $m$ implies that there is no non-trivial group-like element in $H^{\cdot}$. Since $H^{\cdot}$ and $H^{\bullet}$ are cocommutative Hopf algebras over an algebraically closed field of characteristic 0, through the isomorphisms $H^{\cdot}\cong(U(\mathfrak{g})\otimes\Bbbk G)^{\cdot_{\#}}$ and $H^{\bullet}\cong(U(\mathfrak{g})\otimes\Bbbk G)^{\bullet_{\#}}$, where $\mathfrak{g}:=P(H)$, we deduce that $H^{\cdot}$ and $H^{\bullet}$ are generated as algebras by primitive elements. Hence $(H,\cdot,\bullet)$ belongs to $\mathsf{PHBR}_{\mathrm{coc}}$.
\end{proof}
We observe that the forgetful functors $F^{\cdot}, F^{\bullet}:\CHBR\to\mathsf{Hopf}_{\Bbbk,\mathrm{coc}}$ send the short exact sequence \eqref{Short-HopfB}
to the canonical short exact sequence in the hereditary torsion theory $(\mathsf{PrimHopf}_{\Bbbk}, \mathsf{GrpHopf}_{\Bbbk})$ in $\mathsf{Hopf}_{\Bbbk, \mathrm{coc}}$ \cite[Theorem 4.3]{GKV}. These are so \emph{torsion theory functors} in the sense of \cite{BCG}.

\begin{corollary}\label{Birkhoff}
The category $\mathsf{SKB}$ is a Birkhoff subcategory of $\CHBR$, that is, a full replete reflective subcategory that is closed in $\CHBR$ under subobjects and regular quotients. 
\end{corollary}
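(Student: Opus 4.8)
The plan is to deduce almost everything from the hereditary torsion theory $(\mathsf{PHBR}_{\mathrm{coc}}, \mathsf{SKB})$ of Theorem \ref{hereditary}, and then to verify by hand the one extra closure property that upgrades a torsion-free subcategory to a Birkhoff subcategory, namely closure under regular quotients. First, since $\CHBR$ is semi-abelian (Theorem \ref{thm:semi-ab}), hence homological (Theorem \ref{Homological}), the torsion-free subcategory $\mathsf{SKB}$ of the torsion theory is (normal epi)-reflective in $\CHBR$: the reflection of $(H,\cdot,\bullet)$ is $(\Bbbk G,\cdot,\bullet)$ with $G = G(H)$, and the unit at $(H,\cdot,\bullet)$ is the regular epimorphism $\pi$ from the short exact sequence \eqref{Short-HopfB} of Proposition \ref{short-exact}. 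Thus $\mathsf{SKB}$ is already a full replete reflective subcategory of $\CHBR$.

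Next, closure under subobjects comes for free, since the torsion-free class of any torsion theory is closed under subobjects (a monomorphism from a torsion-free object has torsion part factoring through it as a mono that is also the zero morphism, hence trivial). If one prefers a direct argument: a monomorphism $m \colon (H,\cdot,\bullet) \hookrightarrow (\Bbbk G,\cdot,\bullet)$ in $\CHBR$ is injective by Corollary \ref{lem:regepi}, so it exhibits $H^{\cdot}$ and $H^{\bullet}$ as Hopf subalgebras of the group Hopf algebra $\Bbbk G$; since the subcategory $\mathsf{GrpHopf}_{\Bbbk}$ of group Hopf algebras is closed under subobjects in $\mathsf{Hopf}_{\Bbbk,\mathrm{coc}}$ \cite{GKV,GSV}, both $H^{\cdot}$ and $H^{\bullet}$ are group Hopf algebras, i.e. $(H,\cdot,\bullet)\in\mathsf{SKB}$.

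Finally, for closure under regular quotients, let $q \colon (\Bbbk G,\cdot,\bullet) \to (B,\cdot,\bullet)$ be a regular epimorphism in $\CHBR$ with domain in $\mathsf{SKB}$. By Corollary \ref{lem:regepi}, $q$ is surjective, so the forgetful functors $F^{\cdot},F^{\bullet}$ send it to surjective morphisms $\Bbbk G \twoheadrightarrow B^{\cdot}$ and $\Bbbk G \twoheadrightarrow B^{\bullet}$, which are regular epimorphisms in $\mathsf{Hopf}_{\Bbbk,\mathrm{coc}}$ (Lemma \ref{lem:regfunctor}). Since $\mathsf{GrpHopf}_{\Bbbk}$ is a Birkhoff subcategory of $\mathsf{Hopf}_{\Bbbk,\mathrm{coc}}$, in particular closed under regular quotients \cite{GKV,GSV}, both $B^{\cdot}$ and $B^{\bullet}$ are group Hopf algebras, so $(B,\cdot,\bullet)\in\mathsf{SKB}$. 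Combining the three points, $\mathsf{SKB}$ is a full replete reflective subcategory of $\CHBR$ closed under subobjects and regular quotients, hence a Birkhoff subcategory.

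The only point requiring genuine verification beyond Theorem \ref{hereditary} and the general theory of torsion theories in homological categories is the closure under regular quotients, and even this reduces, via $F^{\cdot}$ and $F^{\bullet}$, to the known fact that group Hopf algebras form a Birkhoff subcategory of cocommutative Hopf algebras, so I do not anticipate a serious obstacle. The one caveat is that the hypotheses on $\Bbbk$ (algebraically closed of characteristic $0$) from Theorem \ref{hereditary} must remain in force, since they underpin both the torsion theory and the identification of the reflection of $(H,\cdot,\bullet)$ with $(\Bbbk G,\cdot,\bullet)$.
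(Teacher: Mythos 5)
Your proposal is correct and follows essentially the same route as the paper: reflectivity of $\mathsf{SKB}$ comes from the general theory of torsion theories in the homological category $\CHBR$ via Theorem \ref{hereditary}, and closure under regular quotients comes down to the fact that surjective morphisms preserve group-like elements (the paper states this directly, while you package it through the forgetful functors $F^{\cdot},F^{\bullet}$ and the Birkhoff property of group Hopf algebras in $\mathsf{Hopf}_{\Bbbk,\mathrm{coc}}$, which is the same argument). You are merely more explicit than the paper about closure under subobjects and about the standing hypotheses on $\Bbbk$, both of which the paper leaves implicit.
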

\begin{proof}
It is well known that, when $(\mathcal T, \mathcal F)$ is a torsion theory in a homological category $\mathcal C$, the torsion-free subcategory $\mathcal{F}$ is reflective in $\mathcal C$, with the property that each component of the unit of the adjunction is a normal epimorphism. With respect to the torsion theory $(\mathsf{PHBR}_{\mathrm{coc}},\mathsf{SKB}) $, given a cocommutative Hopf brace $H:=(H,\cdot,\bullet)$, the $H$-component of the unit of the adjunction is precisely the morphism $\pi $ in the short exact sequence \eqref{Short-HopfB}. The subcategory $\mathsf{SKB}$ is also closed in $\CHBR$ under regular quotients, since these latter are surjective morphisms of Hopf braces, and they preserve group-like elements.
\end{proof}
Observe that, as a consequence of the fact that $\mathsf{SKB}$ is a Birkhoff subcategory of $\mathsf{HBR}_{\mathrm{coc}}$, one obtains that $\mathsf{SKB}$ is semi-abelian, recovering so the correspondent well-known result for skew braces \cite{BFP}.

Recall that a reflective subcategory $\mathcal F$ of a finitely complete category $\mathcal C$
$$
\begin{tikzcd}
	{\mathcal{F}} & \perp & {\mathcal{C}}
	\arrow["U"', shift right=2, from=1-1, to=1-3]
	\arrow["{\textsf{F}}"', shift right=3, from=1-3, to=1-1]
\end{tikzcd}
$$
is a \emph{localization} of $\mathcal C $ if the left adjoint $\textsf{F} \colon \mathcal C \rightarrow \mathcal F$ preserves finite limits.
It turns out that $\mathsf{SKB}$ is a localization of $\CHBR$:

\begin{theorem}\label{local}
 The category $\mathsf{SKB}$ is a localization of $\CHBR$. 
\end{theorem}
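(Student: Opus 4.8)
## Proof Proposal

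The plan is to show that the reflector $\textsf{F} \colon \CHBR \to \mathsf{SKB}$, which by Corollary \ref{Birkhoff} sends a cocommutative Hopf brace $(H,\cdot,\bullet)$ to $(\Bbbk G(H),\cdot,\bullet)$ (the sub-Hopf brace spanned by the group-likes, with reflection unit $\pi$ as in \eqref{Short-HopfB}), preserves finite limits. Since $\textsf{F}$ is a left adjoint it preserves all colimits automatically, and it obviously preserves the terminal object $(\Bbbk,\cdot,\cdot)$ (the only group-like of $\Bbbk$ is $1$), so it suffices to prove that $\textsf{F}$ preserves pullbacks, or equivalently — since in the semi-abelian category $\CHBR$ finite limits are generated by the terminal object, binary products and kernels — that $\textsf{F}$ preserves binary products and kernels. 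Concretely this amounts to the assignment $H \mapsto \Bbbk G(H)$ commuting with these two constructions.

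First I would treat binary products. Given $(H,\cdot,\bullet)$ and $(K,\cdot,\bullet)$ in $\CHBR$, their product is $(H\otimes K,\cdot_{\otimes},\bullet_{\otimes})$, and a standard fact about coalgebras is that the group-likes of a tensor product of coalgebras are exactly the elementary tensors $g\otimes g'$ with $g\in G(H)$ and $g'\in G(K)$; thus $G(H\otimes K) = G(H)\times G(K)$ and $\Bbbk G(H\otimes K) = \Bbbk G(H)\otimes \Bbbk G(K)$ as a sub-Hopf brace of the product. This identifies $\textsf{F}(H\times K)$ with $\textsf{F}(H)\times\textsf{F}(K)$, compatibly with the projections, which are restrictions of $\mathrm{Id}\otimes\varepsilon$ and $\varepsilon\otimes\mathrm{Id}$. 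Next, for kernels: given $f\colon (A,\cdot,\bullet)\to(B,\cdot,\bullet)$ in $\CHBR$ with kernel $(\mathrm{Hker}(f),\cdot,\bullet)$, I must show $G(\mathrm{Hker}(f)) = \mathrm{Hker}(\textsf{F}(f))$, i.e. that the group-likes of $\mathrm{Hker}(f)$ form precisely the kernel of $f|_{\Bbbk G(A)}\colon \Bbbk G(A)\to \Bbbk G(B)$ in $\mathsf{SKB}$. One inclusion is immediate since $G$ is functorial and $\mathrm{Hker}$ is a limit; for the other, since $\Bbbk$ is algebraically closed, $A$ is pointed and decomposes as $\bigoplus_{g\in G(A)}A^{g}$, with $\mathrm{Hker}(f) = \bigoplus_{g\in\mathrm{Hker}(f|_{\Bbbk G(A)})}A^{g}$ by the description in the proof of Proposition \ref{short-exact} (the irreducible components lying over the group-likes killed by $f$), so a group-like of $\mathrm{Hker}(f)$, being a group-like of $A$ inside that sub-Hopf brace, already lies in $\Bbbk G(A)$ and is sent to $1$ by $f$. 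This gives the equality of underlying vector spaces, and since all the operations are restrictions the identification is an isomorphism in $\mathsf{SKB}$, naturally in $f$.

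The main obstacle is the kernel case: the delicate point is to verify that passing to group-likes commutes with forming $\mathrm{Hker}$, which really rests on the coradical decomposition $A = \bigoplus_{g\in G(A)}A^{g}$ over an algebraically closed field (Proposition 8.1.2 and Proposition 8.1.4 of \cite{Sw}, already invoked in the proof of Proposition \ref{short-exact}) together with the compatibility of this decomposition with both Hopf algebra structures and with the action $\rightharpoonup$. Once kernels and binary products are handled, one assembles a general finite limit as a subobject of a finite product equalizing a pair of parallel maps — or simply as a pullback, written via a product and a kernel using protomodularity — and concludes that $\textsf{F}$ preserves it. Hence $\textsf{F}$ is left exact and $\mathsf{SKB}$ is a localization of $\CHBR$. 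I would also remark that, under the additional hypothesis of characteristic $0$, this is consistent with the corresponding statement $\mathsf{GrpHopf}_{\Bbbk}$ being a localization of $\mathsf{Hopf}_{\Bbbk,\mathrm{coc}}$, transported along the torsion theory functors $F^{\cdot},F^{\bullet}$.
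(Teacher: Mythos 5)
There is a genuine gap at the very first step: you reduce the preservation of finite limits to the preservation of the terminal object, binary products and \emph{kernels}. In a non-additive setting this reduction is not available: finite limits are generated by finite products together with \emph{equalizers} (equivalently, pullbacks), and a kernel is only the very special equalizer of a morphism with the zero morphism. The equalizer of $f,g:A\to B$ is the pullback of the (non-normal) diagonal $B\to B\times B$ along $\langle f,g\rangle$, and preserving kernels says nothing about preserving such a pullback; likewise a general pullback is an equalizer, not a kernel, so the parenthetical ``written via a product and a kernel using protomodularity'' does not go through. This is precisely why the paper devotes the second half of its proof to a direct verification that $\mathsf{F}$ preserves equalizers, exploiting that each reflection unit $\pi$ is split by the inclusion $\gamma$ and that $\mathsf{F}$ preserves monomorphisms because the torsion theory $(\mathsf{PHBR}_{\mathrm{coc}},\mathsf{SKB})$ is hereditary. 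The good news is that your concrete method does extend: for the equalizer $E=\{a\in A\mid a_{1}\otimes f(a_{2})=a_{1}\otimes g(a_{2})\}$ one checks directly that $G(E)=\{x\in G(A)\mid f(x)=g(x)\}$, and that this set spans the equalizer of $\mathsf{F}(f)$ and $\mathsf{F}(g)$ in $\mathsf{SKB}$ (using the linear independence of group-likes in $\Bbbk G(A)$). Replacing ``kernels'' by ``equalizers'' throughout therefore repairs the proof and yields an argument genuinely different from, and more computational than, the paper's.

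A secondary problem is the identity $\mathrm{Hker}(f)=\bigoplus_{g\in \mathrm{Hker}(f|_{\Bbbk G(A)})}A^{g}$, which is false: for $f=\mathrm{Id}_{A}$ the left-hand side is $\Bbbk 1_{A}$, while the right-hand side is the irreducible component $A^{1}$, which equals all of $A$ when $A$ is primitively generated. (Proposition 8.1.4 of Sweedler describes the Hopf kernel of the particular projection $\pi:H\to\Bbbk G$, not of an arbitrary morphism.) Fortunately this claim is not needed: a group-like $x$ of $A$ lies in $\mathrm{Hker}(f)$ if and only if $x\otimes f(x)=x\otimes 1$, i.e.\ $f(x)=1_{B}$, which immediately gives $\Bbbk G(\mathrm{Hker}(f))=\mathrm{Hker}(\mathsf{F}(f))$. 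Your treatment of binary products via $G(H\otimes K)=G(H)\times G(K)$ is correct (the tensor product being the product of cocommutative coalgebras, and $G(-)=\mathsf{Coalg}(\Bbbk,-)$ preserving products) and is in fact more self-contained than the paper's appeal to the Birkhoff property and \cite[Lemma 8.2]{Gran-Galois}.
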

\begin{proof}
The fact that the reflector $\mathsf{F} \colon \CHBR \rightarrow \mathsf{SKB} $ preserves finite products follows from the fact that $\mathsf{SKB}$ is a Birkhoff subcategory (by Corollary \ref{Birkhoff}), by the arguments in the proof of \cite[Lemma 8.2]{Gran-Galois} and the fact that $\CHBR$ is a semi-abelian category. We still need to check that $\mathsf{F}  \colon \CHBR \rightarrow \mathsf{SKB} $ preserves equalizers. For this, consider the equalizer $e:(E,\cdot,\bullet)\to(A,\cdot,\bullet)$ of two morphisms $f,g:(A,\cdot,\bullet)\to(B,\cdot,\bullet)$ in $\CHBR$, and apply the reflector $\mathsf{F} \colon \CHBR \rightarrow \mathsf{SKB}$ getting the following commutative diagram
\[\begin{tikzcd}
	(E,\cdot,\bullet) & (A,\cdot,\bullet) & (B,\cdot,\bullet) \\
	(\Bbbk G_{E},\cdot,\bullet) & (\Bbbk G_{A},\cdot,\bullet) & (\Bbbk G_{B},\cdot,\bullet)
	\arrow[from=1-1, to=1-2, "e"]
	\arrow[shift right, from=1-1, to=2-1, "\pi_{E}"']
	\arrow[shift left, from=1-2, to=1-3, "f"]
	\arrow[shift right, from=1-2, to=1-3, "g"']
	\arrow[shift right, from=1-2, to=2-2, "\pi_{A}"']
	\arrow[shift right, from=1-3, to=2-3, "\pi_{B}"']
	\arrow[shift right, from=2-1, to=1-1, "\gamma_{E}"']
	\arrow[from=2-1, to=2-2, "\mathsf{F}(e)"']
	\arrow[shift right, from=2-2, to=1-2, "\gamma_{A}"']
	\arrow[shift left, from=2-2, to=2-3, "\mathsf{F}(f)"]
	\arrow[shift right, from=2-2, to=2-3, "\mathsf{F}(g)"']
	\arrow[shift right, from=2-3, to=1-3, "\gamma_{B}"']
\end{tikzcd}\]
where the lower row is the image by $\mathsf{F}$ of the upper one in the subcategory $\mathsf{SKB}$, the arrows $\pi_E$, $\pi_A$ and $\pi_B$ are the components of the unit of the adjunction (as $\pi $ in the short exact sequence \eqref{Short-HopfB}), while $\gamma_E$, $\gamma_A$ and $\gamma_B$ are the inclusions. Now, given any arrow $h \colon (H,\cdot,\bullet) \rightarrow (\Bbbk G_A,\cdot,\bullet)$ in $\mathsf{SKB}$ such that $\mathsf{F}(f) h =\mathsf{F}(g) h$, the commutativity of the diagram shows that $f\gamma_A h = g\gamma_A h$, hence there exists a unique $\phi \colon (H,\cdot,\bullet) \rightarrow (E,\cdot,\bullet)$ in $\CHBR$ such that $e \phi = \gamma_A h$. Using again the commutativity of the diagram and the fact that $\pi_{A}\gamma_{A}=\mathrm{Id}_{\Bbbk G_{A}}$, one then checks that the composite $\pi_E \phi \colon (H,\cdot,\bullet) \rightarrow (\Bbbk G_{E},\cdot,\bullet)$ is the desired factorization such that $\mathsf{F}(e) \pi_E \phi = h$. The uniqueness of the factorization follows from the fact that $\mathsf{F}(e)$ is a monomorphism in $\mathsf{SKB}$, since any reflector from a semi-abelian category to a torsion-free subcategory of a hereditary torsion theory preserves monomorphisms \cite{BG-TT}, and  the torsion theory $(\mathsf{PHBR}_{\mathrm{coc}}, \mathsf{SKB})$ is hereditary by Theorem \ref{hereditary}.
\end{proof}

An in-depth study of the category $\mathsf{PHBR}_{\mathrm{coc}}$ is worthwhile but lies beyond the scope of this paper. This will be addressed in future work.

\section{Strong protomodularity of the category $\CHBR$}

The category $\mathsf{Hopf}_{\Bbbk,\mathrm{coc}}$ is not only protomodular but also \textit{strongly protomodular} \cite{Bourn-00}, since it has finite limits and it is the category of internal groups in $\mathsf{Coalg}_{\Bbbk,\mathrm{coc}}$, see e.g. \cite{Street}. We extend this result to the category $\CHBR$.

\begin{theorem}\label{thm:strongproto}
    The category $\CHBR$ is strongly protomodular.
\end{theorem}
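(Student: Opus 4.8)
The plan is to verify the property defining strong protomodularity \emph{directly} in $\CHBR$, reducing the ``Hopf-algebra part'' of the statement to the strong protomodularity of $\mathsf{Hopf}_{\Bbbk,\mathrm{coc}}$ via the forgetful functors $F^{\cdot}, F^{\bullet}$, and then separately taking care of the invariance condition $A \rightharpoonup B \subseteq B$ that, by Proposition \ref{prop:normalmono} and Definition \ref{def:normalHopfbrace}, distinguishes kernels in $\CHBR$ from kernels in $\mathsf{Hopf}_{\Bbbk,\mathrm{coc}}$. Concretely, I would fix a diagram of the shape \eqref{SplitShort} in $\CHBR$ in which $u$ is a kernel, $B = B'$ and $w = \mathrm{Id}_B$; from $w = \mathrm{Id}_B$ the commutativity of the diagram gives $f'v = f$ and $s' = vs$. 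Let $(D, \cdot, \bullet)$ denote the image of $k'u = vk$ inside $(A', \cdot, \bullet)$; since $k'u$ is injective (a composite of kernels, hence of monomorphisms by Corollary \ref{lem:regepi}), it is a sub-Hopf brace isomorphic to $(\mathrm{Hker}(f), \cdot, \bullet)$, and what has to be proved is that $(D, \cdot, \bullet)$ is a \emph{normal} sub-Hopf brace of $(A', \cdot, \bullet)$, equivalently (Proposition \ref{prop:normalmono}) that $k'u$ is a kernel in $\CHBR$.

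First I would settle normality at the level of the two underlying Hopf algebras. The functors $F^{\cdot}, F^{\bullet} \colon \CHBR \to \mathsf{Hopf}_{\Bbbk,\mathrm{coc}}$ preserve finite limits, hence also kernels, zero morphisms and split epimorphisms; applying $F^{\cdot}$ to \eqref{SplitShort} therefore yields a diagram of the same shape in $\mathsf{Hopf}_{\Bbbk,\mathrm{coc}}$ in which $F^{\cdot}(u)$ is a kernel, $F^{\cdot}(w) = \mathrm{Id}$, and $F^{\cdot}(k)$, $F^{\cdot}(k')$ are the relevant kernels. Since $\mathsf{Hopf}_{\Bbbk,\mathrm{coc}}$ is strongly protomodular, $F^{\cdot}(k'u) = F^{\cdot}(k') \circ F^{\cdot}(u)$ is a kernel in $\mathsf{Hopf}_{\Bbbk,\mathrm{coc}}$, that is, $D^{\cdot}$ is a normal Hopf subalgebra of $A'^{\cdot}$; running the same argument with $F^{\bullet}$ shows that $D^{\bullet}$ is a normal Hopf subalgebra of $A'^{\bullet}$.

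The remaining --- and, I expect, the genuinely delicate --- point is to prove $A' \rightharpoonup D \subseteq D$, and this is where the semi-direct product decomposition of Proposition \ref{thm:points} enters. Since $k = \mathsf{ker}(f)$ and $u$ are kernels in $\CHBR$, Proposition \ref{prop:normalmono} together with Definition \ref{def:normalHopfbrace} gives $A \rightharpoonup \mathrm{Hker}(f) \subseteq \mathrm{Hker}(f)$ and $\mathrm{Hker}(f') \rightharpoonup D \subseteq D$ (the latter because $D = v(\mathrm{Hker}(f))$ is precisely the image of $u$, hence a normal sub-Hopf brace of $\mathrm{Hker}(f')$). Every morphism in $\CHBR$ commutes with $\rightharpoonup$, as $\rightharpoonup$ is built from $S$, $\cdot$ and $\bullet$ through \eqref{def:leftaction}; combining this with $s' = vs$, for each $b \in B$ and each $d = v(d_0) \in D$ with $d_0 \in \mathrm{Hker}(f)$ one gets $s'(b) \rightharpoonup d = v\bigl(s(b) \rightharpoonup d_0\bigr) \in v(\mathrm{Hker}(f)) = D$, so $s'(B) \rightharpoonup D \subseteq D$. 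Now $A'^{\cdot}$ is a module algebra over $A'^{\bullet}$, so $\rightharpoonup$ is a module action of the algebra $(A', \bullet)$ on $A'$, and hence $\{a \in A' \mid a \rightharpoonup D \subseteq D\}$ is a subalgebra of $(A', \bullet)$ containing $1$; by the two inclusions just obtained it contains $\mathrm{Hker}(f')$ and $s'(B)$. Finally, the $\bullet$-part of Proposition \ref{thm:points}, applied to the split epimorphism given by $f'$ and $s'$, exhibits $\mathrm{Hker}(f') \otimes B \to A'$, $x \otimes b \mapsto x \bullet s'(b)$, as a linear isomorphism, so $A'$ is generated as a $\bullet$-algebra by $\mathrm{Hker}(f') \cup s'(B)$; therefore $\{a \in A' \mid a \rightharpoonup D \subseteq D\} = A'$, i.e. $A' \rightharpoonup D \subseteq D$.

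Putting the two parts together, $D^{\cdot}$ and $D^{\bullet}$ are normal Hopf subalgebras of $A'^{\cdot}$ and $A'^{\bullet}$ and $A' \rightharpoonup D \subseteq D$, so by Proposition \ref{prop:normalmono} the inclusion $(D, \cdot, \bullet) \hookrightarrow (A', \cdot, \bullet)$ is a kernel, whence $k'u$ is a kernel in $\CHBR$ and $\CHBR$ is strongly protomodular. The only real obstacle in this plan is the invariance $A' \rightharpoonup D \subseteq D$: $D$ need not be $\rightharpoonup$-stable under all of $A'$ for formal reasons, and one genuinely has to use the $\bullet$-version of the semi-direct decomposition of $A'$ in order to reduce the verification to the two ``small'' pieces $\mathrm{Hker}(f')$ and $s'(B)$, on which stability is transparent.
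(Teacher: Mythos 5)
Your proof is correct and follows essentially the same route as the paper's: both deduce the normality of $D^{\cdot}$ and $D^{\bullet}$ from the strong protomodularity of $\mathsf{Hopf}_{\Bbbk,\mathrm{coc}}$ via the limit-preserving forgetful functors, and both prove $A'\rightharpoonup D\subseteq D$ by writing the elements of $A'$ as sums of $\bullet$-products $k\bullet s'(b)$ through the semi-direct decomposition of Proposition \ref{thm:points} and then invoking the $\rightharpoonup$-stability furnished by the kernels $k$ and $u$. Your packaging of the last step as ``the stabilizer of $D$ is a $\bullet$-subalgebra containing the generators'' is just a tidier phrasing of the paper's direct computation $a\rightharpoonup x=\sum_i k_i\rightharpoonup(\gamma(b_i)\rightharpoonup x)$.
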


\begin{proof}
    Consider the following commutative diagram in $\CHBR$
\[
\begin{tikzcd}
	(\mathrm{Hker}(\pi),\cdot,\bullet) & (A,\cdot,\bullet) & (B,\cdot,\bullet) \\
	(\mathrm{Hker}(\pi'),\cdot,\bullet) & (A',\cdot,\bullet) & (B,\cdot,\bullet)
	\arrow[hook, from=1-1, to=1-2, "j"]
	\arrow[hook, from=1-1, to=2-1, "u"']
	\arrow[shift left, from=1-2, to=1-3, "\pi"]
	\arrow[hook,from=1-2, to=2-2, "f"]
	\arrow[shift left, from=1-3, to=1-2, "\gamma"]
	\arrow[from=1-3, to=2-3, "\mathrm{Id}_{B}"]
	\arrow[hook, from=2-1, to=2-2, "j'"']
	\arrow[shift left, from=2-2, to=2-3, "\pi'"]
	\arrow[shift left, from=2-3, to=2-2, "\gamma'"]
\end{tikzcd}
\]
where $u$ is a kernel in $\CHBR$. In particular, $u:\mathrm{Hker}(\pi)^{\cdot}\to\mathrm{Hker}(\pi')^{\cdot}$ and $u:\mathrm{Hker}(\pi)^{\bullet}\to\mathrm{Hker}(\pi')^{\bullet}$ are kernels in $\mathsf{Hopf}_{\Bbbk,\mathrm{coc}}$ as well as $j':\mathrm{Hker}(\pi')^{\cdot}\hookrightarrow A'^{\cdot}$ and $j':\mathrm{Hker}(\pi')^{\bullet}\hookrightarrow A'^{\bullet}$. Hence, using the strong protomodularity for $\mathsf{Hopf}_{\Bbbk,\mathrm{coc}}$, we get that $j'u:\mathrm{Hker}(\pi)^{\cdot}\hookrightarrow A'^{\cdot}$ and $j'u:\mathrm{Hker}(\pi)^{\bullet}\hookrightarrow A'^{\bullet}$ are kernels in $\mathsf{Hopf}_{\Bbbk,\mathrm{coc}}$. By \cite[Corollary 2.3]{GSV} we have that \eqref{eq:normaldot} and \eqref{eq:normalbullet} are satisfied by $\mathrm{Hker}(\pi)^{\cdot}$ and $\mathrm{Hker}(\pi)^{\bullet}$, respectively. Hence, in order to prove that $(\mathrm{Hker}(\pi),\cdot,\bullet)$ is a normal sub-Hopf brace of $(A',\cdot,\bullet)$ or, equivalently, that $j'u:(\mathrm{Hker}(\pi),\cdot,\bullet)\hookrightarrow(A',\cdot,\bullet)$ is a kernel in $\CHBR$, it just remains to show that \eqref{closureunderharpoon} is satisfied for $\mathrm{Hker}(\pi)\subseteq A'$, i.e. $A'\rightharpoonup\mathrm{Hker}(\pi)\subseteq\mathrm{Hker}(\pi)$ (see Proposition \ref{prop:normalmono}). Take $a\in A'$ and $x\in\mathrm{Hker}(\pi)$. In $\mathsf{Hopf}_{\Bbbk,\mathrm{coc}}$, we have the isomorphism of Hopf algebras $\phi^{\bullet}:(\mathrm{Hker}(\pi')\otimes B)^{\bullet_{\#}}\to A'^{\bullet},\ k\otimes b\mapsto k\bullet\gamma'(b)$, so that 
\[
a=\sum_{i} {k_{i}\bullet\gamma'(b_{i})}=\sum_{i}{k_{i}\bullet f(\gamma(b_{i}))}=\sum_{i}{k_{i}\bullet\gamma(b_{i})}, 
\]
for some $k_{i}\in\mathrm{Hker}(\pi')$ and $b_{i}\in B$. Hence, we get
\[
a\rightharpoonup x=\sum_{i}{(k_{i}\bullet\gamma(b_{i})\rightharpoonup x)}=\sum_{i}{(k_{i}\rightharpoonup(\gamma(b_{i})\rightharpoonup x))}
\]
which is an element in
\[
\big(\mathrm{Hker}(\pi')\rightharpoonup(A\rightharpoonup\mathrm{Hker}(\pi))\big)\subseteq\big(\mathrm{Hker}(\pi')\rightharpoonup\mathrm{Hker}(\pi)\big)\subseteq\mathrm{Hker}(\pi),
\]
since $j$ and $u$ are kernels in $\CHBR$.
\end{proof}

As shown in \cite{BG}, an important consequence of the strong protomodularity is that, in the presence of a zero object, the centrality of two equivalence relations $R$ and $S$
in the sense of Smith \cite{Smith}  is equivalent to the fact that the two corresponding normal monomorphisms $N_R$ and $N_S$ ``commute'' in the sense of Huq \cite{Huq} (see the beginning of the next section). The coincidence of these two conditions is usually referred to as the ``Smith is Huq'' condition. We then have the following:

\begin{corollary}\label{SisH}
    The “Smith is Huq” condition holds in the category $\CHBR$.
\end{corollary}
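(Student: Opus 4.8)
The plan is to obtain this as an immediate consequence of the strong protomodularity of $\CHBR$ established in Theorem \ref{thm:strongproto}, via the general results of Bourn and Gran \cite{BG}. First I would recall that, by Theorem \ref{thm:semi-ab}, $\CHBR$ is semi-abelian, so that on any object $A$ one has at one's disposal both the Smith--Pedicchio commutator $[R,S]_{\mathsf{Smith}}$ of a pair of internal equivalence relations \cite{Smith, Pedicchio} and the Huq commutator $[N_R,N_S]_{\mathsf{Huq}}$ of the corresponding normal monomorphisms \cite{Huq}. Moreover, in a semi-abelian category one has $[R,S]_{\mathsf{Smith}}=0$ precisely when the pair $(R,S)$ admits a connector, and $[N_R,N_S]_{\mathsf{Huq}}=0$ precisely when $N_R$ and $N_S$ commute in the sense of Huq.

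Next I would invoke the theorem of \cite{BG}: in a finitely complete pointed category that is strongly protomodular, a pair of equivalence relations $(R,S)$ on an object admits a connector if and only if the associated normal monomorphisms commute in the sense of Huq. Since $\CHBR$ is pointed, finitely complete and strongly protomodular (Theorem \ref{thm:strongproto}), this equivalence is available in $\CHBR$; combining it with the two reformulations recalled above yields $[R,S]_{\mathsf{Smith}}=0 \iff [N_R,N_S]_{\mathsf{Huq}}=0$, which is exactly the ``Smith is Huq'' condition.

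There is no genuine obstacle here: the result is purely formal once Theorem \ref{thm:strongproto} is in hand, and the only point deserving a line of verification is that all the hypotheses of the cited results hold in $\CHBR$ --- it is pointed with zero object $(\Bbbk,\cdot,\cdot)$, finitely complete, semi-abelian, and strongly protomodular --- all of which has been checked in the preceding sections.
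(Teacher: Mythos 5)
Your argument is exactly the paper's: the corollary is deduced directly from Theorem \ref{thm:strongproto} together with the result of \cite{BG} that in a pointed strongly protomodular (semi-abelian) category two equivalence relations admit a connector if and only if the associated normal monomorphisms commute in the sense of Huq. The proposal is correct and matches the paper's route.
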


We'll now give a characterization of Huq commutators and central extensions in the category $\CHBR$.

\section{Central extensions and commutators in $\CHBR$}

In a pointed category $\Cc$ with binary products, one says that two subobjects $i:X\to A$ and $j:Y\to A$ of an object $A$ of $\Cc$ \textit{commute}, in the sense of Huq \cite{Huq}, if there exists an arrow $p$ making the following diagram commute
\begin{equation}\label{diag:comobject}
\begin{tikzcd}
	X & X\times Y & Y \\
	& A
	\arrow[from=1-1, to=1-2,"i_1"]
	\arrow[from=1-1, to=2-2, "i"']
	\arrow[dotted, from=1-2, to=2-2, "p"]
	\arrow[from=1-3, to=1-2,"i_2"']
	\arrow[from=1-3, to=2-2, "j"]
\end{tikzcd}
\end{equation}
where $i_1 = (1_X,0)$ and $i_2 = (0, 1_Y)$ are the canonical morphisms induced by the universal property of the product $X \times Y$.
If $\Cc$ is protomodular, such an arrow $p$ is unique, whenever it exists, see \cite{BG}. We analyze the diagram \eqref{diag:comobject} in $\CHBR$. 
\begin{remark}\label{rmk:commutingobject}
Since $\CHBR$ is protomodular (see Proposition \ref{prop:protomodularity}) if such a $p$ exists it must be unique. Moreover, we know that the binary product of $(X,\cdot,\bullet)$ and $(Y,\cdot,\bullet)$ in $\CHBR$ is given by $(X\otimes Y,\cdot_{\otimes},\bullet_{\otimes})$ and 
\[
i_{1}:(X,\cdot,\bullet)\to (X\otimes Y,\cdot_{\otimes},\bullet_{\otimes}),x\mapsto x\otimes1,\quad i_{2}:(Y,\cdot,\bullet)\to (X\otimes Y,\cdot_{\otimes},\bullet_{\otimes}),y\mapsto1\otimes y
\]
are the canonical morphisms induced by the product.
\end{remark}
We obtain the following result.

\begin{lemma}\label{lem:commutator}
    Let $(A,\cdot,\bullet)$ be a cocommutative Hopf brace and $(X,\cdot,\bullet)$, $(Y,\cdot,\bullet)$ two sub-Hopf braces of $(A,\cdot,\bullet)$. The following conditions are equivalent:
\begin{itemize}
    \item[1)] there exists a unique morphism $p:(X\otimes Y,\cdot_{\otimes},\bullet_{\otimes})\to(A,\cdot,\bullet)$ in $\CHBR$ such that the diagram \eqref{diag:comobject} commutes; \medskip
    \item[2)] $y\cdot x=x\cdot y=x\bullet y=y\bullet x$, for all $x\in X$ and $y\in Y$;\medskip
    \item[3)] $x_{1}\cdot y_{1}\cdot S(x_{2})\cdot S(y_{2})=S(x_{1})\cdot(x_{2}\bullet y_{1})\cdot S(y_{2})=x_{1}\bullet y_{1}\bullet T(x_{2})\bullet T(y_{2})=\varepsilon(x)\varepsilon(y)1$, for all $x\in X$ and $y\in Y$.
\end{itemize}
\end{lemma}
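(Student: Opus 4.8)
The strategy is to unwind the definition of the Huq commuting condition \eqref{diag:comobject} in the explicit model of binary products in $\CHBR$ given by the tensor product, and then translate the existence of the factorisation $p$ into equational conditions. The equivalence $1)\iff 2)$ will come from comparing $p$ with the two canonical maps; the equivalence $2)\iff 3)$ is a purely algebraic manipulation using the identities relating $\cdot$, $\bullet$, $S$, $T$ and $\rightharpoonup$ recalled in the Preliminaries.

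First I would prove $1)\implies 2)$. Suppose such a $p\colon (X\otimes Y,\cdot_{\otimes},\bullet_{\otimes})\to (A,\cdot,\bullet)$ exists. By Remark \ref{rmk:commutingobject}, commutativity of \eqref{diag:comobject} means $p(x\otimes 1)=i(x)$ and $p(1\otimes y)=j(y)$, where we regard $X,Y$ as sub-Hopf braces of $A$ so that $i,j$ are the inclusions. For $x\in X$, $y\in Y$ we compute, using that $p$ is an algebra map for $\cdot_{\otimes}$ and that $x\otimes y = (x\otimes 1)\cdot_{\otimes}(1\otimes y) = (1\otimes y)\cdot_{\otimes}(x\otimes 1)$ (the last equality holds because $(x\otimes 1)\cdot_{\otimes}(1\otimes y)= (x\cdot 1)\otimes (1\cdot y)=x\otimes y$ and likewise the other order), that
\[
x\cdot y = p(x\otimes 1)\cdot p(1\otimes y) = p(x\otimes y) = p(1\otimes y)\cdot p(x\otimes 1) = y\cdot x.
\]
Repeating the same argument with the multiplication $\bullet_{\otimes}$ (note $(x\otimes 1)\bullet_{\otimes}(1\otimes y)=(x\bullet 1)\otimes(1\bullet y)=x\otimes y$ as well) gives $x\bullet y = p(x\otimes y) = y\bullet x$. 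Since $p(x\otimes y)$ is a single well-defined element, all four expressions coincide, which is $2)$.

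Next, $2)\implies 1)$. Assuming $2)$, I would define $p\colon X\otimes Y\to A$ by $p:=m_\cdot\circ (i\otimes j)$, i.e.\ $p(x\otimes y)=x\cdot y$. It is clearly a coalgebra map (being a composite of coalgebra maps, using cocommutativity of $X\otimes Y$ only insofar as $m_\cdot$ is multiplicative on the coalgebra $A$), and $p\circ i_1 = i$, $p\circ i_2 = j$ because $x\cdot 1 = x$ and $1\cdot y = y$. It remains to check that $p$ is a morphism of Hopf braces, i.e.\ multiplicative for both $\cdot_{\otimes}$ and $\bullet_{\otimes}$ and unital. Multiplicativity for $\cdot_{\otimes}$:
\[
p\big((x\otimes y)\cdot_{\otimes}(x'\otimes y')\big)=p\big((x\cdot x')\otimes(y\cdot y')\big)=x\cdot x'\cdot y\cdot y',
\]
and $p(x\otimes y)\cdot p(x'\otimes y')=x\cdot y\cdot x'\cdot y'$; these agree once $y\cdot x' = x'\cdot y$, which is in $2)$. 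For $\bullet_{\otimes}$ one uses the same $p$: by $2)$ we also have $p(x\otimes y)=x\cdot y = x\bullet y$, so multiplicativity for $\bullet_{\otimes}$ reduces in the same way to $y\bullet x' = x'\bullet y$, again part of $2)$. Unitality $p(1\otimes 1)=1$ is clear. Uniqueness of $p$ is automatic since $\CHBR$ is protomodular (Remark \ref{rmk:commutingobject}).

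Finally, $2)\iff 3)$. This is the routine algebraic step and the main place where computations live. Starting from the identity $a\bullet b = a_1\cdot(a_2\rightharpoonup b)$ and the formula $a\rightharpoonup b = S(a_1)\cdot(a_2\bullet b)$, and using that in a Hopf algebra $c_1\cdot S(c_2)=\varepsilon(c)1$, one sees: the first expression in $3)$, $x_1\cdot y_1\cdot S(x_2)\cdot S(y_2)$, equals $\varepsilon(x)\varepsilon(y)1$ precisely when $x\cdot y = x_1\cdot y_1\cdot S(x_2)\cdot S(y_2)\cdot x_3\cdot y_3 = \ldots$ unwinds to $y\cdot x$ (multiply on the right by $x_3\cdot y_3$ and use cocommutativity); more directly, $x_1\cdot y_1\cdot S(x_2)\cdot S(y_2)=\varepsilon(x)\varepsilon(y)1$ is equivalent, after multiplying by $y\cdot x$ on the right and using cocommutativity and the antipode axiom, to $x\cdot y = y\cdot x$. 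The middle expression $S(x_1)\cdot(x_2\bullet y_1)\cdot S(y_2) = (x_1\rightharpoonup y_1)\cdot S(y_2)$ equals $\varepsilon(x)\varepsilon(y)1$ iff $x\rightharpoonup y = y$ iff (using $x\bullet y = x_1\cdot(x_2\rightharpoonup y)$ and $x\rightharpoonup 1 = \varepsilon(x)1$) $x\bullet y = x\cdot y$ for all such pairs; combined with the outer equalities one recovers the full string in $2)$. The third expression is the $\bullet$-analogue and is handled symmetrically using $T$ in place of $S$. Conversely, assuming $2)$, each of the three expressions collapses to $\varepsilon(x)\varepsilon(y)1$ by direct substitution and the antipode axioms. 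The one mild subtlety — and the step I expect to require the most care — is the bookkeeping with Sweedler indices and cocommutativity when passing between "$x\cdot y = y\cdot x$ for all $x\in X, y\in Y$'' and the comultiplication-twisted identities in $3)$, since $X$ and $Y$ are only sub-Hopf braces, not central elements, so one must genuinely use that the stated equalities hold for \emph{all} elements of $X$ and $Y$ (in particular for $x_1,x_2,\dots$) to push antipodes through.
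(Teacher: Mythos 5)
Your proposal is correct and follows essentially the same route as the paper: $1)\Leftrightarrow 2)$ by evaluating $p$ on $(x\otimes 1)\cdot_{\otimes}(1\otimes y)$ and $(1\otimes y)\cdot_{\otimes}(x\otimes 1)$ for both multiplications, and $2)\Leftrightarrow 3)$ by the convolution-type cancellation of antipodes (the paper outsources the first and third identities to \cite[Lemma 4.1]{GSV} and handles the middle one exactly as you sketch). Only typo-level slips: the middle expression simplifies to $(x\rightharpoonup y_{1})\cdot S(y_{2})$ (no Sweedler index on $x$), and its vanishing is equivalent to $x\rightharpoonup y=\varepsilon(x)y$ rather than $x\rightharpoonup y=y$; neither affects the argument.
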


\begin{proof}
$1)\iff2)$. By Remark \ref{rmk:commutingobject} we know that the morphism $p:(X\otimes Y,\cdot_{\otimes},\bullet_{\otimes})\to(A,\cdot,\bullet)$ in $\CHBR$ has to satisfy the properties $p(x\otimes1)=x$ for all $x\in X$ and $p(1\otimes y)=y$ for all $y\in Y$. Hence, since $p$ must be a morphism of algebras with respect to both the two algebra structures, we obtain
\[
p(x\otimes y)=p((x\otimes1)\cdot_{\otimes}(1\otimes y))=p(x\otimes1)\cdot p(1\otimes y)=x\cdot y,
\]
for all $x\in X$ and $y\in Y$ and, analogously, $p(x\otimes y)=x\bullet y$ for all $x\in X$ and $y\in Y$. Then, $x\bullet y=x\cdot y$ for all $x\in X$ and $y\in Y$. Moreover, we have
\[
x\cdot y=p(x\otimes y)=p((1\otimes y)\cdot_{\otimes}(x\otimes 1))=p(1\otimes y)\cdot p(x\otimes 1)=y\cdot x
\]
for all $x\in X$ and $y\in Y$. Analogously, we obtain $x\bullet y=y\bullet x$ for all $x\in X$ and $y\in Y$. Clearly, if 2) is satisfied, $p=m_{\cdot}|_{X\otimes Y}=m_{\bullet}|_{X\otimes Y}$ is a morphism in $\CHBR$ such that the diagram \eqref{diag:comobject} commutes. \medskip

\noindent $2)\iff3)$. Since $X^{\cdot}$ and $Y^{\cdot}$ are Hopf subalgebras of $A^{\cdot}$, and $X^{\bullet}$ and $Y^{\bullet}$ are Hopf subalgebras of $A^{\bullet}$ in $\mathsf{Hopf}_{\Bbbk,\mathrm{coc}}$ we get that the equations $y\cdot x=x\cdot y$ and $x\bullet y=y\bullet x$ for all $x\in X$, $y\in Y$ are, respectively, equivalent to the equations $x_{1}\cdot y_{1}\cdot S(x_{2})\cdot S(y_{2})=\varepsilon(x)\varepsilon(y)1$ and $x_{1}\bullet y_{1}\bullet T(x_{2})\bullet T(y_{2})=\varepsilon(x)\varepsilon(y)1$, for all $x\in X$, $y\in Y$, using \cite[Lemma 4.1, $(b)\iff (c)$]{GSV}. It just remains to show that $x\cdot y=x\bullet y$ for all $x\in X$, $y\in Y$ is equivalent to $S(x_{1})\cdot(x_{2}\bullet y_{1})\cdot S(y_{2})=\varepsilon(x)\varepsilon(y)1$ for all $x\in X$, $y\in Y$. Clearly, if we assume that $x\cdot y=x\bullet y$ for all $x\in X$, $y\in Y$ then $$S(x_{1})\cdot(x_{2}\bullet y_{1})\cdot S(y_{2})=S(x_{1})\cdot x_{2}\cdot y_{1}\cdot S(y_{2})=\varepsilon(x)\varepsilon(y)1.$$ On the other hand, we have
\[
\begin{split}
    x\bullet y&=\varepsilon(x_{1})(x_{2}\bullet y_{1})\varepsilon(y_{2})=x_{1}\cdot S(x_{2})\cdot(x_{3}\bullet y_{1})\cdot S(y_{2})\cdot y_{3}\\&\overset{(!)}{=}x_{1}\cdot\varepsilon(x_{2})\varepsilon(y_{1})1\cdot y_{2}=x\cdot y,
\end{split}
\]
where $(!)$ follows assuming $S(x_{1})\cdot(x_{2}\bullet y_{1})\cdot S(y_{2})=\varepsilon(x)\varepsilon(y)1$ for all $x\in X$ and $y\in Y$.
\end{proof}

\begin{remark}
Since $x\bullet y=x_{1}\cdot(x_{2}\rightharpoonup y)$, from $x\bullet y=x\cdot y$ we obtain
\[
\varepsilon(x)y=S(x_{1})\cdot x_{2}\cdot y=S(x_{1})\cdot x_{2}\cdot(x_{3}\rightharpoonup y)=x\rightharpoonup y,
\]
i.e. the action $\rightharpoonup$ of $X$ on $Y$ is trivial. Analogously, from $y\cdot x=y\bullet x$, we get that the action $\rightharpoonup$ of $Y$ on $X$ is trivial.
\end{remark}

\noindent\textbf{Central extensions}. In any semi-abelian category $\mathcal C$ the subcategory $\mathsf{Ab}(\mathcal C)$ of abelian objects in $\mathcal C$ (=internal abelian groups in $\mathcal C$) is a Birkhoff subcategory of $\mathcal C$
$$
\begin{tikzcd}
	{\mathsf{Ab}(\mathcal C)} & \perp & {\mathcal{C}.}
	\arrow["U"', shift right=2, from=1-1, to=1-3]
	\arrow["{\textsf{ab}}"', shift right=3, from=1-3, to=1-1]
\end{tikzcd}
$$ This adjunction induces an \textit{admissible Galois structure} that has been investigated, in the semi-abelian context, in \cite{BG2}. There is a natural notion of central extension in $\mathcal C$ relative to this Galois structure, that turns out to be equivalent to the notion considered in commutator theory, that can be described as a regular epimorphism $f \colon A \rightarrow B$ such that its kernel $\mathsf{Ker}(f) \rightarrow A$ ``commutes'' in the sense of Huq with the largest subobject $1_A \colon A \rightarrow A$ of $A$. In the special case of the semi-abelian category $\CHBR$, where regular epimorphisms coincide with surjective morphisms by Corollary \ref{lem:regepi}, this means that, for a given surjective morphism $f:(A,\cdot,\bullet)\to(B,\cdot,\bullet)$, there is a (unique) morphism $p:(\mathrm{Hker}(f)\otimes A,\cdot_{\otimes},\bullet_{\otimes})\to (A,\cdot,\bullet)$ in $\CHBR$ making the following diagram commute:
$$
\begin{tikzcd}
	(\mathrm{Hker}(f),\cdot,\bullet) & (\mathrm{Hker}(f) \otimes A,\cdot_{\otimes},\bullet_{\otimes}) & (A,\cdot,\bullet). \\
	& (A,\cdot,\bullet)
	\arrow[from=1-1, to=1-2,"i_1"]
	\arrow[hook, from=1-1, to=2-2]
	\arrow[dotted, from=1-2, to=2-2, "p"]
	\arrow[from=1-3, to=1-2,"i_2"']
	\arrow[from=1-3, to=2-2, "1_A"]
\end{tikzcd}
$$
 By using Lemma \ref{lem:commutator} one sees that this condition on the kernel of $f$ is equivalently expressed by the simple properties:
$$ a \cdot k = k \cdot a = k \bullet a = a \bullet k , \qquad \text{for all}\ k \in \mathrm{Hker}(f),\ a \in A.$$
One can then introduce the following notion:
\begin{definition}
A surjective morphism $f:(A,\cdot,\bullet)\to(B,\cdot,\bullet)$ in $\CHBR$ is a \emph{central extension} if the following identities 
\[
a \cdot k = k \cdot a = k \bullet a =a \bullet k
\]
hold for all $k \in \mathrm{Hker}(f)$ and $a \in A$.
\end{definition} 
As it follows from the results in \cite{BG2}, the category of central extensions in $\CHBR$ is then equivalent to the category of (internal) connected groupoids in $\CHBR$.
One can also define the so-called \emph{Baer sum} of two central extensions, as explained at the beginning of \cite[Section 6]{GVdL}, for instance, that equips the set of (isomorphism classes of) central extensions with an abelian group structure. Observe that a notable simplification of these developments, that we leave for future work, is due to the validity of the ``Smith is Huq'' condition in $\CHBR$ (Corollary \ref{SisH}). This has already been observed in the special case of skew braces in \cite{Bourn-Skew}.  \medskip

\noindent\textbf{Huq commutator}.
In a semi-abelian category $\Cc$, the \textit{Huq commutator} of two normal subobjects $i:X\to A$ and $j:Y\to A$ in $\Cc$ is the smallest normal subobject $K\to A$ such that its cokernel $q:A\to A/K$ has the property
that the images $q(X)$ and $q(Y)$ by $q$ commute in the quotient, in the sense of Huq. \medskip

Accordingly, in the category $\CHBR$, the Huq commutator of two normal sub-Hopf braces $(X,\cdot,\bullet)$ and $(Y,\cdot,\bullet)$ of a cocommutative Hopf brace $(A,\cdot,\bullet)$ is the smallest normal sub-Hopf brace $([X,Y]_{\mathrm{Huq}},\cdot,\bullet)\hookrightarrow(A,\cdot,\bullet)$ such that, given its cokernel $\pi:(A,\cdot,\bullet)\to\Big(\frac{A}{A\cdot[X,Y]^{+}_{\mathrm{Huq}}}=\frac{A}{A\bullet[X,Y]^{+}_{\mathrm{Huq}}},\overline{\cdot},\overline{\bullet}\Big)$ in $\CHBR$ (see Proposition \ref{prop:normalmono}), the normal (see Lemma \ref{lem:imagekernel}) sub-Hopf braces $(\pi(X),\overline{\cdot},\overline{\bullet})$ and $(\pi(Y),\overline{\cdot},\overline{\bullet})$ commute in the sense of Huq:
\footnotesize
\[\begin{tikzcd}
	(X,\cdot,\bullet) & (Y,\cdot,\bullet) & (\pi(X),\overline{\cdot},\overline{\bullet}) & (\pi(X)\otimes\pi(Y),\overline{\cdot}_{\otimes},\overline{\bullet}_{\otimes}) & (\pi(Y),\overline{\cdot},\overline{\bullet}) \\
	([X,Y]_{\mathrm{Huq}},\cdot,\bullet) & (A,\cdot,\bullet) && \Big(\frac{A}{A\cdot[X,Y]^{+}_{\mathrm{Huq}}},\overline{\cdot},\overline{\bullet}\Big)
	\arrow[hook, from=1-1, to=2-2]
	\arrow[hook', from=1-2, to=2-2]
	\arrow[from=1-3, to=1-4, "i_{1}"]
	\arrow[hook, from=1-3, to=2-4]
	\arrow[dashed, from=1-4, to=2-4, "p"]
	\arrow[from=1-5, to=1-4, "i_{2}"']
	\arrow[hook', from=1-5, to=2-4]
	\arrow[hook, from=2-1, to=2-2]
	\arrow[from=2-2, to=2-4, "\pi"']
\end{tikzcd}\]
\normalsize
By Lemma \ref{lem:commutator}, the existence of $p$ as in the above diagram in $\CHBR$ is equivalent to the equalities
\[
y\cdot x+A\cdot[X,Y]^{+}_{\mathrm{Huq}}=x\cdot y+A\cdot[X,Y]^{+}_{\mathrm{Huq}}=x\bullet y+A\cdot[X,Y]^{+}_{\mathrm{Huq}}=y\bullet x+A\cdot[X,Y]^{+}_{\mathrm{Huq}}, 
\]
for all $x\in X$ and $y\in Y$ and then to the conditions
\begin{equation}\label{cond:comm}
x\cdot y-y\cdot x\in A\cdot[X,Y]^{+}_{\mathrm{Huq}},\quad x\cdot y-x\bullet y\in A\cdot[X,Y]^{+}_{\mathrm{Huq}},\quad x\bullet y-y\bullet x\in A\bullet[X,Y]^{+}_{\mathrm{Huq}}.
\end{equation}
We shall now give an explicit description of the Huq commutator of two normal sub-Hopf braces $(X,\cdot,\bullet)$ and $(Y,\cdot,\bullet)$ of a cocommutative Hopf brace $(A,\cdot,\bullet)$. We first give the following definition.

\begin{definition}
    Let $(H,\cdot,\bullet)$ be a Hopf brace and $X$ a set. The sub-Hopf brace \textit{generated} by $X$, denoted by $(\langle X\rangle,\cdot,\bullet)$, is the intersection of all the sub-Hopf braces of $(H,\cdot,\bullet)$ containing $X$. The normal sub-Hopf brace of $(H,\cdot,\bullet)$ generated by $X$, denoted by $(\langle X\rangle_{N},\cdot,\bullet)$, is the intersection of all the normal sub-Hopf braces of $(H,\cdot,\bullet)$ containing $X$.
\end{definition}

\begin{remark}
    The intersection of Hopf subalgebras of a given Hopf algebra is clearly a Hopf subalgebra, hence the intersection of sub-Hopf braces of a given Hopf brace is clearly a sub-Hopf brace. 
    So $(\langle X\rangle,\cdot,\bullet)$ is the smallest sub-Hopf brace of $(H,\cdot,\bullet)$ containing $X$. Moreover, the intersection of normal Hopf subalgebras is a normal Hopf subalgebra, hence the intersection of normal sub-Hopf braces is also a normal sub-Hopf brace. Thus $(\langle X\rangle_{N},\cdot,\bullet)$ is the smallest normal sub-Hopf brace of $(H,\cdot,\bullet)$ containing $X$.
\end{remark}

We define the normal sub-Hopf brace $([X,Y],\cdot,\bullet)$ of $(A,\cdot,\bullet)$ where:
\[
[X,Y]:=\langle\{x_{1}\cdot y_{1}\cdot S(x_{2})\cdot S(y_{2}), S(x_{1})\cdot(x_{2}\bullet y_{1})\cdot S(y_{2}), x_{1}\bullet y_{1}\bullet T(x_{2})\bullet T(y_{2})\ |\ x\in X,y\in Y\}\rangle_{N}.
\]
We obtain the following result.

\begin{proposition}\label{HCommutator}
    Let $(X,\cdot,\bullet)$ and $(Y,\cdot,\bullet)$ be two normal sub-Hopf braces of a cocommutative Hopf brace $(A,\cdot,\bullet)$. Then, $[X,Y]=[X,Y]_{\mathrm{Huq}}$.
\end{proposition}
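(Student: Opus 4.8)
The plan is to show the two normal sub-Hopf braces $([X,Y],\cdot,\bullet)$ and $([X,Y]_{\mathrm{Huq}},\cdot,\bullet)$ of $(A,\cdot,\bullet)$ coincide by establishing mutual containment. For the inclusion $[X,Y]_{\mathrm{Huq}}\subseteq[X,Y]$, I would first verify that $[X,Y]$ is a normal sub-Hopf brace through which the images of $X$ and $Y$ commute in the sense of Huq, and then invoke minimality of $[X,Y]_{\mathrm{Huq}}$. Concretely, set $\pi\colon(A,\cdot,\bullet)\to(A/A\cdot[X,Y]^{+},\overline\cdot,\overline\bullet)$ to be the cokernel of the inclusion $[X,Y]\hookrightarrow A$ in $\CHBR$ (which exists by Proposition~\ref{prop:normalmono}, since $A\cdot[X,Y]^{+}=A\bullet[X,Y]^{+}$ is a Hopf brace ideal). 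By construction the three families of generators of $[X,Y]$ lie in $[X,Y]$, hence their images under $\pi$ are $\varepsilon(x)\varepsilon(y)1$; by the equivalence $2)\iff 3)$ of Lemma~\ref{lem:commutator} applied inside the quotient, $(\pi(X),\overline\cdot,\overline\bullet)$ and $(\pi(Y),\overline\cdot,\overline\bullet)$ commute in the sense of Huq. Therefore $[X,Y]$ is one of the normal sub-Hopf braces whose cokernel makes the images of $X,Y$ commute, so the smallest such, namely $[X,Y]_{\mathrm{Huq}}$, is contained in $[X,Y]$.

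For the reverse inclusion $[X,Y]\subseteq[X,Y]_{\mathrm{Huq}}$, I would use the characterization of the commuting condition recorded just before the statement: the existence of the comparison morphism $p$ for $\pi\colon(A,\cdot,\bullet)\to(A/A\cdot[X,Y]^{+}_{\mathrm{Huq}},\overline\cdot,\overline\bullet)$ is equivalent to the three membership conditions \eqref{cond:comm}, i.e. $x\cdot y-y\cdot x\in A\cdot[X,Y]^{+}_{\mathrm{Huq}}$, $x\cdot y-x\bullet y\in A\cdot[X,Y]^{+}_{\mathrm{Huq}}$ and $x\bullet y-y\bullet x\in A\bullet[X,Y]^{+}_{\mathrm{Huq}}$ for all $x\in X$, $y\in Y$. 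From these I need to deduce that each generator of $[X,Y]$ lies in $A\cdot[X,Y]^{+}_{\mathrm{Huq}}$ — equivalently, that its image under $\pi$ is $\varepsilon(x)\varepsilon(y)1$. This is exactly the content of the computation in the proof of Lemma~\ref{lem:commutator}, $2)\iff 3)$, carried out in the quotient: using \cite[Lemma~4.1, $(b)\iff(c)$]{GSV} for the Hopf algebras $(A/A\cdot[X,Y]^{+}_{\mathrm{Huq}})^{\overline\cdot}$ and $(A/A\cdot[X,Y]^{+}_{\mathrm{Huq}})^{\overline\bullet}$, the condition $\pi(x)\overline\cdot\pi(y)=\pi(y)\overline\cdot\pi(x)$ is equivalent to $\pi(x_{1})\overline\cdot\pi(y_{1})\overline\cdot\overline{S}\pi(x_{2})\overline\cdot\overline{S}\pi(y_{2})=\varepsilon(x)\varepsilon(y)1$, i.e. to $x_{1}\cdot y_{1}\cdot S(x_{2})\cdot S(y_{2})\in A\cdot[X,Y]^{+}_{\mathrm{Huq}}$, and similarly for the other two generators. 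Since $[X,Y]$ is the \emph{smallest} normal sub-Hopf brace containing all these generators, and $[X,Y]_{\mathrm{Huq}}$ is a normal sub-Hopf brace (by definition) containing all of them, we get $[X,Y]\subseteq[X,Y]_{\mathrm{Huq}}$.

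Combining the two inclusions gives $[X,Y]=[X,Y]_{\mathrm{Huq}}$. The step I expect to be the main obstacle is the careful bookkeeping in the second inclusion: one must translate the membership relations \eqref{cond:comm}, which only assert that certain \emph{differences} $x\cdot y-y\cdot x$ etc.\ lie in the ideal, into the statement that the full ``group-commutator-type'' generators $x_{1}\cdot y_{1}\cdot S(x_{2})\cdot S(y_{2})$ lie there; this requires reproducing, modulo the ideal, the Sweedler-notation manipulation from Lemma~\ref{lem:commutator}'s proof (the line marked $(!)$ there) and checking that each intermediate term is handled by one of the three conditions in \eqref{cond:comm}, using cocommutativity and that $X^{\overline\cdot},Y^{\overline\cdot}$ (resp.\ $X^{\overline\bullet},Y^{\overline\bullet}$) are Hopf subalgebras of the quotient. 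Everything else is a direct appeal to Proposition~\ref{prop:normalmono}, Lemma~\ref{lem:commutator}, and the minimality built into the two definitions.
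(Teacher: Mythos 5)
Your overall architecture is the same as the paper's: prove $[X,Y]_{\mathrm{Huq}}\subseteq[X,Y]$ by checking that the quotient by $[X,Y]$ makes the images of $X$ and $Y$ commute (your route through condition $3)$ of Lemma \ref{lem:commutator} in the quotient is fine and matches the paper's use of \eqref{cond:comm}), and prove $[X,Y]\subseteq[X,Y]_{\mathrm{Huq}}$ by showing the generators land in the kernel of the quotient by $[X,Y]_{\mathrm{Huq}}$ and invoking minimality of $\langle-\rangle_{N}$. The gap is in the second inclusion, where you conflate two different kinds of ``kernel''. From the commutation of $\pi(X)$ and $\pi(Y)$ you correctly extract that $\pi\bigl(x_{1}\cdot y_{1}\cdot S(x_{2})\cdot S(y_{2})\bigr)=\varepsilon(x)\varepsilon(y)1$ (and similarly for the other two families), i.e.\ each generator is congruent to a scalar modulo the Hopf ideal $A\cdot[X,Y]^{+}_{\mathrm{Huq}}=\ker(\pi)$. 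But your final minimality step needs each generator $g$ to lie in the normal sub-Hopf brace $[X,Y]_{\mathrm{Huq}}=\mathrm{Hker}(\pi)$, i.e.\ to satisfy $g_{1}\otimes\pi(g_{2})=g\otimes 1$, which is strictly stronger than $\pi(g)\in\Bbbk 1$: in general $\mathrm{Hker}(\pi)\subsetneq\pi^{-1}(\Bbbk 1)$ (for primitives $x,z$ with $\pi(x)=0$, $\pi(z)=z$, the element $xz$ satisfies $\pi(xz)=0=\varepsilon(xz)1$ but $(xz)_{1}\otimes\pi((xz)_{2})=xz\otimes 1+x\otimes z\neq xz\otimes1$). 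So ``$[X,Y]_{\mathrm{Huq}}$ is a normal sub-Hopf brace containing all of them'' does not follow from what you established, and the argument breaks at the last line.

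The gap is fillable, and filling it is precisely what the paper's computation does. Since $\pi\bigl(x_{1}\cdot y_{1}\cdot S(x_{2})\cdot S(y_{2})\bigr)=\varepsilon(x)\varepsilon(y)1$ holds for \emph{all} $x\in X$, $y\in Y$ as an identity of linear maps on $X\otimes Y$, you may apply it to the inner Sweedler legs of $\Delta(g)$ and use cocommutativity to get
\[
g_{1}\otimes\pi(g_{2})=\bigl(x_{1}\cdot y_{1}\cdot S(x_{2})\cdot S(y_{2})\bigr)\otimes\pi\bigl(x_{3}\cdot y_{3}\cdot S(x_{4})\cdot S(y_{4})\bigr)=g\otimes 1,
\]
which is the required Hopf-kernel condition. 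The paper carries this out explicitly for the mixed generator $S(x_{1})\cdot(x_{2}\bullet y_{1})\cdot S(y_{2})$ (working with an arbitrary surjection $f$ under which the images commute) and cites \cite[Proposition 4.3]{GSV} for the $\cdot$- and $\bullet$-commutator families. You need to insert this upgrade from ``image is a scalar'' to ``lies in $\mathrm{Hker}$'' before your concluding sentence; everything else in your plan agrees with the paper's proof.
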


\begin{proof}
By definition we know that $([X,Y],\cdot,\bullet)$ is the smallest normal sub-Hopf brace of $(A,\cdot,\bullet)$ containing $X$. We consider its cokernel in $\CHBR$, which is given by $\pi:(A,\cdot,\bullet)\to\Big(\frac{A}{A\cdot[X,Y]^{+}}=\frac{A}{A\bullet[X,Y]^{+}},\overline{\cdot},\overline{\bullet}\Big)$, see Proposition \ref{prop:normalmono}. First we show that the normal (see Lemma \ref{lem:imagekernel}) sub-Hopf braces $(\pi(X),\overline{\cdot},\overline{\bullet})$ and $(\pi(Y),\overline{\cdot},\overline{\bullet})$ commute in the sense of Huq. As we already observed, in $\CHBR$, this is equivalent to the three conditions \eqref{cond:comm}. As in the proof of \cite[Proposition 4.3]{GSV} one has
\[
x\cdot y-y\cdot x=x_{1}\cdot y_{1}\cdot\big(\varepsilon(x_{2})\varepsilon(y_{2})1-S(y_{2})\cdot S(x_{2})\cdot y_{3}\cdot x_{3}\big)\in A\cdot[X,Y]^{+},
\]
by definition of $[X,Y]^{+}$ and, analogously, $x\bullet y-y\bullet x\in A\bullet[X,Y]^{+}$. Moreover, we have
\[
\begin{split}
    x\cdot y-x\bullet y=x_{1}\cdot\big(\varepsilon(x_{2})\varepsilon(y_{1})\varepsilon(y_{2})1-S(x_{2})\cdot(x_{3}\bullet y_{1})\cdot S(y_{2})\big)\cdot y_{3}\in A\cdot[X,Y]^{+},
\end{split}
\]
by definition of $[X,Y]^{+}$. Finally, we show that $([X,Y],\cdot,\bullet)$ is the smallest normal sub-Hopf brace satisfying \eqref{cond:comm}, i.e. that it factorizes through the categorical kernel of any surjective morphism of Hopf braces for which the images of $(X,\cdot,\bullet)$ and $(Y,\cdot,\bullet)$ commute. Let $f:(A,\cdot,\bullet)\to(B,\cdot,\bullet)$ be any surjective morphism in $\CHBR$ such that $(f(X),\cdot,\bullet)\hookrightarrow(B,\cdot,\bullet)$ and $(f(Y),\cdot,\bullet)\hookrightarrow(B,\cdot,\bullet)$ commute. As in the proof of \cite[Proposition 4.3]{GSV} one can show that $x_{1}\cdot y_{1}\cdot S(x_{2})\cdot S(y_{2})\in\mathrm{Hker}(f)$ and, analogously, $x_{1}\bullet y_{1}\bullet T(x_{2})\bullet T(y_{2})\in\mathrm{Hker}(f)$, for all $x\in X$ and $y\in Y$. It only remains to prove that $S(x_{1})\cdot(x_{2}\bullet y_{1})\cdot S(y_{2})\in\mathrm{Hker}(f)$, for all $x\in X$ and $y\in Y$. Using the cocommutativity we have:
\[
\begin{split}
    \big(S(x_{1})\cdot(x_{2}\bullet y_{1})\cdot &S(y_{2})\big)\otimes f(S(x_{3})\cdot(x_{4}\bullet y_{3})\cdot S(y_{4}))\\&= \big(S(x_{1})\cdot(x_{2}\bullet y_{1})\cdot S(y_{2})\big)\otimes\big( f(S(x_{3}))\cdot(f(x_{4})\bullet f(y_{3}))\cdot f(S(y_{4}))\big)\\&\overset{(!)}{=}\big(S(x_{1})\cdot(x_{2}\bullet y_{1})\cdot S(y_{2})\big)\otimes\big( f(S(x_{3}))\cdot f(x_{4})\cdot f(y_{3})\cdot f(S(y_{4}))\big)\\&=\big(S(x_{1})\cdot(x_{2}\bullet y_{1})\cdot S(y_{2})\big)\otimes1,
\end{split}
\]
where $(!)$ follows using that $f(x)\cdot f(y)=f(x)\bullet f(y)$ for all $x\in X$ and $y\in Y$. Hence the Huq commutator of $(X,\cdot,\bullet)$ and $(Y,\cdot,\bullet)$ is $([X,Y],\cdot,\bullet)$.
\end{proof}

\begin{corollary}
The category $\mathsf{Ab} (\CHBR)$ is a Birkhoff subcategory of $\CHBR$.
\end{corollary}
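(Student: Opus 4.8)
The plan is to combine the concrete description of the abelian objects in Proposition \ref{abelian} with the explicit formula for the Huq commutator in Proposition \ref{HCommutator}. Since $\CHBR$ is semi-abelian by Theorem \ref{thm:semi-ab}, it suffices to check three things: that $\mathsf{Ab}(\CHBR)$ is a full replete reflective subcategory of $\CHBR$, that it is closed under subobjects, and that it is closed under regular quotients (i.e. surjective morphisms, by Corollary \ref{lem:regepi}). Fullness and repleteness are immediate from the definition of $\mathsf{Ab}(\CHBR)$.

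For closure under subobjects and regular quotients I would use Proposition \ref{abelian}, which says that $(H,\cdot,\bullet)\in\mathsf{Ab}(\CHBR)$ exactly when its two multiplications coincide and are commutative. If $(B,\cdot,\bullet)$ is a sub-Hopf brace of such an $(H,\cdot,\bullet)$, then the two operations of $B$ are restrictions of those of $H$, hence still equal and commutative, so $(B,\cdot,\bullet)\in\mathsf{Ab}(\CHBR)$. If $f\colon(A,\cdot,\bullet)\to(C,\cdot,\bullet)$ is a surjective morphism in $\CHBR$ with $(A,\cdot,\bullet)$ abelian, then for all $a,b\in A$ one has $f(a)\cdot f(b)=f(a\cdot b)=f(a\bullet b)=f(a)\bullet f(b)$ and $f(a)\cdot f(b)=f(a\cdot b)=f(b\cdot a)=f(b)\cdot f(a)$; surjectivity of $f$ then forces the two products of $C$ to coincide and to be commutative, so $(C,\cdot,\bullet)\in\mathsf{Ab}(\CHBR)$ by Proposition \ref{abelian}.

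For reflectivity I would apply Proposition \ref{HCommutator} with $X=Y=(A,\cdot,\bullet)$ (which is trivially a normal sub-Hopf brace of itself) to obtain the normal sub-Hopf brace $([A,A],\cdot,\bullet)$, and take the reflection of $(A,\cdot,\bullet)$ to be its cokernel $\pi\colon(A,\cdot,\bullet)\to\big(A/A\cdot[A,A]^{+},\overline{\cdot},\overline{\bullet}\big)$ in $\CHBR$ (Proposition \ref{prop:normalmono}). By construction and the equivalence $1)\iff 2)$ of Lemma \ref{lem:commutator}, the images $\pi(A)$ and $\pi(A)$ commute in the sense of Huq, which by Proposition \ref{abelian} is precisely the assertion that the codomain of $\pi$ is an abelian object. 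For the universal property, given any morphism $g\colon(A,\cdot,\bullet)\to(M,\cdot,\bullet)$ with $(M,\cdot,\bullet)\in\mathsf{Ab}(\CHBR)$, the image $(g(A),\cdot,\bullet)$ is a subobject of $(M,\cdot,\bullet)$, hence abelian by the previous step, so the surjection $A\twoheadrightarrow g(A)$ witnesses that the images of $X=Y=A$ commute in $g(A)$; the minimality clause of Proposition \ref{HCommutator} then gives $([A,A],\cdot,\bullet)\subseteq(\mathrm{Hker}(g),\cdot,\bullet)$, whence $A\cdot[A,A]^{+}\subseteq A\cdot\mathrm{Hker}(g)^{+}=\mathrm{ker}(g)$ (using the identification from the proof of Proposition \ref{prop:factorization}), so $g$ factors uniquely through the epimorphism $\pi$. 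This exhibits $\mathsf{Ab}(\CHBR)$ as reflective and finishes the verification.

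The only mildly delicate step is checking that the codomain of $\pi$ is genuinely abelian, i.e. translating the Huq-commuting condition produced by Proposition \ref{HCommutator} into the description of $\mathsf{Ab}(\CHBR)$ from Proposition \ref{abelian}; this is handled at once by Lemma \ref{lem:commutator}. Everything else is routine. One could also simply invoke the general fact, recalled in the paragraph on central extensions, that $\mathsf{Ab}(\mathcal C)$ is a Birkhoff subcategory of every semi-abelian category $\mathcal C$, together with Theorem \ref{thm:semi-ab}; I would nevertheless keep the argument above in order to make the reflector $A\mapsto A/A\cdot[A,A]^{+}$ explicit.
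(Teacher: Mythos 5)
Your proposal is correct and follows essentially the same route as the paper: the paper simply invokes the well-known fact that abelian objects form a Birkhoff subcategory of any semi-abelian category and identifies the reflector as the quotient $A\mapsto A/A\cdot[A,A]^{+}_{\mathrm{Huq}}$ via Proposition \ref{HCommutator}, exactly the reflector you construct. The only difference is that you spell out by hand (using Proposition \ref{abelian}, Lemma \ref{lem:commutator} and the minimality clause of Proposition \ref{HCommutator}) the closure and universal-property verifications that the paper delegates to the general citation; these checks are all sound.
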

\begin{proof}
It is well known that the abelian objects in a semi-abelian category form a Birkhoff subcategory, and that the reflector of an object $A$ is given by the quotient of $A$ by the largest Huq commutator on $A$. Then, the reflector $\mathsf{ab} \colon \CHBR \rightarrow \mathsf{Ab} (\CHBR)$ sends a cocommutative Hopf brace $(A,\cdot,\bullet)$ to the quotient Hopf brace $(\frac{A}{A\cdot[A,A]_{\mathrm{Huq}}^+},\overline{\cdot},\overline{\bullet})$, where $[A,A]_{\mathrm{Huq}}$ is described as in Proposition \ref{HCommutator}.
\end{proof}

\begin{remark}
The normal sub-Hopf brace $(A*A,\cdot,\bullet)$ of a cocommutative Hopf brace $(A,\cdot,\bullet)$, where 
\[
A*A:=\langle\{ S(x_{1})\cdot(x_{2}\bullet y_{1})\cdot S(y_{2})\ |\ x,y\in A\}\rangle_{N}
\]
allows one to define the left adjoint $F:\CHBR\to\mathsf{Hopf}_{\Bbbk,\mathrm{coc}}$ to the functor $G:\mathsf{Hopf}_{\Bbbk,\mathrm{coc}}\to\CHBR$ which sends a cocommutative Hopf algebra $A^{\cdot}$ into the trivial cocommutative Hopf brace $(A,\cdot,\cdot)$. Indeed, by taking the quotient Hopf brace $F((A,\cdot,\bullet)) :=(\frac{A}{A\cdot(A*A)^{+}},\overline{\cdot},\overline{\bullet})$ one gets the universal trivial cocommutative Hopf brace associated with $(A,\cdot,\bullet)$. Hence, the subcategory of trivial Hopf braces, which is indeed isomorphic to $\mathsf{Hopf}_{\Bbbk, \mathrm{coc}}$, is a Birkhoff subcategory of $\CHBR$.
\end{remark}

In a semi-abelian category, the Huq commutator is known to satisfy several nice properties, as for instance the symmetry $[X,Y]_{\mathsf{Huq}}= [Y,X]_{\mathsf{Huq}},$
or the preservation under direct images: when $f \colon A \rightarrow B$ is a regular epimorphism, then $$f[X,Y]_{\mathsf{Huq}}= [f(X),f(Y)]_{\mathsf{Huq}}.$$
These and other properties of the Huq commutator \cite{Huq, EVdL} could be useful to investigate nilpotent and solvable Hopf braces. These natural developments will be considered in a subsequent work.
\medskip

\noindent\textbf{Acknowledgments}. The authors would like to thank A.L. Agore and A. Chirvăsitu very much for communicating the  explicit construction of binary coproducts in $\CHBR$, whose existence is needed to conclude that $\CHBR$ is semi-abelian. This research was supported by the Fonds de la Recherche Scientifique – FNRS under Grant CDR No.
J.0080.23. This paper was written while Andrea Sciandra was member of the “National Group for Algebraic and Geometric Structures and their Applications” (GNSAGA-INdAM). He was also partially supported by the project funded by the European Union -NextGenerationEU under NRRP, Mission 4 Component 2 CUP D53D23005960006 - Call PRIN 2022 No. 104 of February 2, 2022 of Italian Ministry of University and Research; Project 2022S97PMY Structures for Quivers, Algebras and Representations (SQUARE).

\end{document}